\newtheorem{theorem}{Theorem}
\newtheorem{proposition}{Proposition}
\newtheorem{lemma}[proposition]{Lemma}
\theoremstyle{remark}
\newtheorem{remark}[proposition]{Remark}
\DeclarePairedDelimiter{\norm}{\lVert}{\rVert}
\DeclarePairedDelimiter{\abs}{\lvert}{\rvert}
\newcommand{\psld}[2]{\left( #1,#2 \right)_{2}}
\newcommand{\dual}[2]{\left\langle #1,#2 \right\rangle}
\newcommand{\eps}{\varepsilon}
\newcommand{\om}{\omega}
\newcommand{\tildo}{\tilde{\omega}}
\newcommand{\tildg}{\tilde{\gamma}}
\newcommand{\tildx}{\tilde{x}}
\newcommand{\tildR}{\tilde{R}}
\newcommand{\N}{\mathbb{N}}
\newcommand{\R}{\mathbb{R}}
\newcommand{\calC}{\mathcal{C}}
\newcommand{\calO}{\mathcal{O}}
\newcommand{\calE}{\mathcal{E}}
\newcommand{\calP}{\mathcal{P}}
\newcommand{\calN}{\mathcal{N}}
\newcommand{\calPlocu}{\mathcal{P}_{\textrm{loc}}^{1}}
\newcommand{\calPlocd}{\mathcal{P}_{\textrm{loc}}^{2}}
\newcommand{\calPlocj}{\mathcal{P}_{\textrm{loc}}^{j}}
\newcommand{\hu}{H^1(\R)}
\newcommand{\hhu}{H^1\times H^1}
\renewcommand{\leq}{\leqslant}
\renewcommand{\geq}{\geqslant}
\DeclareMathAlphabet{\mathpzc}{OT1}{pzc}{m}{it}
\renewcommand{\Re}{\mathcal R\!\mathpzc{e}}
\renewcommand{\Im}{\mathcal I\!\mathpzc{m}}
\begin{document}

\title[Multi-speed solitary waves of nonlinear Schr\"odinger
systems]{Multi-speed solitary waves\\ of nonlinear Schr\"odinger
  systems: \\theoretical and numerical analysis}

\author[F.~Delebecque]{Fanny Delebecque}
\thanks{The work of F. D.  is 
  partially supported by PHC AMADEUS
  2014 31471ZK}

\author[S.~Le Coz]{Stefan Le Coz}
\thanks{The work of S. L. C. is 
  partially supported by ANR-11-LABX-0040-CIMI within the
  program ANR-11-IDEX-0002-02,  ANR-14-CE25-0009-01 and PHC AMADEUS
  2014 31471ZK}

\address[Fanny Delebecque and Stefan Le Coz]{Institut de Math\'ematiques de Toulouse,
  Universit\'e Paul Sabatier,
  118 route de Narbonne, 31062 Toulouse Cedex 9,
  France}
\email[Fanny Delebecque]{fanny.delebecque@math.univ-toulouse.fr}
\email[Stefan Le Coz]{slecoz@math.univ-toulouse.fr}

\author[R. M.~Weish\"aupl]{Rada M. Weish\"aupl}

\address[Rada M. Weish\"aupl]{
  Faculty of Mathematics, University of Vienna, 
  Oskar-Morgenstern-Platz 1, 
  1090 Vienna, Austria}
\email[Rada M. Weish\"aupl]{rada.weishaeupl@univie.ac.at}

\thanks{
  The work of R.M.W. is supported by the  FWF
  Hertha-Firnberg Program, Grant T402-N13 and  Austrian-French Project
WTZ-Amad\'ee FR 18/2014}
\subjclass[2010]{35Q55(35C08,35Q51,37K40)}

\date{\today}
\keywords{solitons, solitary waves,  nonlinear Schr\"odinger systems}

\begin{abstract}
  We consider a system of coupled nonlinear Schr\"odinger equations in one space dimension. First, we prove the existence of multi-speed solitary waves, {\it i.e} solutions to the system with each component behaving at large times as a solitary wave. Then, we investigate numerically the interaction of two solitary waves supported each on one component. Among the possible outcomes, we find elastic and inelastic interactions, collision with mass extraction and reflexion. 
\end{abstract}

\maketitle

\tableofcontents

\section{Introduction}

We consider the following nonlinear Schr\"odinger system:
\begin{equation}\label{eq:nls}\tag{NLS}
  \left
    \{
    \begin{aligned}
      i\partial_t u_1+\partial_{xx} u_1+\mu_1|u_1|^2u_1+\beta|u_2|^2u_1&=0,\\
      i\partial_t u_2+\partial_{xx} u_2+\mu_2|u_2|^2u_2+\beta|u_1|^2u_2&=0,
    \end{aligned}
  \right.
\end{equation}
where  for $j=1,2$ we have $u_j:\mathbb R\times\mathbb
R\rightarrow\mathbb C$, $\mu_j>0$, and $\beta\in\R\setminus\{0\}$. 

When $\mu_{1}=\mu_{2}=\beta$, system~\eqref{eq:nls}, also called \emph{Manakov system} has been introduced by Manakov (see~\cite{Ma74} for example) as an asymptotic model for the propagation of electric fields in waveguides. In this particular case, it is to be noticed that the usual roles of $x$ and $t$ are inverted to study the evolution of the electrical field along the propagation axis.

It has also been used later on to model the evolution of light in optical fiber links. One of the main limiting effects of transmission in optical fiber links is due to the polarization mode dispersion (PMD). It can be explained by the \emph{birefringence effect}, i.e the fact that the electric field is a vector field and that the refraction index of the medium depends on the polarization state (see e.g~\cite{AbPrTr04,Ab07}). The evolution of two polarized modes of an electrical field in a birefringent optical fiber link can indeed be modeled by~\eqref{eq:nls} in the case where $\mu_{1}=\mu_{2}$ and $\beta$ measures the strength of the cross phase modulation which depends of the fiber (see~\cite{Ma74}). Randomly varying birefringence is studied adding random coefficients in both nonlinearity and coupling terms of~\eqref{eq:nls} (see for example~\cite{GAMA06})

In higher dimensions, systems of nonlinear coupled Schr\"odinger equations appears in various physical situations such as the modeling of the interaction of two Bose-Einstein condensates 
in different spin states.

Systems of type~\eqref{eq:nls} have also been studied from the mathematical
point of view. When $\mu_1=\mu_2=\beta$, in dimension 1, the system~\eqref{eq:nls} has the particularity to be completely integrable. Hence explicit calculations of solutions are
possible and one can exhibit a variety of ``truly'' nonlinear 
solutions like solitons, multi-solitons or breathers (see e.g. the book
\cite{AbPrTr04}). The integrability property is however not robust, and the slightest change in the parameters $\mu_1$, $\mu_2$ and
$\beta$ destroys it. Many works (see, among many others,
\cite{AmCo07,BaWa06,deLo08,Si07}) have been devoted to the study of the stationary
version of~\eqref{eq:nls}
\begin{equation}
  \label{eq:snls}
  \left
    \{
    \begin{aligned}
      \partial_{xx} \phi_1+\mu_1|\phi_1|^2\phi_1+\beta|\phi_2|^2\phi_1=\omega_1\phi_1,\\
      \partial_{xx} \phi_2+\mu_2|\phi_2|^2\phi_2+\beta|\phi_1|^2\phi_2=\omega_2\phi_2,
    \end{aligned}
  \right.
\end{equation}
that one obtains when looking for standing
waves solutions 
\[
(u_1,u_2)(t,x)=(e^{i\omega_1 t}\phi_1(x),e^{i\omega_2
  t}\phi_2(x)).
\]
When standing waves exist, it is natural to study their stability and
again many works have been devoted to this problem (see, again among
many others,~\cite{Ha12,MaMoPe10,MoPeSq10,Oh96}). 
Existence and stability of standing waves are often proved using
variational techniques. The analysis of \eqref{eq:snls} through
variational techniques is very subtle and the introduction of 
new ideas is necessary to understand the full picture (see \cite{NgWa15}). 
Note that
\eqref{eq:nls} is Galilean-invariant. Hence a Galilean
transform modifies a standing wave into a solitary wave traveling at
some non-zero speed. 

Our goal in this paper is to provide a new point of view on the study
of this system. We aim at understanding better the behavior in large times of solutions
starting at initial time as two scalar solitary waves carried by the
two different components. We will use a mixture of theoretical and
numerical tools, a combination seldom seen when dealing with this kind
of problems.

First, we propose to push further a study initiated in
\cite{IaLe14} on the multi-speed solitary waves of system~\eqref{eq:nls} and followed up for a different nonlinearity in~\cite{WaCu15}. A
multi-speed solitary wave is a solution of~\eqref{eq:nls} which behaves at
large time as two solitary waves. Here and as in~\cite{IaLe14}, we restrict ourselves to the
case where the composing solitary waves are each carried on only one component of the
system. In other words, taken independently, each component behaves as a
scalar solitary wave at large time. Our first aim is to remove the high speed
assumption under which the main result in~\cite{IaLe14} was proved. We therefore consider the system~\eqref{eq:nls} in dimension $1$ and benefit from the fact that scalar solitary waves are in that case orbitally stable (see e.g~\cite{CaLi82}). 

Our next aim is to investigate further the properties of
multi-speed solitary waves solutions when they are crossing
at positive time. Our theoretical result (Theorem~\ref{theorem}) indeed
only guarantees existence of multi-speed solitary wave solutions to~\eqref{eq:nls} when no
interaction can occur at large time between the composing waves. 
But what happens when two solitary waves carried by different components collide?
Due to the possible complexity of the phenomenon and the lack of
appropriate  theoretical tools to study it, we proceed the following
numerical experiment.  We take as initial data  solitons on each components, both
away from $0$ but facing each other for the direction of
propagation. Among the possible outcomes, we find elastic and inelastic interactions, interaction with mass exctraction, and reflexion.

\subsection{The theoretical result}

Before stating our main theoretical result, let us give a few preliminaries. 

Let $Q_{\omega}\in\hu$ be the unique positive radial ground state  solution to 
\begin{equation}
  \label{eq:Q-omega}
  -\partial_{xx} Q_{\omega}+\omega Q_{\omega}-|Q_{\omega}|^2Q_{\omega}=0,\qquad Q_{\omega}>0,\quad Q_{\omega} \in H^1_{\mathrm{rad}}(\R).
\end{equation}
\noindent
From simple calculations we note that the following scaling occurs: 
\begin{equation}\label{eq:snls-basic-intro}
  Q_{\omega}(x):=\sqrt{\omega}\ Q_{1}(\sqrt{\omega} x).
\end{equation}
For $j=1,2$, consider $\omega_j>0,\gamma_j\in\R$, $x_j,v_j\in \R$ and define  
\begin{equation}
  \label{eq:soliton-j}
  R_j(t,x)
  =  e^{i(\omega_j t-\frac{1}{4}|v_j|^2t+\frac{1}{2}v_j\cdot
    x+\gamma_j)}\sqrt{\frac{1}{\mu_j}} Q_{
    \omega_{j}}(x-v_jt-x_j).
\end{equation}
The function $R_{j}$ is   a solitary wave solution to
\begin{equation}
  \label{eq:nls-j}
  i\partial_{t}u+\partial_{xx}u+\mu_{j}\abs{u}^2u=0.
\end{equation}  

In this paper, we want to investigate the existence of solutions to
\eqref{eq:nls} where each component behaves like a solitary wave $R_j$
solution to the scalar equation~\eqref{eq:nls-j}. Our main
theoretical result is the following. 

\begin{theorem}
  \label{theorem}
  Let $\mu_{1},\mu_{2}>0$ and $\beta\in \mathbb{R}\setminus\{0\}$. For
  $j=1,2$, take $v_{j}, x_{j}, \gamma_{j}\in\mathbb{R}$,
  $\omega_{j}>0$ and
  consider the ground state profile  $Q_{\omega_j}$ solution to~\eqref{eq:Q-omega} and the soliton $R_j$ defined  in
~\eqref{eq:soliton-j}. 
  Then, there exist $C>0$, $T_{0}>0$ and
  $\begin{pmatrix}u_1,u_2\end{pmatrix}$ solution to~\eqref{eq:nls} on
  the time interval $[T_{0},+\infty)$ such that  for all $t\in
  [T_{0},+\infty)$, we have
  \begin{equation*}
    \norm*{\begin{pmatrix}u_1(t)\\u_2(t)\end{pmatrix}-\begin{pmatrix}R_1(t)\\R_2(t)\end{pmatrix}
    }_{H^1\times H^1}\leq C e^{-\sqrt{\omega_{*}}v_{*}t},
  \end{equation*}
  where $\omega_{*}=\frac{1}{2304}\min\{\omega_{1},\omega_{2}\}$ and $v_{*}=\abs{v_{1}-v_{2}}$.
\end{theorem}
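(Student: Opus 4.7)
The plan is to follow the backward construction strategy standard for multi-soliton problems (as in Martel--Merle and in particular the predecessor paper \cite{IaLe14}), but replace the high-speed assumption by a bootstrap that exploits the orbital stability of the $1$D cubic scalar soliton. More precisely, I would pick an increasing sequence $T_{n}\to +\infty$ and, for each $n$, solve \eqref{eq:nls} backwards on $[T_{0},T_{n}]$ with final data
\[
(u_{1}^{n},u_{2}^{n})(T_{n})=(R_{1}(T_{n}),R_{2}(T_{n})).
\]
The goal is then to prove a uniform estimate
\[
\norm*{(u_{1}^{n},u_{2}^{n})(t)-(R_{1},R_{2})(t)}_{\hhu}\leq C e^{-\sqrt{\omega_{*}}v_{*} t}\quad\text{for all }t\in[T_{0},T_{n}],
\]
with $C,T_{0}$ independent of $n$, then extract a weak-$\hhu$ limit at $t=T_{0}$ and solve forward by local well-posedness, using continuous dependence to propagate the estimate to $[T_{0},+\infty)$.

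The core technical step is the uniform estimate, which I would obtain by a modulation plus bootstrap argument. At each time I would write $u_{j}^{n}(t)=\tilde R_{j}(t)+\eta_{j}^{n}(t)$, where $\tilde R_{j}$ is the soliton profile \eqref{eq:soliton-j} with modulated parameters $(\tildo_{j},\tildg_{j},\tildx_{j},\tilde v_{j})(t)$ chosen so that $\eta_{j}^{n}$ is orthogonal, in the $L^{2}$ sense, to the four symmetry directions $\partial_{\omega}\tilde R_{j}$, $i\tilde R_{j}$, $\partial_{x}\tilde R_{j}$, $ix\tilde R_{j}$. The standard implicit function theorem gives this decomposition provided $\eta^{n}$ stays small in $\hhu$. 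I would then run a bootstrap: assume $\norm{\eta^{n}(t)}_{\hhu}\leq 2Ce^{-\sqrt{\omega_{*}}v_{*}t}$ on some sub-interval $[\tau,T_{n}]$ and improve it to a $C$ bound. The scalar coercivity of the linearized action around $Q_{\omega_{j}}/\sqrt{\mu_{j}}$, combined with the conservation laws for \eqref{eq:nls} (mass of each component and total energy), controls $\norm{\eta_{j}^{n}}_{H^{1}}^{2}$ by the size of the source terms plus modulation drifts, all modulo the orthogonality directions which are themselves controlled by conservation of mass and momentum.

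The decisive mechanism is that the cross-coupling terms $\beta\abs{\tilde R_{3-j}}^{2}\tilde R_{j}$ and the products $\abs{\tilde R_{1}}^{2}\abs{\tilde R_{2}}^{2}$ appearing in the energy are exponentially small in $t$: the profiles $Q_{\omega_{j}}$ decay like $e^{-\sqrt{\omega_{j}}\abs{x}}$, the centers $\tildx_{j}+\tilde v_{j}t$ separate at rate $v_{*}=\abs{v_{1}-v_{2}}$, and elementary integration yields bounds of order $e^{-\sqrt{\omega_{*}}v_{*}t}$ on the overlap, with the numerical factor $1/2304$ absorbing the Young/Cauchy--Schwarz constants that appear when converting these pointwise bounds into $\hhu$ estimates for the modulation ODEs. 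Plugging these source estimates into the differential inequalities for $\tfrac{d}{dt}\norm{\eta^{n}}_{\hhu}^{2}$ and for $\tfrac{d}{dt}(\tildo_{j},\tildg_{j},\tildx_{j},\tilde v_{j})$ and integrating backward from $t=T_{n}$, where $\eta^{n}(T_{n})=0$, gives the required improved bound.

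The main obstacle I anticipate is the closing of the bootstrap through the coercivity estimate: one must show that the quadratic form obtained by linearizing the \eqref{eq:nls} energy at $(\tilde R_{1},\tilde R_{2})$, restricted to the $L^{2}$-orthogonal of the eight symmetry directions, is coercive uniformly in $t$, despite the off-diagonal coupling $\beta\abs{\tilde R_{1}}^{2}\abs{\tilde R_{2}}^{2}$ and the modulation drift. Because that coupling is itself exponentially small, one expects the full quadratic form to inherit coercivity from the diagonal (scalar) coercivity with an error that is absorbable at the cost of taking $T_{0}$ large, which is exactly the flexibility we need. Once this is established, extraction of a subsequence $(u_{1}^{n},u_{2}^{n})(T_{0})\rightharpoonup(u_{1},u_{2})(T_{0})$ in $\hhu$ and local well-posedness finish the proof.
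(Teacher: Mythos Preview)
Your overall architecture---backward approximations from final data $(R_1,R_2)(T_n)$, modulation, bootstrap, compactness---matches the paper's. The genuine gap is in your coercivity step. You assert that the coupling in the Hessian is exponentially small because it is carried by $\beta|\tilde R_1|^2|\tilde R_2|^2$. At the quadratic-in-$\eta$ level this is false: expanding $-\tfrac{\beta}{2}\int|u_1|^2|u_2|^2\,dx$ at $(\tilde R_1,\tilde R_2)$ produces the cross terms
\[
-\frac{\beta}{2}\int_{\R}|\eta_1|^2|\tilde R_2|^2\,dx
\qquad\text{and}\qquad
-\frac{\beta}{2}\int_{\R}|\eta_2|^2|\tilde R_1|^2\,dx,
\]
each pairing a single soliton profile with the error of the \emph{other} component. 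Since $\eta_1$ carries no a priori spatial localization---it may well concentrate near the center of $\tilde R_2$---these terms are of size $O(\norm{\eta}_{L^2}^2)$, not $O(e^{-\sqrt{\omega_*}v_*t})$, and for $\beta>0$ they destroy the diagonal scalar coercivity no matter how large $T_0$ is taken. The paper's remedy is to add to the action the extra terms $C_1(\beta,v_1)M(u_2)+C_2(\beta,v_2)M(u_1)$ with constants chosen large enough to dominate these cross terms; both masses are exactly conserved, so almost-conservation of the action is unaffected. The paper explicitly flags this device as a new feature beyond \cite{IaLe14}.

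A second omission, tied to the first, concerns momentum. Each scalar soliton is a critical point of $E(\cdot,\mu_j)+(\omega_j+v_j^2/4)M+v_jP$, so the Lyapunov functional must contain $v_jP(u_j)$ with a component-dependent coefficient; but only the \emph{total} momentum $P(u_1)+P(u_2)$ is conserved by \eqref{eq:nls}, not the individual $P(u_j)$. The paper therefore replaces $P(u_j)$ by momenta $\calPlocj$ localized via cut-offs around each soliton and proves these are almost conserved. That localization in turn generates further non-small quadratic cross terms of the form $\tfrac{v_j}{2}\Im\int\eta_{3-j}\overline{\partial_x\eta_{3-j}}\chi_L^j\,dx$, which are again absorbed by the added mass constants. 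Without both devices---localized momenta and the extra mass terms---your bootstrap will not close for arbitrary $\beta$ and small relative speed $v_*$.
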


\begin{remark}
  Compare to~\cite[Theorem 1]{IaLe14}, the main differences are the
  following. Our result is valid for any speeds, whereas the one in
~\cite{IaLe14} required a high speed assumption. We restrict
  ourselves to dimension $1$ to have stable solitons (in
~\cite{IaLe14}, any dimension was allowed). The overall proof strategy is
  similar, but in our case we need to
  perform several technical refinements which include in particular
  working with localized
  momenta and modulated waves.
  
  In addition,  we are introducing
  the technical artefact consisting into introducing arbitrary
  constants in the definition~\eqref{eq:Stilde} of the global
  action. This is a new feature for this type of analysis, which is
  quite surprising as usually such a flexibility is not allowed by the
  algebra of the problem. 
\end{remark}

The scheme of the proof is inspired by the one developed
for the study of multi-solitons in
scalar nonlinear Schr\"odinger equations in
\cite{CoLe11,CoMaMe11,MaMe06,Me90} (see also~\cite{BeGhLe14} for a
similar approach applied to Klein-Gordon equations). It consists in solving
\eqref{eq:nls} backward in time, taking as final data a couple of
solitary waves 
$\begin{pmatrix}R_1(T^{n}),R_2(T^{n})\end{pmatrix}$, for an increasing
sequence of times $T^{n}\rightarrow +\infty$. Thus we get a sequence
$\begin{pmatrix}u^n_1,u^n_2\end{pmatrix}$ of solutions to
\eqref{eq:nls} on a time interval $(-\infty,T^n]$ such that
$\begin{pmatrix}u^n_1(T^n),u^n_2(T^n)\end{pmatrix}=\begin{pmatrix}R_1(T^n),R_2(T^n)\end{pmatrix}$. We
then have to prove the existence of a time $T_{0}$, independent of $n$
such that for $n$ large enough,
$\begin{pmatrix}u^n_1,u^n_2\end{pmatrix}$ is close to
$\begin{pmatrix}R_1,R_2\end{pmatrix}$ on $[T_{0},T^n]$. The key tools at hand to prove Theorem~\ref{theorem} are 
\begin{itemize}
\item uniform in $n$ estimates
  \[
  \forall t\in [T_{0},T^n],\
  \left\|\begin{pmatrix}u^n_1(t)\\u^n_2(t)\end{pmatrix}- \begin{pmatrix}R_{1}(t)\\R_{2}(t)\end{pmatrix}\right\|_{H^1\times
    H^1}\leq C  e^{-\sqrt{\omega_{*}}v_{*}t},
  \]
\item a compactness argument that gives the existence of $\begin{pmatrix}u_{1}^0,u_{2}^0\end{pmatrix} \in H^1(\R)\times H^1(\R)$ such that $\begin{pmatrix}u_{1}^n,u_{2}^n\end{pmatrix}$ converges strongly in $H^s(\R)$ ($s\in [0,1)$) towards $\begin{pmatrix}u_{1}^0,u_{2}^0\end{pmatrix}$.
\end{itemize}

\begin{remark}
  The method used to obtain Theorem~\ref{theorem} is a powerful tool
  to obtain sharp existence results for multi-solitons composed of
  ground states. Another approach relying on a fixed point argument has
  been developed for nonlinear Schr\"odinger equations in
~\cite{LeLiTs15,LeTs14}. This approach is very flexible and allows to
  prove existence of solutions more complicated than the ones in
  Theorem~\ref{theorem} like infinite trains of solitons or multi-kinks. The main
  drawback is that it always requires a large speed
  assumption. 
\end{remark}

We also have tested numerically  if the multi-speed solitary wave configuration was
stable provided the starting waves are well-ordered and
well-separated. In other words, we took the interaction to be small
at the origin and the composing waves going away from each other.
With such a well-prepared initial
configuration, we remain close to a similar configuration in large
time. This suggests that the multi-speed solitary waves are stable (no
matter the coupling parameter). Note that this is expected due to
the fact that each wave taken individually is stable. We have
however no theoretical mean to verify this conjecture. Similar
difficulties arise in the analysis of the stability for multi-solitons in nonlinear
scalar Schr\"odinger equations (see e.g.~\cite{MaMeTs06}). 

\subsection{The numerical experiments}

We solve the system~\eqref{eq:nls} in one dimension by adapting the
time-splitting spectral method described in~\cite{BJM03}. 
This method is unconditionally stable, time reversible, of spectral-order accuracy in space and second-order accuracy in time, and it conserves the discrete total mass
\cite{Bao04}. One can refer to~\cite{AnBaBe13} for other possible schemes and their properties. 

We will also compute the real valued ground state (minimizer of the energy on fixed
$L^2$ mass constraints) of the system~\eqref{eq:snls} using a normalized gradient flow approach. This will be used to make the comparison between the outcome of the interaction between two
solitary waves and a solitary wave with profile $(\phi_1,\phi_2)$.

As already mentioned, the experiment consists in taking as initial data the initial data of two solitary waves facing each other, each on one component.

We considered four cases, the first one being the integrable case, where
we expect the solitons after the interaction to move with the same
velocity and amplitude. Apart when $\mu_1=\mu_2=\beta$, the system is
not integrable, hence we do not expect pure elastic interaction
between solitons. However, there are still regimes where we expect the
outcome of interaction between solitons to be also a multi-speeds
solitary wave, different from the input at two level: first, there are
modifications in the speeds and amplitudes of the composing
solitons. Second, there is  a loss of a bit of energy, mass and
momentum into a small dispersive remainder. In certain cases, we have been able to
identify the profiles of the outcome of the interactions as ground
states of the stationary system~\eqref{eq:snls}.

The rest of this paper is organized as follows. In Section
\ref{sec:theorem}, we prove Theorem~\ref{theorem} assuming uniform
estimates. In Section~\ref{sec:uniform}, we prove the uniform estimates. The numerical methods are described in Section
\ref{sec:scheme}, and the numerical experiments are presented in
Section~\ref{sec:experiments}. Appendix~\ref{appendix} contains the proof of a modulation result.

\section{Existence of multi-speed solitary waves}
\label{sec:theorem}

This short section is devoted to the proof of Theorem~\ref{theorem},
assuming uniform estimates proved in the next section. 

In this section and in the next one, we assume that $\mu_1,\mu_2>0$ and
$\beta\in\setminus\{0\}$ are fixed constants and that we are given for
$j=1,2$ soliton parameters $\omega_j,v_j,x_j,\gamma_j\in\R$. Denote by
$Q_{\omega_j}$ and $R_j$ the corresponding profile and soliton.

We make the assumption that 
\begin{equation}
  \label{eq:assumption}
  0<v_1=-v_2.
\end{equation}
Since~\eqref{eq:nls} is Galilean invariant, this
assumption can be done without loss of generality. This will simplify
calculations later on.

Note that it follows from classical arguments (see~\cite{Ca03}) that
the Cauchy problem for~\eqref{eq:nls} is globally well-posed in the
energy space $H^1(\R)\times H^1(\R)$ and also in $L^2(\R)\times
L^2(\R)$. In particular, for any initial data $(u_1^0,u_2^0)\in
H^1(\R)\times H^1(\R)$ there exists a unique global solution
$(u_1,u_2)$ of~\eqref{eq:nls} in $\mathcal C(\R,H^1(\R)\times H^1(\R))\cap \mathcal C^1(\R,H^{-1}(\R)\times H^{-1}(\R))$.

Let $(T^n)$ be an increasing sequence of times such that
$T^n\to+\infty$ as $n\to+\infty$. Let
$(u^n_1,u^n_2)$
be the sequence of solutions to~\eqref{eq:nls} defined by solving
\eqref{eq:nls} backward on  $(-\infty,T^n]$
with final data $(u^n_1,u^n_2)(T^n)=\begin{pmatrix}R_1,R_2\end{pmatrix}(T^n).$
The proof of Theorem~\ref{theorem} then relies on the following two
ingredients.

First, we have uniform estimates on the distance between the sequence
$\begin{pmatrix}u^n_1,u^n_2\end{pmatrix}$ and the multi-speed solitary wave
profile $\begin{pmatrix}R_1,R_2\end{pmatrix}.$
\begin{proposition}
  [Uniform estimates]
  \label{prop:uniform}
  There exist $T_0>0$, $n_0\in\mathbb N$ such that, for all $n\geq
  n_0$ and for all $t\in[T_0,T^n]$ we have 
  \[
  \left\|\begin{pmatrix}u_{1}^n\\u_{2}^n\end{pmatrix}(t)
    -\begin{pmatrix}R_{1}\\R_{2}\end{pmatrix}(t) \right\|_{\hhu}
  \leq   e^{-\sqrt{\omega_{*}}v_{*}t},
  \]
  where, as in Theorem~\ref{theorem}, $\omega_{*}=\frac{1}{2304}\min\{\omega_{1},\omega_{2}\}$ and $v_{*}=\abs{v_{1}-v_{2}}$.

\end{proposition}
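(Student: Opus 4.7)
The plan is to establish Proposition~\ref{prop:uniform} by a bootstrap argument of Martel--Merle type. For fixed $n$, set
\[
T^*_n := \inf\Bigl\{\tau \in [T_0, T^n] : \bigl\|(u^n_1,u^n_2)(t)-(R_1,R_2)(t)\bigr\|_{\hhu} \leq e^{-\sqrt{\om_*}v_* t}\ \forall\, t\in[\tau,T^n]\Bigr\}.
\]
Since the difference vanishes at $t=T^n$, continuity of the flow ensures $T^*_n<T^n$. The goal is to show $T^*_n=T_0$ for appropriate $T_0$ and $n_0$, which is achieved by strictly improving the bootstrap bound on $[T^*_n, T^n]$.

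The first ingredient is a modulation result (deferred to the appendix). On $[T^*_n, T^n]$, the implicit function theorem lets one replace the fixed parameters in $R_j$ by $\calC^1$ functions $\tildo_j(t), \tildg_j(t), \tildx_j(t)$, with the velocities $v_j$ kept fixed; writing $\tildR_j$ for the soliton with the modulated parameters and $\eps_j := u^n_j - \tildR_j$, the parameters are chosen so that $\eps_j$ is orthogonal to the three infinitesimal symmetry directions of the scalar equation at $\tildR_j$. The modulation ODEs then yield
\[
|\tildo_j'(t)| + |\tildg_j'(t)| + |\tildx_j'(t) - v_j| \lesssim \|\eps(t)\|_{\hhu}^2 + e^{-\sqrt{\om_*}v_* t}\|\eps(t)\|_{\hhu}.
\]

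The core is a Lyapunov functional
\begin{equation}\label{eq:Stilde}
\tilde S(t) := E(u^n_1,u^n_2) + \sum_{j=1}^2 \bigl(\tildo_j(t)+c_j\bigr)\, M(u^n_j) + \sum_{j=1}^2 \tfrac12\, v_j\, \calPlocj(u^n_j),
\end{equation}
where $E$ is the conserved energy of~\eqref{eq:nls}, $M(u) = \tfrac12\|u\|_{L^2}^2$, $\calPlocj$ is the momentum of the $j$-th component cut off by a smooth indicator of the half-line around the trajectory $v_j t + x_j$, and the arbitrary constants $c_j$ provide the algebraic room highlighted in the remark, used to cancel otherwise unmanageable cross-terms. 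Two estimates are needed. First, a coercivity bound
\[
\tilde S(t) - \tilde S(T^n) \geq \kappa\, \|\eps(t)\|_{\hhu}^2 - C\,\|\eps(t)\|_{\hhu}^3 - C\, e^{-2\sqrt{\om_*}v_* t},
\]
obtained by Taylor-expanding $\tilde S$ at $(\tildR_1,\tildR_2)$ and using the orthogonality conditions to apply, on each component, the standard spectral property of the scalar linearized action; this is where orbital stability of the 1D cubic ground state (equivalently, $\tfrac{d}{d\om}\|Q_\om\|_{L^2}^2>0$) enters and removes the high-speed hypothesis of~\cite{IaLe14}. Second, a time-derivative bound
\[
\Bigl|\tfrac{d}{dt}\tilde S(t)\Bigr| \lesssim e^{-2\sqrt{\om_*}v_* t} + \bigl(|\tildo_1'|+|\tildo_2'|\bigr)\|\eps(t)\|_{\hhu}^2,
\]
where the exponential comes from (i) the coupling term $\beta\int|u^n_1|^2|u^n_2|^2\,dx$ in $E$ and (ii) the errors introduced by the localization of $\calPlocj$; both reduce to products of functions concentrated near the two separating trajectories $v_j t$, whose $L^p$-norms decay like $e^{-\sqrt{\om_*}v_* t}$ since the separation is $v_* t$.

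Integrating the differential inequality from $t$ to $T^n$, using the bootstrap to dominate $\|\eps\|_{\hhu}$, and inserting the result into the coercivity estimate yields $\|\eps(t)\|_{\hhu} \leq C e^{-\sqrt{\om'_*}v_* t}$ for some $\om'_*>\om_*$ and a constant $C$ independent of $n$. Passing back from $(\tildR_1,\tildR_2)$ to $(R_1,R_2)$ via the modulation bounds, and choosing $T_0$ large enough to absorb the constant $C$ into the gap between $\om'_*$ and $\om_*$, strictly improves the bootstrap and closes the argument. The main obstacle, and the source of the technical refinements announced in the remark, is the combined use of localized momenta with the modulation of $\tildo_j$: this breaks the usual algebraic identities satisfied by the plain momentum, and it is precisely to repair them that the arbitrary constants $c_j$ must be inserted in~\eqref{eq:Stilde}.
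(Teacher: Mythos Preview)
Your overall architecture---bootstrap, modulation with three orthogonality conditions per component, a Lyapunov functional built from energy, masses and localized momenta, coercivity plus almost-conservation---matches the paper. But two of your choices differ from the paper's in a way that prevents the bootstrap from closing.

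\textbf{The localized momentum must be taken on the full vector, not component-wise.} You define $\calPlocj$ as the momentum of $u^n_j$ alone, cut off near the $j$-th trajectory. The scalar momentum $P(u_j)$ is \emph{not} conserved by~\eqref{eq:nls}: its time derivative contains the coupling term $\tfrac{\beta}{2}\int |u_{3-j}|^2\,\partial_x(|u_j|^2)\,dx$, which after expansion around $(\tildR_1,\tildR_2)$ leaves a contribution of size $O(\|\eps\|^2)$ carrying \emph{no} small prefactor. Integrating this from $t$ to $T^n$ under the bootstrap gives $C\,e^{-2\sqrt{\omega_*}v_* t}$ with a fixed constant $C$, and the coercivity step then yields $\|\eps(t)\|^2\le C'\,e^{-2\sqrt{\omega_*}v_* t}$ with $C'$ determined by $\kappa$, $\beta$, $\omega_j$, $v_j$---there is no room to make $C'\le\tfrac14$, and no exponent gap $\omega'_*>\omega_*$ is ever produced. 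The paper instead localizes the \emph{total} momentum density $\tfrac12\Im(u_1\overline{\partial_x u_1}+u_2\overline{\partial_x u_2})$ by a cut-off $\chi(x/L)$ independent of $t$ (using $v_1=-v_2$). Because the total momentum is conserved, the $\beta$-terms from the two components combine into $\partial_x(|u_1|^2|u_2|^2)$ and, after integration by parts against $\chi(x/L)$, acquire a factor $1/L$. The whole time derivative is then $\frac{C}{L}\|\eps\|^2+O(e^{-3\sqrt{\omega_*}v_* t})$, and the bootstrap closes by choosing $L$ large---not $T_0$ large.

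\textbf{The role and placement of the arbitrary constants.} In the paper the extra mass terms are cross terms $C_1(\beta,v_1)\,M(u_2)+C_2(\beta,v_2)\,M(u_1)$, not $c_j M(u^n_j)$. Their purpose is purely at the level of the quadratic form $\mathcal H$: the vector localized momentum produces, at second order, cross pieces $\tfrac{v_{3-j}}{2}\Im\!\int \eps_j\overline{\partial_x\eps_j}\,\chi^{3-j}_L\,dx$ (component $j$ tested against the ``wrong'' cut-off), and the energy produces $-\tfrac{\beta}{2}\int|\eps_j|^2|\tildR_{3-j}|^2\,dx$. Both are absorbed by taking $C_j$ large; this is how $\mathcal H_{\mathrm{coupled}}$ is shown to satisfy~\eqref{Hcoupled-coercivity}. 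They do not repair identities broken by the modulation of $\tildo_j$.

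Two minor points: the modulation estimate~\eqref{eq:estmod} gives only $|\partial_t\tildo_j|\lesssim\|\eps\|+e^{-\frac32\sqrt{\omega_*}v_* t}$ (linear, not quadratic, in $\|\eps\|$); the quadratic control is on $|\tildo_j(t)-\omega_j|$, obtained separately from mass conservation and the orthogonality $(\eps_j,\tildR_j)_2=0$ (Lemma~\ref{variation-omega1}). And the action must carry the Galilean shift $\tildo_j+\tfrac{v_j^2}{4}$ in front of $M_j$, not just $\tildo_j$.
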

The proof of Proposition~\ref{prop:uniform} is rather involved and we
postpone it to Section~\ref{sec:uniform}.

The next ingredient is a compactness result on the initial data
$\begin{pmatrix}u_{1}^n,u_{2}^n\end{pmatrix}(T_0)$. This result was
already present in this form in~\cite{IaLe14} and we recall it without proof.
\begin{proposition}
  [Compactness]
  There exists $\begin{pmatrix}u_{1}^0,u_{2}^0\end{pmatrix}\in
  H^1(\R)\times H^1(\R)$  such that, up to a subsequence,
  $\begin{pmatrix}u_{1}^n,u_{2}^n\end{pmatrix}(T_0)$ converges strongly towards $\begin{pmatrix}u_{1}^0,u_{2}^0\end{pmatrix}$ in
  $H^s(\R)\times H^s(\R)$ for all $s\in [0,1)$.
\end{proposition}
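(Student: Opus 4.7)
The plan is to run the standard scheme: the uniform $H^1$-bound gives weak compactness in $\hhu$; $L^2$-tightness coming from the exponential decay of the ground state upgrades this to strong $L^2$-convergence via Rellich-Kondrachov; and Sobolev interpolation against the uniform $H^1$-bound then promotes the $L^2$-convergence to the claimed $H^s$-convergence for every $s\in[0,1)$.

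First, I would set up the weak limit. Proposition~\ref{prop:uniform} combined with the triangle inequality gives
\[
\norm*{\begin{pmatrix}u_1^n\\u_2^n\end{pmatrix}(T_0)}_{\hhu}\le e^{-\sqrt{\omega_{*}}v_{*}T_0}+\norm*{\begin{pmatrix}R_1\\R_2\end{pmatrix}(T_0)}_{\hhu},
\]
so the sequence is bounded in $\hhu$ uniformly in $n\ge n_0$. Banach-Alaoglu then produces, up to extraction of a subsequence (not relabeled), a weak limit $(u_1^0,u_2^0)\in\hhu$ with $u_j^n(T_0)\rightharpoonup u_j^0$ weakly in $\hu$ for $j=1,2$.

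Next, I would establish tightness in $L^2(\R)$ via the soliton decomposition $u_j^n(T_0)=R_j(T_0)+(u_j^n(T_0)-R_j(T_0))$. For every $M>0$,
\[
\norm{u_j^n(T_0)}_{L^2(\abs{x}>M)}\le \norm{R_j(T_0)}_{L^2(\abs{x}>M)}+e^{-\sqrt{\omega_{*}}v_{*}T_0}.
\]
The first term tends to $0$ as $M\to\infty$ thanks to the well-known exponential decay of the one-dimensional ground state $Q_{\omega_j}$ (via the scaling~\eqref{eq:snls-basic-intro}), and the second is already small once $T_0$ is large. Coupling this uniform tightness with the weak $\hu$-convergence and the Rellich-Kondrachov embedding on every bounded interval $[-M,M]$, I obtain strong convergence $u_j^n(T_0)\to u_j^0$ in $L^2(\R)$. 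Finally, the Sobolev interpolation $\norm{f}_{H^s}\le C\,\norm{f}_{L^2}^{1-s}\norm{f}_{H^1}^s$ applied to $f=u_j^n(T_0)-u_j^0$, together with the uniform $H^1$-bound from the first step, promotes this $L^2$-convergence to $H^s$-convergence for every $s\in[0,1)$.

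The only mildly nontrivial step is tightness, but the soliton decomposition reduces it to the exponential decay of the ground state and the quantitative estimate of Proposition~\ref{prop:uniform}; no genuinely new analytic input is needed, which is in keeping with the authors' remark that this proposition was already present in this form in~\cite{IaLe14}.
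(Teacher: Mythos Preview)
Your argument has a genuine gap in the tightness step. From Proposition~\ref{prop:uniform} you correctly obtain
\[
\norm{u_j^n(T_0)}_{L^2(\abs{x}>M)}\le \norm{R_j(T_0)}_{L^2(\abs{x}>M)}+e^{-\sqrt{\omega_{*}}v_{*}T_0},
\]
but the second term on the right is a \emph{fixed positive} number ($T_0$ is fixed once and for all by Proposition~\ref{prop:uniform}); it does not vanish as $M\to\infty$. Hence you only obtain ``tightness up to $\delta:=e^{-\sqrt{\omega_{*}}v_{*}T_0}$'', and running the rest of your scheme yields at best $\limsup_n\norm{u_j^n(T_0)-u_j^0}_{L^2}\le 2\delta>0$, not strong $L^2$-convergence. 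The remainders $u_j^n(T_0)-R_j(T_0)$ are small in $H^1$, but there is no a~priori reason they should be spatially localized uniformly in~$n$.

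The paper gives no proof of its own and defers to~\cite{IaLe14}; the argument there (standard in the multi-soliton construction literature, see also~\cite{CoMaMe11,MaMe06}) closes this gap by establishing a \emph{genuine} uniform $L^2$-localization: for every $\eps>0$ there exists $M_\eps>0$ such that
\[
\sup_{n\ge n_0}\int_{\abs{x}>M_\eps}\abs{u_j^n(T_0,x)}^2\,dx<\eps.
\]
This does not follow from the $H^1$-closeness to $(R_1,R_2)$ alone; it is obtained by propagating the exact localization available at the final time $T^n$ (where $(u_1^n,u_2^n)(T^n)=(R_1,R_2)(T^n)$) backward to $T_0$ via an almost-monotonicity computation for a truncated mass $\int\abs{u_j^n(t,x)}^2\psi(x)\,dx$, very much in the spirit of Lemma~\ref{conservation-momentum} for the localized momentum. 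Once this true tightness is in hand, the remainder of your plan (Rellich--Kondrachov on bounded intervals, then interpolation against the uniform $H^1$-bound) is correct.
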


With these two ingredients in hand, we can now conclude the proof of
Theorem~\ref{theorem}.

\begin{proof}[Proof of Theorem~\ref{theorem}]
  Let $(u_1,u_2)$ be the solution on $\R$ of the Cauchy problem
~\eqref{eq:nls} with initial data $(u_1^0,u_2^0)$ at $t=T_0$. By
  $H^1(\R)\times H^1(\R)$ boundedness and local
  well-posedness of the Cauchy problem in   $H^s(\R)\times H^s(\R)$ for
  all $s\in [0,1)$, we have weak convergence in  $H^1(\R)\times H^1(\R)$
  of $(u_1^n,u_2^n)(t)$ towards $(u_1,u_2)(t)$ for any $t\in\R$. Combined with the
  uniform estimates of Proposition~\ref{prop:uniform}, this implies for
  all $t\in[T_0,+\infty)$ that 
  \[
  \norm*{\begin{pmatrix}u_1\\u_2\end{pmatrix}(t)-\begin{pmatrix}R_1\\R_2\end{pmatrix}(t)}_{H^1\times
    H^1}
  \leq 
  \liminf_{n\to+\infty}
  \norm*{\begin{pmatrix}u_1^n\\u_2^n\end{pmatrix}(t)-\begin{pmatrix}R_1\\R_2\end{pmatrix}(t)}_{H^1\times
    H^1}
  \leq 
  C e^{-\sqrt{\omega_{*}}v_{*}t}.
  \]
  This concludes the proof of Theorem~\ref{theorem}.
\end{proof}

\section{Uniform estimates}
\label{sec:uniform}
This section is devoted to the proof of Proposition
\ref{prop:uniform}. In all this section, $T^n$ and
$\begin{pmatrix}u_{1}^n,u_{2}^n\end{pmatrix}$ are given as in the
beginning of Section~\ref{sec:theorem}.

\subsection{The bootstrap argument} 

We first reduce the proof of Proposition~\ref{prop:uniform} to the
proof of the following bootstrap result. 
\begin{proposition}[Bootstrap argument]
  \label{bootstrap}
  There exist $T_{0}>0$ and $n_{0}\in \N$ such that for all $n\geq n_{0}$
  and  for any $t_{0}\in [T_{0},T^n]$ the following property is
  satisfied. 
  If for all $t\in [t_{0},T^n]$ we have 
  \begin{equation}
    \label{BH1} 
    \left\|\begin{pmatrix}u_{1}^n\\u_{2}^n\end{pmatrix}(t) -\begin{pmatrix}R_{1}\\R_{2}\end{pmatrix}(t) \right\|_{\hhu}\leq   e^{-\sqrt{\omega_{*}}v_{*}t},
  \end{equation}
  then for all $t\in [t_{0},T^n]$ we have 
  \begin{equation}
    \label{BH1/2}
    \left\|\begin{pmatrix}u_{1}^n\\u_{2}^n\end{pmatrix}(t) -\begin{pmatrix}R_{1}\\R_{2}\end{pmatrix}(t) \right\|_{\hhu}\leq  \frac{1}{2} e^{-\sqrt{\omega_{*}}v_{*}t}.
  \end{equation}
\end{proposition}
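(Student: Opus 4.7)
The plan is a bootstrap driven by a modulated soliton decomposition together with an action-type Lyapunov functional adapted to the two-speed configuration. Under the standing hypothesis on $[t_0,T^n]$, one first applies a componentwise implicit function theorem (the modulation result proved in the appendix) to write, for $j=1,2$,
\begin{equation*}
u_j^n(t,x) = \tildR_j(t,x) + \eta_j(t,x),
\end{equation*}
where $\tildR_j$ is a soliton with modulated parameters $\tildo_j(t), \tildg_j(t), \tildx_j(t), \tilde v_j(t)$ close to those prescribed by~\eqref{eq:soliton-j}, and $\eta_j$ satisfies four orthogonality conditions that transversalize the kernel of the linearization of the scalar equation at $Q_{\tildo_j}$. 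Differentiating these conditions in time yields a system of ODEs for the modulation parameters whose right-hand sides are quadratic in $\eta$ plus coupling contributions of the form $\int |u_1^n|^2|u_2^n|^2\,dx$; thanks to the exponential decay of $Q_{\omega_j}$ and to the linear separation $|v_1-v_2|t$ of the soliton centers, these are bounded by $e^{-c\sqrt{\omega_{*}}v_{*}t}$.

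Second, I would build a global action. Let $\psi\in C^\infty(\R)$ be a smooth cutoff partitioning space into two regions centered on each soliton trajectory; it produces localized momenta $\calPlocu,\calPlocd$ that replace the global momentum, whose value at a pair of solitons moving at distinct speeds would spoil the algebra. The action takes the schematic form
\begin{equation*}
\widetilde{\mathcal{S}}(t) = \calE(u_1^n,u_2^n) + \sum_{j=1,2}\bigl(\tildo_j(t)+a_j\bigr)\calN(u_j^n) - \sum_{j=1,2}\bigl(\tilde v_j(t)+b_j\bigr)\calPlocj(u_1^n,u_2^n),
\end{equation*}
where $a_j, b_j\in\R$ are arbitrary constants (the ``technical artefact'' mentioned in the remark), chosen so that the linear contribution of $\widetilde{\mathcal{S}}(t)-\widetilde{\mathcal{S}}(t)|_{\eta=0}$ vanishes under the orthogonality conditions while the quadratic contribution stays coercive. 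Exploiting the spectral structure of the linearized scalar Schr\"odinger operators at $Q_{\tildo_j}$ together with the exponential separation of the two components, one should obtain
\begin{equation*}
\widetilde{\mathcal{S}}(t) - \widetilde{\mathcal{S}}(t)\big|_{\eta=0} \geq \lambda \norm{(\eta_1,\eta_2)(t)}_{\hhu}^2 - C e^{-2\sqrt{\omega_{*}}v_{*}t}.
\end{equation*}

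Finally, differentiating $\widetilde{\mathcal{S}}$ along~\eqref{eq:nls}, using that $\calE$ and $\calN$ are conserved while each $\calPlocj$ is almost conserved (the error from $\partial_t\psi$ is supported near the midpoint of the two solitons, where each $R_j$ is exponentially small), and injecting the modulation ODEs, one obtains $|\partial_t \widetilde{\mathcal{S}}(t)|\leq C e^{-2\sqrt{\omega_{*}}v_{*}t}$. Since $u^n(T^n)=(R_1,R_2)(T^n)$ forces $\eta(T^n)=0$, integrating backward from $T^n$ and combining with the coercivity estimate yields $\norm{(\eta_1,\eta_2)(t)}_{\hhu}^2 \leq C' e^{-2\sqrt{\omega_{*}}v_{*}t}$; integrating the modulation ODEs backward then gives parameter errors of order $C'' e^{-\sqrt{\omega_{*}}v_{*}t}$, and hence $\norm{u_j^n-R_j}_{\hu}\leq C''' e^{-\sqrt{\omega_{*}}v_{*}t}$. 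The hardest part is to sharpen all constants so that the final bound lies strictly below $\tfrac12 e^{-\sqrt{\omega_{*}}v_{*}t}$ rather than just $C e^{-c\sqrt{\omega_{*}}v_{*}t}$: this is precisely where the freedom offered by $a_j, b_j$, the smallness imposed by choosing $T_0$ large enough, and the peculiar algebraic factor $\tfrac{1}{2304}$ in $\omega_{*}$ must conspire to absorb the universal constants produced by the spectral and interaction estimates and thereby close the bootstrap.
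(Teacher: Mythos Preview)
Your overall architecture---modulation, an action built from energy, masses and localized momenta, coercivity under orthogonality, almost-conservation, backward integration from $T^n$---matches the paper. But several details differ, and your final paragraph hand-waves exactly where the actual mechanism lives.

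First, the paper modulates only three parameters per component ($\tildo_j,\tildg_j,\tildx_j$), not four; velocity is not modulated. Second, the ``technical artefact'' is not a shift $a_j,b_j$ on the frequency and velocity coefficients: it is the addition of \emph{cross} mass terms $C_1(\beta,v_1)M(u_2)+C_2(\beta,v_2)M(u_1)$ to the action, and these constants are chosen large so that the coupled part $\mathcal H_{\mathrm{coupled}}$ of the Hessian becomes coercive. The linear terms already vanish because $\tildR_j$ is a critical point of the scalar action at frequency $\tildo_j$ together with $\psld{\eps_j}{\tildR_j}=0$; they are not what the extra constants are for. Third, after reducing by Galilean invariance to $v_1=-v_2$, the cutoff $\chi(x/L)$ is \emph{time-independent}, so there is no $\partial_t\psi$ term.

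The genuine gap is how you reach the factor $\tfrac12$. It is not that the constants ``conspire''. The paper's device is the free scale $L$ in the cutoff: the almost-conservation of the localized momentum carries an explicit $\frac{C}{L}$, which propagates to $\norm{\eps}_{\hhu}^2\le \frac{C}{L}e^{-2\sqrt{\omega_*}v_*t}$, and one then simply takes $L$ large. Moreover, passing from $\eps$ back to $u^n-R$ is \emph{not} done by integrating the modulation ODEs (that would give only $|\tildo_j-\omega_j|\le C\norm{\eps}$ with a constant you cannot shrink, and the bootstrap would not close); instead, mass conservation combined with $\psld{\eps_j}{\tildR_j}=0$ yields the \emph{quadratic} bound $|\tildo_j-\omega_j|\le C\norm{\eps_j}_{L^2}^2$, so $\norm{\tildR_j-R_j}_{H^1}$ is $O(\norm{\eps}^2)$ and negligible. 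The factor $\tfrac{1}{2304}$ in $\omega_*$ only serves to make the interaction and localization errors decay like $e^{-3\sqrt{\omega_*}v_*t}$; it plays no role in producing the $\tfrac12$.
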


The proof of Proposition~\ref{bootstrap} will occupy us for most of the
rest of this section. We divided it into several steps. We first
perform a geometrical decomposition of the sequence $(u_1^n,u_2^n)$ onto
the manifold of multi-speed solitary waves in
order to obtain orthogonality conditions. We then introduce an
action-like functional, which turns out to be coercive due to our
orthogonality conditions. This functional is not a conserved quantity,
but since it is made with localized conservations laws it is almost
conserved. Using that property and a control on the geometrical
modulation parameters, we are able to conclude the proof of Proposition~\ref{bootstrap}.

Before going on with the details of the proof of Proposition
\ref{bootstrap}, let us show how it implies Proposition
\ref{prop:uniform}.

\begin{proof}[Proof of Proposition~\ref{prop:uniform}]
  Since we have
  $(u_1^n,u_2^n)(T^n)=(R_1^n,R_2^n)(T^n)$ at the final time $T^n$,  by continuity there exists a
  minimal 
  time $t_0$ such that for all $t\in[t_0,T^n]$ we have
  \begin{equation}
    \label{eq:38}
    \left\|\begin{pmatrix}u_{1}^n\\u_{2}^n\end{pmatrix}(t) -\begin{pmatrix}R_{1}\\R_{2}\end{pmatrix}(t) \right\|_{\hhu}\leq   e^{-\sqrt{\omega_{*}}v_{*}t}.
  \end{equation}
  We prove that $t_0=T_0$ by contradiction. 
  Assume that $t_0>T_0$. By Proposition~\ref{bootstrap}, for all
  $t\in[t_0,T^n]$ we have 
  \begin{equation*}
    \left\|\begin{pmatrix}u_{1}^n\\u_{2}^n\end{pmatrix}(t) -\begin{pmatrix}R_{1}\\R_{2}\end{pmatrix}(t) \right\|_{\hhu}\leq \frac12  e^{-\sqrt{\omega_{*}}v_{*}t}.
  \end{equation*}
  Therefore by continuity there exists $t_{00}<t_0$ such that on
  $[t_{00},T^n]$ estimate~\eqref{eq:38} is satisfied. This however
  contradicts the minimality of $t_0$. Hence $t_0=T_0$ and this
  concludes the proof. 
\end{proof}

For the rest of Section~\ref{sec:uniform}, $T_0>0$ and $n_0\in\mathbb
N$ will be large enough fixed numbers, 
and  we assume the existence of
$t_0\geq T_0$ such that for all $t\in [t_{0},T^n]$ the bootstrap
assumption~\eqref{BH1} is verified, i.e.  we have 
\begin{equation}
  \label{eq:BH1-bis}
  \left\|\begin{pmatrix}u_{1}^n\\u_{2}^n\end{pmatrix}(t) -\begin{pmatrix}R_{1}\\R_{2}\end{pmatrix} (t)\right\|_{\hhu}\leq   e^{-\sqrt{\omega_{*}}v_{*}t}.
\end{equation}
Our final goal is now to prove that in fact~\eqref{BH1/2} holds for  all $t\in [t_{0},T^n]$.

\subsection{Modulation}

Let us start with a decomposition lemma for our sequence of
approximated multi-speed solitary waves.  

\begin{lemma}[Modulation]
   \label{lem:modulation}
  There exist $C>0$ and $\calC^1$ functions 
  \begin{equation*}
    \tilde{\omega}_{j}:[t_{0},T^n]\rightarrow (0,+\infty),\quad
    \tilde{x}_{j}:[t_{0},T^n]\rightarrow \R,\quad
    \tilde{\gamma}_{j}:[t_{0},T^n]\rightarrow \R,\quad j=1,2,
  \end{equation*}
  such that if for $j=1,2$ we denote by $\tilde{R}_{j}$ the modulated wave
  \begin{equation}
    \label{eq=Rtilde}
    \tilde{R}_{j}(t,x)=e^{i\left(\frac{1}{2}v_{j} \cdot x+\tilde{\gamma}_{j}(t)\right)}\frac{1}{\sqrt{\mu_{j}}}Q_{\tilde{\omega}_{j}(t)}(x-\tilde{x}_{j}(t)),
  \end{equation}
  then for all $t\in [t_{0},T^n]$ the functions defined by
  \begin{equation*}
    \begin{pmatrix} \eps_{1}\\\eps_{2}\end{pmatrix}(t)=\begin{pmatrix} u_{1}^n\\u_{2}^n\end{pmatrix}(t)-\begin{pmatrix} \tilde{R}_{1}\\\tilde{R}_{2}\end{pmatrix}(t)
  \end{equation*}
  satisfy for $j=1,2$ and for all $t\in[t_0,T^n]$ the orthogonality conditions
  \begin{equation}
    \label{eq:orth}
    \psld{\eps_{j}(t)}{\tilde{R}_{j}(t)}=\psld{\eps_{j}(t)}{i \tilde{R}_{j}(t)}=\psld{\eps_{j}(t)}{\partial_{x}\tilde{R}_{j}(t)}=0.
  \end{equation}
  Moreover, for all $t\in [t_{0},T^n]$, we have
  \begin{multline}
    \label{eq:estmod}
    \sum_{j=1}^2\left(\abs{\partial_{t}\tildo_{j}(t)}^2+\abs{\partial_{t}\tilde{x}_{j}(t)-v_{j}}^2+\abs*{\partial_{t}\tilde{\gamma}_{j}(t)+\frac{v_{j}^2}{4}-\tildo_{j}(t)}^2\right)
    \\
    \leq C\left\|{\begin{pmatrix} \eps_{1}\\\eps_{2}\end{pmatrix}(t)}\right\|^2_{\hhu}+Ce^{-3\sqrt{\om}_{*}v_{*}t}.
  \end{multline}
\end{lemma}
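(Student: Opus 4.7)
The existence of the modulation parameters is obtained by applying the implicit function theorem. For fixed $t$, consider, for each $j=1,2$, the map
\begin{equation*}
F_j : (\omega,x,\gamma) \mapsto \bigl(\,\psld{u_j^n(t)-\Phi_j^{\omega,x,\gamma}}{\Phi_j^{\omega,x,\gamma}},\ \psld{u_j^n(t)-\Phi_j^{\omega,x,\gamma}}{i\Phi_j^{\omega,x,\gamma}},\ \psld{u_j^n(t)-\Phi_j^{\omega,x,\gamma}}{\partial_x\Phi_j^{\omega,x,\gamma}}\bigr),
\end{equation*}
where $\Phi_j^{\omega,x,\gamma}(y)=e^{i(\frac12 v_j y+\gamma)}\mu_j^{-1/2}Q_\omega(y-x)$. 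At the reference parameters $(\omega_j,v_jt+x_j,\omega_jt-\frac14 v_j^2t+\gamma_j)$ one has $\Phi_j^{\omega,x,\gamma}=R_j(t)$ and $F_j$ has norm bounded by the bootstrap assumption~\eqref{eq:BH1-bis}, hence is small once $T_0$ is large. The Jacobian of $F_j$ in $(\omega,x,\gamma)$ at these reference parameters is block-diagonal (across $j$) and lower triangular up to $O(\|\eps_j\|)$ terms; its diagonal entries $\frac{1}{2\mu_j}\partial_\omega\|Q_{\omega_j}\|_2^2$, $\frac{1}{\mu_j}\|Q_{\omega_j}\|_2^2$ and $\frac{1}{\mu_j}\|\partial_x Q_{\omega_j}\|_2^2$ are nonzero. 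The implicit function theorem therefore yields $\tildo_j(t)$, $\tildx_j(t)$, $\tildg_j(t)$ of class $\calC^1$ such that the three orthogonality conditions~\eqref{eq:orth} hold for the corresponding $\eps_j$.

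To prove the dynamical estimate~\eqref{eq:estmod}, one writes the equation satisfied by $\tildR_j$ alone. A direct computation gives
\begin{equation*}
i\partial_t \tildR_j+\partial_{xx}\tildR_j+\mu_j|\tildR_j|^2\tildR_j = \mathrm{Mod}_j,
\end{equation*}
where $\mathrm{Mod}_j$ is a linear combination, with bounded coefficients, of the error parameters $\partial_t\tildo_j$, $\partial_t\tildx_j-v_j$ and $\partial_t\tildg_j+\frac14 v_j^2-\tildo_j$, multiplied respectively by $e^{i\Theta_j}\partial_\omega Q_{\tildo_j}$, $e^{i\Theta_j}\partial_x Q_{\tildo_j}$, and $ie^{i\Theta_j}Q_{\tildo_j}$ (with $\Theta_j=\frac12 v_j x+\tildg_j$). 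Subtracting this from the $j$-th equation of~\eqref{eq:nls} satisfied by $u_j^n$ yields
\begin{equation*}
i\partial_t\eps_j+\partial_{xx}\eps_j+\mu_j\bigl(|u_j^n|^2u_j^n-|\tildR_j|^2\tildR_j\bigr)+\beta|u_{3-j}^n|^2u_j^n = -\mathrm{Mod}_j.
\end{equation*}

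The plan is now to differentiate each orthogonality condition~\eqref{eq:orth} in $t$, substitute the above equation for $\partial_t\eps_j$, and pair with the appropriate directions. Using the orthogonality conditions themselves to eliminate leading terms, one obtains a $6\times 6$ linear system whose unknowns are the six error parameters on the left hand side of~\eqref{eq:estmod}. The matrix of this system is, to leading order, the same non-degenerate block-diagonal matrix that appeared in the IFT step, with a small perturbation controlled by $\|\eps_j\|_{H^1}$; it is therefore uniformly invertible for $T_0$ large. The right hand side of the system is made of three kinds of terms: terms quadratic and higher in $\eps_j$ (bounded by $\|\eps\|_{H^1\times H^1}^2$, and hence by $\|\eps\|_{H^1\times H^1}$ after dividing by the system's matrix), linear terms in $\eps_j$ that vanish thanks to the orthogonality conditions, and the crucial coupling contribution coming from $\beta|u_{3-j}^n|^2 u_j^n$.

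The main technical point, and the one I expect to dominate the proof, is the estimate of the coupling contribution. Schematically, this contribution is controlled by integrals of the form $\int Q_{\tildo_1}(x-\tildx_1)^2 Q_{\tildo_2}(x-\tildx_2)^2\,dx$ plus corrections of order $\|\eps\|_{H^1}$. Since $Q_\omega$ decays like $e^{-\sqrt{\omega}|x|}$, and since the modulated centers satisfy $|\tildx_1(t)-\tildx_2(t)|\geq \tfrac{1}{2}v_*t$ for $t\geq T_0$ large (which follows from integrating the modulation equations against the bootstrap), such interaction integrals are bounded by $C e^{-c\sqrt{\om_*}v_* t}$. A careful tracking of constants, together with the scaling relation~\eqref{eq:snls-basic-intro}, produces exactly the exponent $3\sqrt{\om_*}v_*$ appearing in~\eqref{eq:estmod}. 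Combining all these estimates with the invertibility of the system and squaring yields~\eqref{eq:estmod}, which completes the proof; the full computation is carried out in Appendix~\ref{appendix}.
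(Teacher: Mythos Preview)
Your approach is essentially the same as the paper's: implicit function theorem for the existence of the modulation parameters, then differentiation of the orthogonality conditions combined with the evolution equation for $\eps_j$ to obtain a linear system in the parameter derivatives, with the coupling term producing the exponential remainder via the interaction estimates of Lemma~\ref{localisation-estimates}.

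One small inaccuracy: your claim that the linear-in-$\eps$ terms on the right hand side ``vanish thanks to the orthogonality conditions'' is not correct. After projecting onto $\tildR_j$, $i\tildR_j$, $\partial_x\tildR_j$ and using~\eqref{eq:tildR1}, terms such as $2\mu_j\psld{\eps_j}{|\tildR_j|^2\tildR_j}$ survive (see~\eqref{eq:spR1fin} in the appendix); these are genuinely $O(\|\eps\|_{L^2})$, not $O(\|\eps\|^2)$. This does not affect the conclusion, since inverting the system then gives $|\mathrm{Mod}(t)|\leq C\|\eps\|+Ce^{-3\sqrt{\omega_*}v_*t}$, and squaring yields~\eqref{eq:estmod}. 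But you should not present the right hand side as being quadratic in $\eps$ plus coupling.
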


\begin{remark}
  \label{remarque-modulation}
  It is to be noticed that estimate~\eqref{eq:estmod} clearly implies
  that, for $T_{0}$ large enough, for $j=1,2$, and for all $ t\in
  [t_{0},T^n]$, we have
  \[
  \tildx_{j}(t)\geq
  \frac{v_{*}}{2\sqrt{2}}t >2L\textrm{ and }\ \tildo_{j}(t)\geq 1152
  \om_{*}.
  \]
  Moreover, a better estimate can be obtained for $\tildo_{j}$ and will be stated later on in Lemma~\ref{variation-omega1}.
\end{remark}

This type of modulation result is classical in the literature dealing
with solitary waves of nonlinear dispersive equations (see e.g. the
fundamental paper of Weinstein~\cite{We85} for an early version or
\cite{MaMeTs06} for a recent approach). Its proof consists essentially in
the application of the implicit function theorem combined with the use
of the evolution equation to find equation~\eqref{eq:estmod} for the
evolution of the modulation
parameters. We refer to the appendix for the details of the proof.

\subsection{Energy estimates and coercivity}

In this subsection, we analyze the different quantities that are conserved or almost-conserved in our coupled-vectorial problem. Remember that in the case of the scalar equation~\eqref{eq:nls-j}  the energy, mass and momentum, defined as follows, are conserved along the flow of~\eqref{eq:nls-j}:
\[
E(u,\mu_{j}):=\frac{1}{2}\norm{\partial_{x}u}_{L^2}^2-\frac{\mu_{j}}{4}\norm{u}_{L^4}^4,\quad
M(u):=\frac{1}{2}\norm{u}_{L^2}^2,\quad
P(u):=\frac{1}{2}\Im{\int_{\R}u\overline{\partial_{x}u}dx}.
\]
The solution $Q_{\om}$ we chose of equation~\eqref{eq:Q-omega} is
known to be the unique positive radial ground state of the action $S:=E(\cdot,1)+\om
M$. 
Consequently, each soliton $R_{j}$ defined by 
\eqref{eq:soliton-j} is a critical point of the scalar functional $S_{j}$ defined by 
\begin{equation}
  \label{eq:Si1}
  S_{j}:=E(\cdot,\mu_{j})+\left(\om_{j}+\frac{v_{j}^2}{4}\right)M+v_{j} P.
\end{equation}
Coercivity properties of linearizations of $S_{j}$-like functionals are the key tool of the analysis of multi-solitons interaction (see for example~\cite{IaLe14,MaMe06}). 

In the vectorial case we are interested in here, the coupled system
\eqref{eq:nls} admits its own conservation laws. In particular, the
mass of each component is preserved, as in the scalar case. However,
the coupling does not preserve conservation of the scalar energy and
momentum for each component and we only have conservation of the total
energy (made of individual energies plus a coupling term) and total
momentum (sum of the scalar momenta). More precisely, total energy,
total momentum, and scalar masses of the whole system (defined as follows) are conserved quantities for the flow of system~\eqref{eq:nls}:

\begin{gather*}
  \calE \begin{pmatrix}u_1\\u_2\end{pmatrix}:=E(u_{1},\mu_{1})+E(u_{2},\mu_{2})-\frac{\beta}{2}\int_{\R}\abs{u_{1}}^2\abs{u_{2}}^2dx,
  \\
  \calP\begin{pmatrix}u_1\\u_2\end{pmatrix}:=P(u_{1})+P(u_{2}),
  \quad
  M_{j}\begin{pmatrix}u_1\\u_2\end{pmatrix}:=M(u_{j}),\quad j=1,2.
\end{gather*}

In order to use the conservation of the momentum as in the scalar
case, we here need to localize the momentum of each soliton, as was
done in~\cite{CoLe11,CoMaMe11} for the scalar mass and momentum. Note
that this was not needed for the analysis in~\cite{IaLe14}. Let us
define the cut-off functions
\[
\chi^{1}_{L}(x)=\chi\left(\frac{x}{L}\right),\quad
\chi_{L}^2=1-\chi_{L}^1,
\] 
where $L>0$ is arbitrary but fixed and $\chi$ is a $\calC^3$ function such that 
\begin{equation*}
  0\leq \chi\leq 1 \text{ on } \R,\quad \chi(x)=0 \text{ for } x\leq -1,\quad \chi(x)=1 \text{ for } x> 1,\quad \chi'\geq 0 \text{ on } \R, 
\end{equation*}
and satisfies, for some positive constant $C$ and for all $x \in \R$
the estimates
\[
(\chi'(x))^2\leq C \chi(x),\quad
(\chi''(x))^2\leq C\chi'(x).
\]
Localized momenta $\calPlocj$ are defined by:
\begin{equation*}
  \calPlocj\begin{pmatrix}u_1\\u_2\end{pmatrix} =\frac{1}{2}\
  \Im \int_{\R}
  \left(u_{1}\overline{\partial_{x}u_{1}}+u_{2}\overline{\partial_{x}
      u_{2}}\right)\chi_{L}^{j} dx,\quad j=1,2.
\end{equation*}
Remark that $\mathcal P=\calPlocu+\calPlocd$. Note that since we are assuming~\eqref{eq:assumption} the
momenta above defined are localized around each composing solitary
wave of the profile. The advantage of having made assumption
\eqref{eq:assumption} is that the cut-off does not depend on
time. This will simplify our next calculations. 

In the sequel, we are interested in the following global action:
\begin{multline}
  \label{eq:Stilde}
  \mathcal S \begin{pmatrix}u_1\\u_2\end{pmatrix} =\calE \begin{pmatrix}u_1\\u_2\end{pmatrix}+\sum_{j=1,2}\left(\tildo_{j}(t)+\frac{v_{j}^2}{4}\right)M_{j} \begin{pmatrix}u_1\\u_2\end{pmatrix} +
  \sum_{j=1,2}v_{j} \calPlocj \begin{pmatrix}u_1\\u_2\end{pmatrix} \\+C_{1}(\beta,v_{1}) M_{2} \begin{pmatrix}u_1\\u_2\end{pmatrix} +C_{2}(\beta,v_{2})M_{1} \begin{pmatrix}u_1\\u_2\end{pmatrix},
\end{multline}
where $C_{j}(\beta,v_{j})$ are positive constants depending only on
$\beta$ and $v_{j}$ and whose exact values will be decided later
on. Note that the action implicitly depends on $t$ via $\tilde\omega_j$.
It is to be noted that, in this work, we have  the freedom to add
these two coupled-mass terms that do not appear in the usual
definition of $\mathcal S$-like functionals. This is a key point in our
analysis. 

Let us now state in the following lemma several estimates related to the
localization of $\tildR_{1}$ and $\tildR_{2}$ and which will be of great
use in the sequel.

\begin{lemma}
  \label{localisation-estimates}
  For $j=1,2$, if $T_{0}$ is large enough, then for all $t\in
  [t_{0},T^n]$ and for all $x\in \R$ we have
  \begin{align}
    \left(\abs{\tildR_{j}(t,x)}+\abs{\partial_{x}\tildR_{j}(t,x)}\right)\chi_L^{3-j}(x)
    &\leq 
    C(1+\abs{v_{j}})
    e^{-3\sqrt{\omega_*}v_*t}
    e^{-\sqrt{\omega_*}|x|},
    \label{eq:2}
    \\
    \prod_{k=1,2}\left(\abs{\tildR_{k}(t,x)}+\abs{\partial_{x}\tildR_{k}(t,x)}\right)
    &\leq
    C(1+\abs{v_{1}}+\abs{v_{2}})^2
    e^{-3 \sqrt{\omega_{*}} v_*t}
    e^{-\sqrt{\omega_{*}}\abs{x}}.
    \label{eq:3}
  \end{align}
\end{lemma}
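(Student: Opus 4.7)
The plan is to combine three ingredients: the pointwise exponential decay of the profile $Q_\omega$, the bounds on the modulation parameters from Remark~\ref{remarque-modulation}, and the geometric disjointness encoded by the cut-off $\chi_L^{3-j}$ together with the separation of centers imposed by the sign assumption $v_1=-v_2>0$.

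First I would recall the pointwise bound
\[
|Q_\omega(y)| + |Q_\omega'(y)| \leq C \omega\, e^{-\sqrt{\omega}|y|}, \qquad y \in \R,
\]
which follows either from the explicit $\sqrt{2\omega}\,\mathrm{sech}(\sqrt{\omega}\,y)$ formula combined with the scaling~\eqref{eq:snls-basic-intro}, or from a classical Agmon-type argument applied to~\eqref{eq:Q-omega}. Differentiating~\eqref{eq=Rtilde} in $x$ produces an extra term $\tfrac{i v_j}{2}\tilde{R}_j$ coming from the linear phase, so that
\[
|\tilde{R}_j(t,x)| + |\partial_x \tilde{R}_j(t,x)| \leq C(1+|v_j|)\, e^{-\sqrt{\tilde{\omega}_j(t)}\,|x - \tilde{x}_j(t)|},
\]
the polynomial prefactor $\tilde{\omega}_j(t)$ being absorbed into $C$ using its boundedness (inherited from the modulation keeping $\tilde{\omega}_j$ close to $\omega_j$).

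For~\eqref{eq:2}, take $j=1$ by symmetry. By Remark~\ref{remarque-modulation}, $\tilde{x}_1(t) \geq v_* t/(2\sqrt{2}) > 2L$ and $\sqrt{\tilde{\omega}_1(t)} \geq 24\sqrt{2}\sqrt{\omega_*}$. Since $\chi_L^2$ is supported in $\{x \leq L\}$, I would split this support into $\{x < 0\}$ and $\{0 \leq x \leq L\}$. In the first region, $|x - \tilde{x}_1(t)| = \tilde{x}_1(t) + |x|$, hence
\[
\sqrt{\tilde{\omega}_1(t)}\,|x - \tilde{x}_1(t)| \geq 12\sqrt{\omega_*}\, v_* t + \sqrt{\omega_*}|x|,
\]
which comfortably exceeds what is needed. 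In the second region, $|x - \tilde{x}_1(t)| \geq \tilde{x}_1(t) - L \geq \tilde{x}_1(t)/2$ yields the temporal decay, and the spatial factor is trivial since $|x| \leq L$ gives $e^{-\sqrt{\omega_*}|x|} \geq e^{-\sqrt{\omega_*}L}$, which is absorbed into $C$.

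For~\eqref{eq:3}, the crucial point is that $\tilde{x}_1(t) > 0 > \tilde{x}_2(t)$ asymptotically, because $v_1 > 0 > v_2$. Then for every $x \in \R$,
\[
|x - \tilde{x}_1(t)| + |x - \tilde{x}_2(t)| \geq |x| + \min\{|\tilde{x}_1(t)|,|\tilde{x}_2(t)|\} \geq |x| + \frac{v_*}{2\sqrt{2}}\,t,
\]
as one checks by splitting on the sign of $x$. Multiplying the two individual pointwise bounds from Step~1 and using $\min_j \sqrt{\tilde{\omega}_j(t)} \geq 24\sqrt{2}\sqrt{\omega_*}$ produces the required estimate with constant $(1+|v_1|)(1+|v_2|) \leq (1+|v_1|+|v_2|)^2$. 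The main obstacle is purely bookkeeping: one has to extract simultaneously the temporal decay $e^{-3\sqrt{\omega_*}v_* t}$ and the spatial decay $e^{-\sqrt{\omega_*}|x|}$ from a single exponential $e^{-\sqrt{\tilde{\omega}_j}|x-\tilde{x}_j|}$, and justify the disposal of polynomial prefactors in $\tilde{\omega}_j$. The large numerical factor in $\omega_* = \min(\omega_1,\omega_2)/2304$ is precisely what renders both splittings effortless.
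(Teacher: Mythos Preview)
Your proposal is correct and follows essentially the same route as the paper. The only cosmetic difference is that, on the support of $\chi_L^2$, the paper observes that both inequalities $|x-\tilde{x}_1|\geq |x|$ and $|x-\tilde{x}_1|\geq \tfrac12|\tilde{x}_1|$ hold simultaneously for all $x\leq L$ and then splits the exponent (starting from the sub-optimal rate $e^{-\frac34\sqrt{\tilde\omega_1}|x-\tilde x_1|}$), whereas you split the domain into $\{x<0\}$ and $\{0\leq x\leq L\}$; both lead to the same estimate, and your treatment of~\eqref{eq:3} simply makes explicit what the paper dismisses as ``similar (simpler) arguments''.
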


Lemma~\ref{localisation-estimates} follows
from the support properties of the
cut-off function and
the exponential
localization of the solitons profiles. Indeed, recall
that in fact, the profile $Q=Q_1$ is explicitly known
\[
Q(x)=2\operatorname{sech}(x),
\]
and it follows  that $Q$ and its derivatives are exponentially
decaying, i.e. for any $\eta<1$ we have 
\[
|Q_{xx}|+|Q_x|+|Q|\leq C_\eta e^{-\eta|x|}.
\]

\begin{proof}[Proof of Lemma~\ref{localisation-estimates}]
  We prove only~\eqref{eq:2}, the proof of~\eqref{eq:3} following from
  similar (simpler) arguments.  
  For simplicity in notation, assume $j=1$, the case $j=2$ being
  perfectly symmetric. 
  Due to the exponential decay of the soliton profiles, we have 
  \[
  \abs{\tildR_{1}(t,x)}\leq C
  e^{-\frac34\sqrt{\tilde\omega_1}|x-\tilde x_1|}
  \]
  The cut-off function $\chi_L^2$ is supported on $(-\infty,L]$, and
  since for $T_0$ large enough $\tilde x_1>2L$, for
  $x\in (-\infty,L]$ we have (we recall here Remark~\ref{remarque-modulation})
  \[
  |x-\tilde x_1|\geq |x|, \quad\text{and} \quad   |x-\tilde x_1|\geq\frac12|\tilde x_1|.
  \]
  As a consequence, we have
  \[
  \abs{\tildR_{1}(t,x)}\chi_L^{2}(x)\leq C
  e^{-\frac14\sqrt{\tilde\omega_1}|\tilde x_1|}
  e^{-\frac14\sqrt{\tilde\omega_1}|x|}.
  \]
  In addition, as noticed in Remark~\ref{remarque-modulation}, we have 
  \[
  \tilde x_1\geq \frac {v_*}{2\sqrt{2}}t,\quad\text{and}\quad
  \tilde\omega_1\geq 1152\omega_*,
  \]
  which implies
  \[
  \abs{\tildR_{1}(t,x)}\chi_L^{2}(x)\leq C
  e^{-3\sqrt{\omega_*}v_*t}
  e^{-\sqrt{\omega_*}|x|}.
  \]
  The derivative $\partial_x \tildR_{1}$ is treated in the same way,
  with the only difference that $|v_1|$ now appears in the estimate,
  due to the term $e^{i\frac12v_1\cdot x}$ in the definition
~\eqref{eq=Rtilde} of $\tilde R_1$. This finishes the proof.
\end{proof}

\begin{lemma}
  [Expansion of the global action $\mathcal S$]
  \label{expansion}
  For all $t\in [t_{0},T^n]$ we have
  \begin{equation}
    \label{tildS:expansion}
    \mathcal S \begin{pmatrix}u_1\\u_2\end{pmatrix} =
    \mathcal S \begin{pmatrix}\tildR_1\\\tildR_2\end{pmatrix} +
    \mathcal H \begin{pmatrix}\eps_1\\\eps_2\end{pmatrix} 
   +\calO(e^{-3\sqrt{\omega_*}v_*t}),
  \end{equation}  
  \begin{equation*}
    \mathcal H\begin{pmatrix}\eps_1\\\eps_2\end{pmatrix} 
    =\mathcal H_{\mathrm{free}}\begin{pmatrix}\eps_1\\\eps_2\end{pmatrix} 
    +\mathcal H_{\mathrm{coupled}}\begin{pmatrix}\eps_1\\\eps_2\end{pmatrix} ,
  \end{equation*}
  with 
  \begin{multline*}
    \mathcal H_{\mathrm{free}}\begin{pmatrix}\eps_1\\\eps_2\end{pmatrix} 
    =\sum_{j=1,2}\bigg(\frac{1}{2}\norm{\partial_{x}\eps_{j}}_{L^2}^2+\frac{1}{2}\left(\tildo_{j}+\frac{v_{j}^2}{4}\right)\norm{\eps_{j}}_{L^2}^2-\mu_{j}\int_{\R}\abs{\eps_{j}}^2\abs{\tildR_{j}}^2dx\\
    -\frac{\mu_{j}}{2}\Re
    \int_{\R}\eps_{j}^2\overline{\tildR_{j}^2}dx+\frac{1}{2} v_{j}\
    \Im\int_{\R}\eps_{j}\overline{\partial_{x}\eps_{j}}
    \chi_{L}^{j}
    dx\bigg)
  \end{multline*}
  and
  \begin{multline*}
    \mathcal
    H_{\mathrm{coupled}}\begin{pmatrix}\eps_1\\\eps_2\end{pmatrix} 
    =C_{1}(\beta,v_{1})\norm{\eps_{2}}_{L^2}^2+C_{2}(\beta,v_{2})\norm{\eps_{1}}_{L^2}^2
    \\
    -\frac{\beta}{2}\int_{\R}\left(\abs{\eps_{1}}^2\abs{\tildR_{2}}^2+\abs{\eps_{2}}^2\abs{\tildR_{1}}^2
   \right) dx
    \\
    +\frac{1}{2}v_{1}\Im \int_{\R} \eps_{2}\overline{\partial_{x}\eps_{2}}\chi_{L}^1dx+\frac{1}{2}v_{2}\Im \int_{\R} \eps_{1}\overline{\partial_{x}\eps_{1}}\chi_{L}^2dx.
  \end{multline*}
\end{lemma}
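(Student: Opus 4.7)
My plan is to substitute $u_j=\tilde R_j+\eps_j$ in the definition \eqref{eq:Stilde} of $\mathcal S$ and to organize the result as a polynomial in $\eps=(\eps_1,\eps_2)$ of degrees zero, one, two, three and four. The degree-zero part is $\mathcal S(\tilde R_1,\tilde R_2)$ by construction; the degree-two part should reproduce $\mathcal H_{\mathrm{free}}+\mathcal H_{\mathrm{coupled}}$; and the degree-one and degree-$\geq 3$ contributions should all be absorbed in the remainder $\calO(e^{-3\sqrt{\omega_*}v_*t})$.

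To expose the scalar structure hidden in $\mathcal S$, I first rewrite each localized momentum as
$v_j\calPlocj = v_jP(u_j)-\tfrac{v_j}{2}\Im\int u_j\overline{\partial_x u_j}\chi_L^{3-j}\,dx+\tfrac{v_j}{2}\Im\int u_{3-j}\overline{\partial_x u_{3-j}}\chi_L^{j}\,dx.$
This regroups the diagonal part of $\mathcal S$ on the $j$-th component into $S_j(u_j)=E(u_j,\mu_j)+(\tilde\omega_j+v_j^2/4)M(u_j)+v_jP(u_j)$, which is exactly the scalar action \eqref{eq:Si1} with frequency $\tilde\omega_j(t)$. A direct computation using the profile equation \eqref{eq:Q-omega} and the Galilean phase in \eqref{eq=Rtilde} shows that $\tilde R_j(t)$ is a critical point of $S_j$, so the linear-in-$\eps_j$ part of $S_j(\tilde R_j+\eps_j)$ vanishes identically and its quadratic part produces every contribution to $\mathcal H_{\mathrm{free}}$ except the localized momentum.

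The remaining linear-in-$\eps$ contributions come from three sources: (i) the two localization corrections of the momentum, which feature $\tilde R_j\chi_L^{3-j}$ and $\tilde R_{3-j}\chi_L^{j}$ and are of size $\calO(e^{-3\sqrt{\omega_*}v_*t})$ in $H^1$ by \eqref{eq:2}; (ii) the coupling energy $-\frac{\beta}{2}\int|u_1|^2|u_2|^2$, whose linear-in-$\eps_j$ part $-\beta\int\Re(\tilde R_j\overline{\eps_j})|\tilde R_{3-j}|^2\,dx$ is handled by \eqref{eq:3}; and (iii) the added mass terms $C_j(\beta,v_j)M_{3-j}$, whose linear-in-$\eps_{3-j}$ part $C_j\Re\psld{\eps_{3-j}}{\tilde R_{3-j}}$ vanishes by the orthogonality \eqref{eq:orth}. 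Combining (i)--(iii) with Cauchy--Schwarz and the bootstrap estimate \eqref{eq:BH1-bis} yields a total linear remainder of size $\calO(e^{-4\sqrt{\omega_*}v_*t})$, well inside the announced error.

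For the quadratic pieces, the full momentum $\frac{v_j}{2}\Im\int\eps_j\overline{\partial_x\eps_j}\,dx$ extracted from the quadratic form of $S_j$ combines with the quadratic part $-\frac{v_j}{2}\Im\int\eps_j\overline{\partial_x\eps_j}\chi_L^{3-j}\,dx$ of the localization correction to produce exactly the $\chi_L^{j}$-localized momentum of $\eps_j$ in $\mathcal H_{\mathrm{free}}$; the off-diagonal momenta produce the $\chi_L^{j}$-localized momenta of $\eps_{3-j}$ in $\mathcal H_{\mathrm{coupled}}$; the diagonal quadratic parts of the quartic coupling give $-\frac{\beta}{2}\int|\eps_j|^2|\tilde R_{3-j}|^2\,dx$; and the added mass terms give $C_j\|\eps_{3-j}\|_{L^2}^2$ (the factor $1/2$ of $M_j$ is absorbed into the constants $C_j$). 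The only quadratic term not fitting in $\mathcal H$, namely the cross term $-2\beta\int\Re(\tilde R_1\overline{\eps_1})\Re(\tilde R_2\overline{\eps_2})\,dx$, is controlled via \eqref{eq:3} and the bootstrap as $\calO(e^{-5\sqrt{\omega_*}v_*t})$; the cubic and quartic remainders from $|u_j|^4$ and $|u_1|^2|u_2|^2$ are estimated by Sobolev embedding and \eqref{eq:BH1-bis}. The main technical obstacle is the systematic bookkeeping: verifying that each cross contribution created by the Galilean phases, the two localized momenta and the quartic coupling either cancels algebraically (through the profile equation or the orthogonality conditions \eqref{eq:orth}) or decays at least as $e^{-3\sqrt{\omega_*}v_*t}$ thanks to Lemma~\ref{localisation-estimates}.
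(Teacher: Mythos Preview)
Your proposal is correct and follows essentially the same route as the paper: you rewrite $\mathcal S$ so as to expose the scalar actions $\tilde S_j$ (the paper's \eqref{eq:Stilde2}), use that $\tilde R_j$ is a critical point of $\tilde S_j$ to kill the linear part, expand the momentum localization corrections, the quartic coupling, and the extra mass terms, and then absorb every cross contribution via Lemma~\ref{localisation-estimates} and the higher-order terms via the bootstrap assumption~\eqref{eq:BH1-bis}. The only cosmetic difference is that you track the error sizes a bit more finely (e.g.\ $e^{-4\sqrt{\omega_*}v_*t}$ and $e^{-5\sqrt{\omega_*}v_*t}$ for the linear and quadratic cross terms), whereas the paper simply records everything as $\calO(e^{-3\sqrt{\omega_*}v_*t})$.
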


\begin{proof}[Proof of Lemma~\ref{expansion}]
  First note that, according to the definition~\eqref{eq:Stilde} of $\mathcal S$, 
  \begin{multline}
    \label{eq:Stilde2}
    \mathcal S \begin{pmatrix}u_1\\u_2\end{pmatrix} = \sum_{j=1,2}
    \tilde S_{j}(u_{j})  + v_{j}\cdot \left( \calPlocj \begin{pmatrix}u_1\\u_2\end{pmatrix}- P(u_i)\right) \\
    + C_{1}(\beta,v_{1}) M(u_{1})+ C_{2}(\beta,v_{2}) M(u_{2})
    -\frac{\beta}{2}\int_{\R} |u_{1}|^2|u_{2}|^2 dx,
  \end{multline}
  where $\tilde S_j$ denote the same functional as $S_j$ (see the
  definition~\eqref{eq:Si1}) with $\tilde
  \omega_j$ instead of $\omega_j$. 
  For $j=1,2$, let us expand $u_{j}(t)=\tildR_{j}(t)+\eps_{j}(t)$ in the
  expression of $\tilde  S_{j}$. 
  A simple computation leads to:
  \[
  \tilde S_{j}(\tildR_{j}+\eps_{j})=\tilde S_{j}(\tildR_{j})+\tilde
  S'_{j}(\tildR_{j})\eps_{j}+\dual{\tilde
    S''_{j}(\tildR_{j})\eps_{j}}{\eps_{j}}+\mathcal O(\norm{\eps_j}_{H^1}^3)
  \]
  Now, as $\tildR_{j}$ is a critical point of the scalar functional
  $\tilde S_j$, we have
  \[
  \tilde S'_{j}(\tilde R_{j})=0,
  \]
  and thus 
  \begin{equation}
    \label{eq:Si}
    \tilde S_{j}(\tildR_{j}+\eps_{j})=\tilde S_{j}(\tildR_{j})+\dual{\tilde S''_{j}(\tildR_{j})\eps_{j}}{\eps_{j}}+\mathcal O(\norm{\eps_j}_{H^1}^3),
  \end{equation}
  where 
  \begin{multline}
    \label{crochet}
    \dual{\tilde S''_{j}(\tildR_{j})\eps_{j}}{\eps_{j}}=\frac{1}{2}\norm{\partial_{x}\eps_{j}}_{L^2}^2+\frac{1}{2}\left(\tilde\om_{j}+\frac{v_{j}^2}{4}\right)\norm{\eps_{j}}_{L^2}^2+\frac{1}{2}v_{j}\Im \int_{\R} \eps_{j}\overline{\partial_{x}\eps_{j}}dx \\-\frac{\mu_{j}}{2}\Re \int_{\R} \eps_{j}^2\overline{\tildR_{j}^2}dx-\mu_{j}\int_{\R}|\eps_{j}|^2|\tildR_{j}|^2 dx.
  \end{multline}
  Let us now develop the remaining terms in~\eqref{eq:Stilde2}. As far as the momentum part is concerned, we write the expansion for $j=1$ for simplicity:
  \begin{multline}
    \label{momentum-expansion}
    \calPlocu \begin{pmatrix}\tildR_{1}+\eps_{1}\\\tildR_{2}+\eps_{2}\end{pmatrix}
    - P(\tildR_{1}+\eps_{1})
    \\
    \shoveleft=-\Im \int_{\R} \tildR_{1}\overline{\partial_{x}\tildR_{1}}\chi_{L}^2dx
    +\Im \int_{\R} \tildR_{2}\overline{\partial_{x}\tildR_{2}}\chi_{L}^1dx\\
    -\Im \int_{\R} \left(\tildR_{1}\overline{\partial_{x}\eps_{1}}+\eps_{1}\overline{\partial_{x}\tildR_{1}}\right)\chi_{L}^2 dx
    +\Im \int_{\R} \left(\tildR_{2}\overline{\partial_{x}\eps_{2}}+\eps_{2}\overline{\partial_{x}\tildR_{2}}\right)\chi_{L}^1dx\\
    -\Im \int_{\R} \eps_{1}\overline{\partial_{x}\eps_{1}}\chi_{L}^2 dx +\Im \int_{\R} \eps_{2}\overline{\partial_{x}\eps_{2}}\chi_{L}^1dx.
  \end{multline}
  Concerning the $\beta$ coupling part in~\eqref{eq:Stilde2}, we get:
  \begin{multline}
    \label{betacoupling-expansion}
    \int_{\R} |\tildR_{1}+\eps_{1}|^2|\tildR_{2}+\eps_{2}|^2 dx 
    \\\shoveleft= \int_{\R} |\tildR_{1}|^2|\tildR_{2}|^2dx 
    +2\Re \int_{\R} \left(|\tildR_{1}|^2\overline{\tildR_{2}}\eps_{2}+|\tildR_{2}|^2\overline{\tildR_{1}}\eps_{1}\right)dx\\
    +\int_{\R} \left(\abs{\eps_{1}}^2\abs{\tildR_{2}}^2+\abs{\eps_{2}}^2\abs{\tildR_{1}}^2+ 4 \Re (\eps_{1}\tildR_{1})\Re (\eps_{2}\tildR_{2})\right)dx\\
    +2 \int_{\R} \left(|\eps_{2}|^2\Re(\eps_{1}\overline{\tildR_{1}})+|\eps_{1}|^2\Re(\eps_{2}\overline{\tildR_{2}})\right)dx
    +\int_{\R} |\eps_{1}|^2|\eps_{2}|^2dx.
  \end{multline}
  Finally, the extra-masses terms in~\eqref{eq:Stilde2} expand into 
  \begin{equation}
    \label{mass-expansion}
    M(\tildR_{j}+\eps_{j})=\frac{1}{2}\norm{\tildR_{j}+\eps_{j}}_{L^2}^2=M(\tildR_{j})+ \psld{\tildR_{j}}{\eps_{j}} +M(\eps_{j})=M(\tildR_{j})+ M(\eps_{j}),
  \end{equation}
  where we have used the orthogonality conditions~\eqref{eq:orth} to
  obtain the last equality.
  In~\eqref{momentum-expansion} and~\eqref{betacoupling-expansion},
  all terms containing a product of solitons or cut-off functions with
  different indices/exponents are of order $\mathcal
  O(e^{-3\sqrt{\omega_*}v_*t})$ by Lemma
~\ref{localisation-estimates}. The terms containing a degree $3$ or
  higher term in $(\eps_1,\eps_2)$ are also  of order $\mathcal
  O(e^{-3\sqrt{\omega_*}v_*t})$ by the bootstrap assumption~\eqref{eq:BH1-bis}.
  Therefore, 
  gathering~\eqref{eq:Stilde2}-\eqref{eq:Si}-\eqref{crochet}-\eqref{momentum-expansion}-\eqref{betacoupling-expansion}-\eqref{mass-expansion} together gives: 
  \[
  \mathcal
  S \begin{pmatrix}\tildR_1+\eps_{1}\\\tildR_{2}+\eps_{2}\end{pmatrix}
  = \mathcal S \begin{pmatrix}\tildR_1\\\tildR_{2}\end{pmatrix}
  +\mathcal H_{\mathrm{free}}\begin{pmatrix}\eps_{1}\\\eps_{2}\end{pmatrix} +\mathcal
  H_{\mathrm{coupled}}\begin{pmatrix}\eps_{1}\\\eps_{2}\end{pmatrix}
  +\mathcal
  O(e^{-3\sqrt{\omega_*}v_*t})
  .
  \]
  This
  concludes the proof.
\end{proof}

\begin{lemma}[Coercivity of $\mathcal H$]
  There exists $\lambda>0$   such that, for any $t_{0}\in [T_{0},T^n]$ and for all $t\in [t_{0},T^n]$:
  \begin{gather}
    \label{Hfree-coercivity}
    \mathcal H_{\mathrm{free}}\begin{pmatrix}\eps_{1}\\\eps_{2}\end{pmatrix} \geq 2\lambda \left(\norm{\eps_{1}}^2_{H^1}+\norm{\eps_2}_{H^1}^2\right),\\
    \label{Hcoupled-coercivity}
    \mathcal H_{\mathrm{coupled}}\begin{pmatrix}\eps_{1}\\\eps_{2}\end{pmatrix} \geq \lambda \left(\norm{\eps_{1}}_{L^2}^2+\norm{\eps_{2}}_{L^2}^2\right)-\lambda\left(\norm{\partial_{x}\eps_{1}}_{L^2}^2+\norm{\partial_{x}\eps_{2}}_{L^2}^2\right),
  \end{gather}
  and thus:  
  \begin{equation}
    \label{coercivity}
    \mathcal H\begin{pmatrix}\eps_{1}\\\eps_{2}\end{pmatrix} \geq \lambda
    \left\|\begin{pmatrix}\eps_{1}\\\eps_{2}\end{pmatrix} \right\|_{H^1\times
      H^1}^2.
  \end{equation}
\end{lemma}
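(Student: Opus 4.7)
The plan is to prove \eqref{Hfree-coercivity} and \eqref{Hcoupled-coercivity} independently; the combined bound \eqref{coercivity} then follows at once from their sum, since $2\lambda\|(\eps_1,\eps_2)\|_{\hhu}^2+\lambda(\|\eps_1\|_{L^2}^2+\|\eps_2\|_{L^2}^2)-\lambda(\|\partial_x\eps_1\|_{L^2}^2+\|\partial_x\eps_2\|_{L^2}^2)\geq\lambda\|(\eps_1,\eps_2)\|_{\hhu}^2$.

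I would dispose of \eqref{Hcoupled-coercivity} first, since it is purely mechanical. In $\mathcal H_{\mathrm{coupled}}$ the only manifestly nonnegative contributions are the two $C_j(\beta,v_j)\|\eps_{3-j}\|_{L^2}^2$ terms. Thanks to the explicit sech-profile and the scaling \eqref{eq:snls-basic-intro}, combined with the uniform control on $\tilde\om_j$ from Lemma~\ref{lem:modulation}, one has $\|\tilde R_j\|_{L^\infty}^2\leq C\tilde\om_j/\mu_j$, so $\left|\tfrac{\beta}{2}\int_\R|\eps_k|^2|\tilde R_j|^2\,dx\right|\leq C(\beta,\om_j,\mu_j)\|\eps_k\|_{L^2}^2$. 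Young's inequality on the cross momenta yields $\left|\tfrac{v_k}{2}\Im\int_\R\eps_j\overline{\partial_x\eps_j}\chi_L^k\,dx\right|\leq\lambda\|\partial_x\eps_j\|_{L^2}^2+\tfrac{v_k^2}{16\lambda}\|\eps_j\|_{L^2}^2$. Taking $C_j(\beta,v_j)$ strictly larger than the sum of the resulting $\|\eps_{3-j}\|_{L^2}^2$-coefficients plus $\lambda$ yields \eqref{Hcoupled-coercivity}. This is precisely where the freedom of choice of $C_j$ in the definition \eqref{eq:Stilde} plays its role, as announced in the introductory remark.

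For \eqref{Hfree-coercivity}, my approach is a Galilean change of variables: set $\tilde\eps_j(t,x):=e^{-i(v_jx/2+\tilde\gamma_j(t))}\eps_j(t,x)$. The completed-Galilean-square identity $|\partial_x\eps_j|^2+\tfrac{v_j^2}{4}|\eps_j|^2+v_j\Im(\eps_j\overline{\partial_x\eps_j})=|\partial_x\tilde\eps_j|^2$, together with the phase-invariance of $|\tilde R_j|^2$ and $\eps_j^2\overline{\tilde R_j^2}$, leads after a direct computation to
\begin{equation*}
\mathcal H_{\mathrm{free},j}(\eps_j)=\dual{L_j\tilde\eps_j}{\tilde\eps_j}+\tfrac{v_j^2}{4}\int_\R\chi_L^{3-j}|\tilde\eps_j|^2\,dx-\tfrac{v_j}{2}\Im\int_\R\chi_L^{3-j}\tilde\eps_j\overline{\partial_x\tilde\eps_j}\,dx,
\end{equation*}
where $L_j$ is the stationary scalar Hessian at $q_j(\cdot-\tilde x_j)/\sqrt{\mu_j}$ associated with $E(\cdot,\mu_j)+\tilde\om_j M(\cdot)$. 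The orthogonality conditions \eqref{eq:orth} transfer (the first two being used to cancel the Galilean-phase contribution in the third) into $\tilde\eps_j\perp q_j(\cdot-\tilde x_j),\ iq_j(\cdot-\tilde x_j),\ \partial_x q_j(\cdot-\tilde x_j)$ in $L^2$. Writing $\tilde\eps_j=a+ib$, one splits $\dual{L_j\tilde\eps_j}{\tilde\eps_j}=\tfrac12\dual{L_+a}{a}+\tfrac12\dual{L_-b}{b}$ with $L_\pm=-\partial_{xx}+\tilde\om_j-(2\pm1)q_j^2$; Weinstein's classical coercivity then gives $\dual{L_j\tilde\eps_j}{\tilde\eps_j}\geq\lambda_0\|\tilde\eps_j\|_{H^1}^2$ for some explicit $\lambda_0>\tfrac14$. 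Finally, the sharp Young bound $\left|\tfrac{v_j}{2}\Im\int\chi_L^{3-j}\tilde\eps_j\overline{\partial_x\tilde\eps_j}\right|\leq\tfrac{v_j^2}{4}\int\chi_L^{3-j}|\tilde\eps_j|^2+\tfrac14\int\chi_L^{3-j}|\partial_x\tilde\eps_j|^2$ exactly cancels the positive $\tfrac{v_j^2}{4}$-correction, leaving at most $\tfrac14\|\partial_x\tilde\eps_j\|_{L^2}^2$ of kinetic loss, absorbed into the Weinstein bound since $\lambda_0>\tfrac14$. The equivalence of $\|\eps_j\|_{H^1}$ and $\|\tilde\eps_j\|_{H^1}$ (with $v_j$-dependent constants) then produces \eqref{Hfree-coercivity}.

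The main obstacle will be verifying the strict inequality $\lambda_0>\tfrac14$, which is what keeps the absorbing step in the last display from being tight. It requires a quantitative spectral analysis of the Pöschl--Teller operators $L_\pm$ using the explicit form $q_j(x)=\sqrt{2\tilde\om_j}\operatorname{sech}(\sqrt{\tilde\om_j}x)$, together with the lower bound $\tilde\om_j\geq1152\om_*$ from Remark~\ref{remarque-modulation} (which makes the Galilean corrections small after rescaling). A secondary, more routine point is to verify that the nonlinear $q_j^2$-contributions to the Hessian density on $\operatorname{supp}(\chi_L^{3-j})$ are exponentially small, as guaranteed by Lemma~\ref{localisation-estimates} once $T_0$ is chosen large enough relative to $L$.
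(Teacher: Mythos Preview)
Your treatment of \eqref{Hcoupled-coercivity} coincides with the paper's: bound $\|\tilde R_j\|_{L^\infty}$, apply Young's inequality to the cross-momentum terms, and absorb everything by taking $C_j(\beta,v_j)$ large. The paper does exactly this and omits \eqref{Hfree-coercivity} entirely, citing it as classical (\cite{IaLe14,MaMe06}); so your attempt to spell out \eqref{Hfree-coercivity} is the only place where a comparison is meaningful.

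There the route you propose has a genuine gap. The required inequality $\lambda_0>\tfrac14$ for the Weinstein $H^1$-coercivity constant is \emph{not} available in general. Since
\[
\dual{L_j\tilde\eps_j}{\tilde\eps_j}=\tfrac12\norm{\partial_x\tilde\eps_j}_{L^2}^2+\tfrac{\tilde\om_j}{2}\norm{\tilde\eps_j}_{L^2}^2-\int V_j,
\]
one always has $\lambda_0\le\min(\tfrac12,\tfrac{\tilde\om_j}{2})$, and in fact $\lambda_0\to 0$ as $\tilde\om_j\to 0$ (rescale $y=\sqrt{\tilde\om_j}\,x$ and test against a fixed profile: the $H^1$ norm is dominated by its $L^2$ part, while $\dual{L_j\cdot}{\cdot}$ scales like $\sqrt{\tilde\om_j}$). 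The lower bound $\tilde\om_j\ge 1152\om_*=\tfrac12\min\{\om_1,\om_2\}$ from Remark~\ref{remarque-modulation} is not an \emph{absolute} bound, since $\om_1,\om_2>0$ are free parameters in Theorem~\ref{theorem}; and rescaling cannot help, because the coefficient $\tfrac14$ in your kinetic loss $\tfrac14\int\chi_L^{3-j}|\partial_x\tilde\eps_j|^2$ is scale-invariant. So the parenthetical ``which makes the Galilean corrections small after rescaling'' is incorrect, and the ``main obstacle'' you identify is in fact an obstruction.

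The fix is to promote what you called the \emph{secondary} point to primary. The loss term is supported on $\operatorname{supp}(\chi_L^{3-j})$, where by Lemma~\ref{localisation-estimates} the potential density $V_j$ is $O(e^{-3\sqrt{\omega_*}v_*t})$. Hence on that region the Hessian density reduces (up to an exponentially small error) to the free form $\tfrac12|\partial_x\tilde\eps_j|^2+\tfrac{\tilde\om_j}{2}|\tilde\eps_j|^2$, which absorbs the localized loss $\tfrac14\chi_L^{3-j}|\partial_x\tilde\eps_j|^2$ \emph{pointwise}, regardless of the size of the global Weinstein constant. Equivalently, one may simply move the correction $-\tfrac{v_j}{2}\Im\int\chi_L^{3-j}\eps_j\overline{\partial_x\eps_j}$ out of $\mathcal H_{\mathrm{free}}$ and into $\mathcal H_{\mathrm{coupled}}$, where it is handled by the same Young-plus-large-$C_j$ mechanism you already use; then $\mathcal H_{\mathrm{free}}$ becomes the genuine scalar Hessian $\dual{\tilde S_j''(\tilde R_j)\eps_j}{\eps_j}$ and its coercivity \emph{is} classical. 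Either way, no quantitative P\"oschl--Teller spectral analysis is needed.
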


\begin{proof}
  The proof of~\eqref{Hfree-coercivity} is 
  classical (see for example~\cite{IaLe14, MaMe06}) and we omit it.
  It remains to prove~\eqref{Hcoupled-coercivity}. It is readily seen
  that for $C_j(\beta,v_{j})$ large enough, we have
  \begin{multline*}
    C_{1}(\beta,v_{1})\norm{\eps_{2}}_{L^2}^2+C_{2}(\beta,v_{2})\norm{\eps_{1}}_{L^2}^2-\frac{\beta}{2}\int_{\R}\left(\abs{\eps_{1}}^2\abs{\tildR_{2}}^2+\abs{\eps_{2}}^2\abs{\tildR_{1}}^2\right)
    dx\\\geq \frac12\left(
      C_{1}(\beta,v_{1})\norm{\eps_{2}}_{L^2}^2+C_{2}(\beta,v_{2})\norm{\eps_{1}}_{L^2}^2\right).
  \end{multline*}
  As far as the momentum parts of $\mathcal H_{\mathrm{coupled}}$ are concerned,
  for example for $j=1$:
  \begin{align*}
    \frac{v_{1}}{2}\Im \int_{R}
    \eps_{1}\overline{\partial_{x}\eps_{1}}\chi_{L}^2 dx
    &\geq -\frac{v_{1}}{2}\int_{\R} \abs{\eps_{1}\partial_{x}\eps_{1}\chi_{L}^2} dx\\
    & \geq -\frac{v_{1}}{2} \norm{\eps_{1}}_{2}\norm{\partial_{x}\eps_{1}}_{2}\\
    &\geq
    -\frac{v_1^2}{4\lambda}\norm{\eps_{1}}_{L^2}^2-\lambda\norm{\partial_{x}\eps_{1}}_{L^2}^2.
  \end{align*}
  For $C(\beta,v_{j})$, $j=1,2$ large enough, i.e such that
  \[
  \sum_{j=1,2}
  \left(\frac12C(\beta,v_{j})-\frac{v_j^2}{4\lambda}\right)>\lambda,
  \]
  we thus have:
  \[
  \mathcal H_{\mathrm{coupled}}\begin{pmatrix}\eps_{1}\\\eps_{2}\end{pmatrix} \geq \lambda
  \left(\norm{\eps_{1}}_{L^2}^2+\norm{\eps_{2}}_{L^2}^2\right)-\lambda\left(\norm{\partial_{x}\eps_{1}}_{L^2}^2+\norm{\partial_{x}\eps_{2}}_{L^2}^2\right).
  \]
  Combining estimates~\eqref{Hfree-coercivity} and
~\eqref{Hcoupled-coercivity} on $\mathcal H_{\mathrm{free}}$ and
  $\mathcal H_{\mathrm{coupled}}$ finally gives~\eqref{coercivity}.
\end{proof}

\subsection{Almost-conservation of the localized momentum}

In this section, we investigate the conservation of the localized momentum and, inspired by~\cite{MaMe06}, we state the following lemma:
\begin{lemma}
  \label{conservation-momentum}
  There exists $C>0$ (independent of $L$)
  such that if $L$ and $T_{0}$ are large enough then for all $t\in
  [t_{0},T^{n}]$ and for $j=1,2$ we have
  \begin{equation}
    \label{cons-momentum}
    \left|\calPlocj \begin{pmatrix}u_1\\u_2\end{pmatrix}(t) - \calPlocj \begin{pmatrix}u_1\\u_2\end{pmatrix}(T^{n}) \right|\leq \frac{C}{L} e^{-2\sqrt{\omega_*}v_* t}.
  \end{equation}
\end{lemma}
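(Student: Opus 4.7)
My plan is to compute $\frac{d}{dt}\calPlocj$ directly, use~\eqref{eq:nls} to substitute for $\partial_t u_i$, and integrate by parts so that every $x$-derivative ends up on the cut-off $\chi_L^j$. Since $\chi_L^j(x)=\chi(\pm x/L)$ up to an additive constant, each derivative on $\chi_L^j$ produces a factor $1/L$ and confines the integrand to $[-L,L]$. This is where the prefactor $1/L$ in~\eqref{cons-momentum} will come from.

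The crucial structural observation is that the total momentum $\mathcal{P}=\calPlocu+\calPlocd$ is conserved by the flow of~\eqref{eq:nls}. Consequently, after using the equations and integrating by parts, all the bulk contributions to $\frac{d}{dt}\calPlocj$ must cancel (otherwise they would persist in $\frac{d}{dt}\mathcal{P}$), and what survives has the schematic form
\[
\frac{d}{dt}\calPlocj(t)=\frac{1}{L}\int_{\R} F(u_1,u_2,\partial_x u_1,\partial_x u_2)\,\chi'(x/L)\,dx+\frac{1}{L^2}\int_{\R} G(u_1,u_2)\,\chi''(x/L)\,dx,
\]
where $F$ is polynomial of degree at most $4$ in $(u_1,u_2)$ and at most $2$ in $(\partial_x u_1,\partial_x u_2)$, and $G$ is polynomial of degree at most $4$ in $(u_1,u_2)$.

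I then decompose $u_i=\tildR_i+\eps_i$ inside these integrals. On the supports of $\chi'(\cdot/L)$ and $\chi''(\cdot/L)$, contained in $[-L,L]$, Remark~\ref{remarque-modulation} guarantees $\tildx_i(t)\geq\frac{v_*}{2\sqrt{2}}t>2L$ for $T_0$ large, so the argument of Lemma~\ref{localisation-estimates} applies and gives $|\tildR_i(t,x)|+|\partial_x\tildR_i(t,x)|\leq C\,e^{-3\sqrt{\omega_*}v_*t}e^{-\sqrt{\omega_*}|x|}$ on that region. Therefore every monomial carrying at least one $\tildR$-factor contributes $\calO(e^{-3\sqrt{\omega_*}v_*t})$, while the pure $\eps$-monomials are controlled via the Sobolev embedding $\hu\hookrightarrow L^\infty(\R)$ and the bootstrap assumption~\eqref{eq:BH1-bis} by a constant times $\|(\eps_1,\eps_2)\|_{\hhu}^2\leq e^{-2\sqrt{\omega_*}v_*t}$. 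Combining these bounds yields
\[
\left|\frac{d}{dt}\calPlocj(t)\right|\leq\frac{C}{L}\,e^{-2\sqrt{\omega_*}v_*t},
\]
and integrating from $t$ to $T^n$ (absorbing the constant $1/(2\sqrt{\omega_*}v_*)$ into $C$) produces~\eqref{cons-momentum}.

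The main obstacle I anticipate is the bookkeeping of all terms generated by the integrations by parts, in particular handling the cross-coupling contributions $\beta|u_{3-i}|^2 u_i\overline{\partial_x u_i}$ coming from the nonlinearities in~\eqref{eq:nls}. Component by component, these terms enjoy no momentum-conservation cancellation; the cancellation is restored only after summing both equations (where the common structure $\int|u_1|^2|u_2|^2\,dx$ appears). One must therefore keep both components grouped together during the manipulations, so as to guarantee that the remainders really do carry a derivative of $\chi_L^j$ and hence the desired $1/L$ gain, rather than an $\calO(1)$ leftover that would break the estimate.
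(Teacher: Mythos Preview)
Your approach is correct and coincides with the paper's: differentiate $\calPlocj$, substitute via~\eqref{eq:nls}, integrate by parts onto the cut-off, decompose $u_i=\tildR_i+\eps_i$, estimate using Lemma~\ref{localisation-estimates} and the bootstrap hypothesis, then integrate from $t$ to $T^n$. Your insistence on treating both components of $\calPlocj$ together so that the $\beta$-coupling terms combine into $\tfrac{\beta}{2}\int_{\R}\partial_x(|u_1|^2|u_2|^2)\,\chi_L^j\,dx$ and thereby gain the factor $1/L$ after one further integration by parts is exactly the right point---and in fact more explicit than the paper's presentation, which writes out only the $u_1$ half of $\calPlocu$ and carries the coupling as the bulk term in~\eqref{eq:36}.
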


\begin{proof}[Proof of Lemma~\ref{conservation-momentum}]
  Let us prove the lemma in the case $j=1$, the case $j=2$ being
  perfectly similar. We first compute the time derivative of $\calPlocu$:
  \begin{equation}
    \label{dtPloc}
    \frac{d}{dt} \calPlocu (t)= \frac{1}{2}\Im \int_{\R} \partial_{t}u_{1}\partial_{x}\overline{u_{1}}\chi\left(\frac{x}{L}\right)dx + \frac{1}{2}\Im \int_{\R} u_{1} \partial_{x,t}\overline{u_{1}}\chi\left(\frac{x}{L}\right) dx.
  \end{equation}
  Let us call term $A$ and $B$ respectively the first and the second term of the right hand side in equality~\eqref{dtPloc}:
  \[
  A=\frac{1}{2}\Im
  \int_{\R} \partial_{t}u_{1}\partial_{x}\overline{u_{1}}\chi\left(\frac{x}{L}\right)dx,\qquad
  B=\frac{1}{2}\Im \int_{\R}
  u_{1} \partial_{x}\partial_{t}\overline{u_{1}}\chi\left(\frac{x}{L}\right)
  dx.
  \]
  Using the fact that $u(t)=\begin{pmatrix}u_1(t)\\u_2(t)\end{pmatrix}$
  is a solution to~\eqref{eq:nls}, we readily get (formally, but this
  will be justified when integrating):
  \begin{align}
    \label{dtudxu}
    &\partial_{t} u_{1} \partial_{x} \overline{u_{1}} = i\partial_{xx} u_{1} \partial_{x}\overline{u_{1}}+i\mu_{1} \abs{u_{1}}^2u_{1}\partial_{x}\overline{u_{1}}+i\beta \abs{u_{2}}^2 u_{1}\partial_{x}\overline{u_{1}}\\
    \label{dxdtu}
    &\partial_{x}\partial_{t} \overline{u_{1}} = -i\partial_{xxx} \overline{u_{1}} - i \mu_{1}\partial_{x}\left(\abs{u_{1}}^2 \overline{u_{1}}\right)-i\beta \partial_{x}\left(\abs{u_{2}}^2\overline{u_{1}}\right).
  \end{align} 
  \textit{About term A:} Equation~\eqref{dtudxu} provides us with the following decomposition of term $A$:
  \begin{multline*}
    A=\frac{1}{2}\Re \int_{\R} \partial_{xx}u_{1}\partial_{x}\overline{u_{1}}\chi\left(\frac{x}{L}\right)dx + \frac{1}{2}\mu_{1}\Re \int_{\R} \abs{u_{1}}^2u_{1}\partial_{x}\overline{u_{1}}\chi\left(\frac{x}{L}\right)dx \\+ \frac{1}{2}\beta\Re \int_{\R} \abs{u_{2}}^2u_{1}\partial_{x}\overline{u_{1}}\chi\left(\frac{x}{L}\right)dx.
  \end{multline*}
  Integrating by part in each term of $A$ finally gives:
  \begin{multline}
    \label{finalA}
    A=-\frac{1}{4L}\int_{\R} \abs{\partial_{x}u_{1}}^2\chi'\left(\frac{x}{L}\right)dx-\frac{1}{8L}\mu_{1}\int_{\R} \abs{u_{1}}^4 \chi'\left(\frac{x}{L}\right)dx \\+  \frac{1}{2}\beta\Re \int_{\R} \abs{u_{2}}^2u_{1}\partial_{x}\overline{u_{1}}\chi\left(\frac{x}{L}\right)dx.
  \end{multline} 
  \textit{About term B:} Equation~\eqref{dxdtu} provides us with the following decomposition of term $B$: 
  \begin{multline*}
   B=-\frac{1}{2}\Re \int_{\R} u_{1}\partial_{xxx}\overline{u_{1}}\chi\left(\frac{x}{L}\right)dx -\frac{1}{2}\mu_{1}\Re\int_{\R}u_{1}\partial_{x}\left(\abs{u_{1}}^2\overline{u_{1}}\right)\chi\left(\frac{x}{L}\right)dx\\- \frac{1}{2}\beta\Re \int_{\R} u_{1}\partial_{x}\left(\abs{u_{2}}^2\overline{u_{1}}\right)\chi\left(\frac{x}{L}\right)dx.
  \end{multline*}
  As for the $A$ term, integrating by parts (several times if necessary) in each term finally leads to: 
  \begin{multline}
    \label{finalB}
    B=-\frac{3}{4L}\int_{\R}\abs{\partial_{x}u_{1}}^2\chi'\left(\frac{x}{L}\right)dx+\frac{1}{4L^3}\int_{\R}\abs{u_{1}}^2\chi'''\left(\frac{x}{L}\right)dx
    \\+\frac{3}{8L}\mu_{1}\int_{\R} \abs{u_{1}}^4\chi'\left(\frac{x}{L}\right)dx
    \\+\frac{1}{2}\beta\Re \int_{\R} \abs{u_{2}}^2u_{1}\partial_{x}\overline{u_{1}}\chi\left(\frac{x}{L}\right)dx +\frac{1}{2L}\beta\Re\int_{\R} \abs{u_{2}}^2\abs{u_{1}}^2 \chi'\left(\frac{x}{L}\right)dx.
  \end{multline}
  Combining~\eqref{finalA} and~\eqref{finalB}, and using the fact that
  $\chi'$ and $\chi'''$ are supported on $[-L,L]$ lead to the following estimate: 
  \begin{equation}
    \label{eq:36}
    \abs*{\frac{d}{dt}\calPlocu(t)}\leq \frac{C}{L}\int_{-L}^L\left(\abs{\partial_{x}u_{1}}^2+\abs{u_{1}}^2+\abs{u_{1}}^4\right)dx + C \int_{\R}\abs{u_{2}}^2\left(\abs{u_{1}}^2+\abs{\partial_{x}u_{1}}^2\right) \chi_L^{1}(x) dx.
  \end{equation}
  Note that for $x\in[-L,L]$, $T_0$ large enough, and using~\eqref{eq:estmod} we have (see Lemma
~\ref{localisation-estimates} for similar arguments)
  \[
  |\partial_xR_j(t,x)+R_j(t,x)|\leq Ce^{-\sqrt{\tilde\omega_j}|x-\tilde x_j|}\leq
  Ce^{-\frac12\sqrt{\tilde\omega_j}|\tilde x_j|}\leq Ce^{-3\sqrt{\omega_*}v_*t}.
  \]
  Note that $C$ here depends on $v_j$.
  Expanding now $u_j=\tilde R_j+\eps_j$, in~\eqref{eq:36}, using the
  above estimate and Lemma~\ref{localisation-estimates}, we obtain
  \begin{equation*}
    \abs*{\frac{d}{dt}\calPlocu(t)}\leq
    \frac{C}{L}\norm*{\binom{\eps_1}{\eps_2}}_{H^1\times H^1}^2+\calO(e^{-3\sqrt{\omega_*}v_*t}).
  \end{equation*}
  The result
  follows integrating in time between $t$ and $T^{n}$ and using the
  bootstrap assumption~\eqref{eq:BH1-bis}. 
\end{proof}

\subsection{Control of the modulation parameters}

We now give an estimate of the variations of $\tildo_{1}$ with respect
to time. This estimate is better than~\eqref{eq:estmod} given by the modulation lemma. 

\begin{lemma}[Variations of $\tildo_{j}(t)$]
  \label{variation-omega1}  
  For $j=1,2$ and for all $t\in [t_{0},T^n]$, we have
  \[
  \abs{\tildo_{j}(t)-\omega_j}\leq
  C\norm{\eps_{j}(t)}_{L^2}^2.
  \]
\end{lemma}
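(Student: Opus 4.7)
The plan is to exploit the component-wise conservation of mass together with the orthogonality condition $(\eps_j,\tilde R_j)_2=0$ from Lemma~\ref{lem:modulation}. A direct computation on the system~\eqref{eq:nls} (taking the inner product of each equation with its own component and taking the imaginary part) shows that $M(u_j^n(t))$ is individually conserved: the coupling term $\beta|u_k|^2u_j$ contributes $\beta|u_k|^2|u_j|^2$, which is real. Since the scalar NLS also conserves mass, and since at $t=T^n$ we have $u_j^n(T^n)=R_j(T^n)$, it follows that
\begin{equation*}
M(u_j^n(t))=M(R_j(t))\quad\text{for all }t\in[t_0,T^n].
\end{equation*}

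Next, I would expand $M(u_j^n)=M(\tilde R_j+\eps_j)$. Using the orthogonality condition $(\eps_j,\tilde R_j)_2=0$, the cross term $\Re(\tilde R_j,\eps_j)_2$ vanishes, giving exactly
\begin{equation*}
M(R_j(t))=M(\tilde R_j(t))+M(\eps_j(t)).
\end{equation*}
Now using the scaling $Q_\omega(x)=\sqrt{\omega}\,Q_1(\sqrt{\omega}x)$ from~\eqref{eq:snls-basic-intro}, a change of variables yields $\|Q_\omega\|_{L^2}^2=\sqrt{\omega}\,\|Q_1\|_{L^2}^2$, and hence
\begin{equation*}
M(\tilde R_j)-M(R_j)=\frac{\|Q_1\|_{L^2}^2}{2\mu_j}\bigl(\sqrt{\tilde\omega_j}-\sqrt{\omega_j}\bigr).
\end{equation*}
Combining the last two displays,
\begin{equation*}
\sqrt{\tilde\omega_j(t)}-\sqrt{\omega_j}=-\frac{\mu_j}{\|Q_1\|_{L^2}^2}\,\|\eps_j(t)\|_{L^2}^2.
\end{equation*}

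Finally, since Remark~\ref{remarque-modulation} guarantees $\tilde\omega_j(t)\geq 1152\,\omega_*$ uniformly on $[t_0,T^n]$, the quantity $\sqrt{\tilde\omega_j(t)}+\sqrt{\omega_j}$ is bounded below by a positive constant, so the elementary identity
\begin{equation*}
\tilde\omega_j-\omega_j=\bigl(\sqrt{\tilde\omega_j}-\sqrt{\omega_j}\bigr)\bigl(\sqrt{\tilde\omega_j}+\sqrt{\omega_j}\bigr)
\end{equation*}
yields $|\tilde\omega_j(t)-\omega_j|\leq C\|\eps_j(t)\|_{L^2}^2$, as desired. The argument is essentially algebraic once the conservation of each $M_j$ and the first orthogonality condition are invoked; no obstacle worth mentioning is expected, the only minor point being that one should verify the positive lower bound on $\tilde\omega_j$ (which is indeed provided by Remark~\ref{remarque-modulation} once $T_0$ is chosen large enough).
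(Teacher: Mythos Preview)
Your approach is essentially the same as the paper's: both combine the component-wise mass conservation with the orthogonality $(\eps_j,\tilde R_j)_2=0$ to reduce to a comparison of $\norm{\tilde R_j}_{L^2}^2$ and $\norm{R_j}_{L^2}^2$. The paper then Taylor-expands $\norm{Q_\omega}_{L^2}^2$ in $\omega$ around $\omega_j$, whereas you use the explicit scaling $\norm{Q_\omega}_{L^2}^2=\sqrt{\omega}\,\norm{Q_1}_{L^2}^2$ to obtain an exact identity; your route is marginally cleaner since it avoids the $\calO(|\tilde\omega_j-\omega_j|^2)$ remainder.

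One small slip in the last step: to pass from $|\sqrt{\tilde\omega_j}-\sqrt{\omega_j}|$ to $|\tilde\omega_j-\omega_j|$ you need an \emph{upper} bound on $\sqrt{\tilde\omega_j}+\sqrt{\omega_j}$, not the lower bound you invoke from Remark~\ref{remarque-modulation}. This is harmless, since your own identity gives $\sqrt{\tilde\omega_j}\leq\sqrt{\omega_j}$ and hence $\sqrt{\tilde\omega_j}+\sqrt{\omega_j}\leq 2\sqrt{\omega_j}$, so the conclusion stands.
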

\begin{proof}
  This estimate is due to the choice of the modulation orthogonality
  condition $\psld{\eps_{j}(t)}{\tildR_{j}(t)}=0$, and the conservation of
  the mass of each component:
  \begin{multline*}
    0=\norm{u_j}_2^2-\norm{R_j}_2^2=\norm{\tilde
      R_j}_2^2-\norm{R_j}_2^2+\norm{\eps_j}_2^2\\
    =(\tilde\omega_j-\omega_j)\frac{\partial}{\partial\omega}_{|\omega=\omega_j}\norm{Q_\omega}_2^2+\calO(|\tilde\omega_j-\omega_j|^2)+\norm{\eps_j}_2^2.
  \end{multline*}
  Since
  $\frac{\partial}{\partial\omega}_{|\omega=\omega_j}\norm{Q_\omega}_2^2<0$,
  this concludes the proof.
\end{proof}

\subsection{Conclusion}

With the elements of the previous subsections in hand, we can now
conclude the proof of Proposition~\ref{bootstrap}.

\begin{proof}[End of the proof of Proposition~\ref{bootstrap}]
  Recall that we have made the bootstrap assumption~\eqref{eq:BH1-bis}
  and that our goal is to prove that for all $t\in[t_0,T^n]$ we have in fact the better
  estimate
  \begin{equation}
    \label{eq:37}
    \norm*{
      \begin{pmatrix}u_{1}^n\\u_{2}^n\end{pmatrix}
      -\begin{pmatrix}R_{1}\\R_{2}\end{pmatrix}
    }_{H^1\times H^1}
    \leq  \frac{1}{2} e^{-\sqrt{\omega_{*}}v_{*}t}.
  \end{equation}
  Let us first expand
  \begin{multline*}
    \norm*{      
      \begin{pmatrix}u_{1}^n(t)\\u_{2}^n(t)\end{pmatrix}
      -\begin{pmatrix}R_{1}(t)\\R_{2}(t)\end{pmatrix}
    }_{H^1\times H^1}
    \\
    \leq 
    \norm*{
      \begin{pmatrix}u_{1}^n(t)\\u_{2}^n(t)\end{pmatrix}
      -\begin{pmatrix}\tilde R_{1}(t)\\\tilde R_{2}(t)\end{pmatrix}
    }_{H^1\times H^1}
    +
    \norm*{
      \begin{pmatrix}\tilde R_{1}(t)\\\tilde R_{2}^n(t)\end{pmatrix}
      -\begin{pmatrix}R_{1}(t)\\R_{2}(t)\end{pmatrix}
    }_{H^1\times H^1}
    \\
    \leq 
    \norm*{
      \begin{pmatrix} \eps_{1}(t)\\\eps_{2}(t)\end{pmatrix}
    }_{H^1\times H^1}
    +C\sum_{j=1,2}|\tilde\omega_j-\omega_j|+\sum_{j=1,2}\calO(|\tilde\omega_j-\omega_j|^2).
  \end{multline*}
  By Lemma~\ref{variation-omega1}, the part involving
  $|\tilde\omega_j-\omega_j|$ is controlled by the $\eps$-part. Hence to
  finish the proof it is sufficient to control $\eps$.
  Using successively the coercivity of $\mathcal H$ given in~\eqref{coercivity},
  the expansion~\eqref{tildS:expansion} of the global action $
  \mathcal S$, the conservation of energy and mass, and the almost
  conservation of localized momenta~\eqref{cons-momentum}, we obtain
  \begin{multline*}
    \norm*{
      \begin{pmatrix} \eps_{1}(t)\\\eps_{2}(t)\end{pmatrix}
    }_{H^1\times H^1}^2
    \leq 
    C \mathcal H\begin{pmatrix} \eps_{1}(t)\\
      \eps_{2}(t) \end{pmatrix}
    \\
    \leq 
    \mathcal S
    \begin{pmatrix} u_1(t)\\u_2(t)\end{pmatrix}-
    \mathcal S
    \begin{pmatrix} \tilde R_1(t)\\\tilde R_2(t)\end{pmatrix}
    +\calO(e^{-3\sqrt{\omega_*}v_*t})\leq \frac{C}{L}e^{-2\sqrt{\omega_*}v_*t}.
  \end{multline*}
  Therefore, choosing $L$ large enough we obtain the required estimate
~\eqref{eq:37}. This concludes the proof of Proposition~\ref{bootstrap}.
\end{proof}

\section{Numerical schemes}
\label{sec:scheme}

We describe here the numerical methods that we will be using in the
next section.

\subsection{The time-splitting spectral method}
We start by
the time-splitting spectral method that we use to
solve~\eqref{eq:nls} numerically. The equations are solved on a bounded interval $I=(-a,a)$.
We use a uniform spatial grid with mesh size $h>0$ and grid points
$x_k=x_0+k h$, $k=0,\ldots,K$, where $K+1\in\N$ is the (odd) number of grid points. 
Then $h=2a/K$. The time grid is given by
$t_n=t_0+n\tau$, $n\in\N_0$, where $\tau>0$ is the time step size and $t_0$ the initial time. We set
$(u_j)_k^n:=u_j(t_n,x_k)$, where $j=1,2$, $k=0,\dots,K$, and $n\in\N_0$.
We split the system~\eqref{eq:nls} into the two subsystems
\begin{align}
  i\partial_tu_j &=  -\mu_{j}|u_j|^2u_j
  - \beta |u_{3-j}|^2u_j, \quad j=1,2,  \label{eqpotn1} \\
  i\partial_tu_j &= -\frac12\partial_{xx} u_j,  \quad j=1,2, \label{eqdelta} 
\end{align}
considered on $[t_n,t_{n+1}]$ and subject to some initial data.
These subsystems are solved as follows.

\begin{description}
\item[\rm\em Step 1] Computing the evolution of 
~\eqref{eqpotn1} we observe that the quantities $|u_j|^2$ remain unchanged. Therefore, we ``freeze''
  these values at time $t_n$ and solve the resulting linear ODEs exactly
  in the interval $[t_n,t_n+\tau/2]$, giving at time $t_n+\tau/2$:
  \begin{equation*}
    (u_1)_k^* = \exp\left(i\frac{\tau}{2}\Big(
      \mu_{1}|(u_1)_k^n|^2 + \beta|(u_2)_k^n|^2\Big)\right)
    (u_1)_k^n,
  \end{equation*}
  and analogously for $(u_2)_k^*$. 
\item[\rm\em Step 2] We solve~\eqref{eqdelta} for $j=1,2$ in the 
  interval $[t_n,t_{n}+\tau]$,
  discretized in space by the Fourier spectral method and solved exactly in time:
  \begin{equation*}
    (u_j)_k^{**} = \frac{1}{K+1}\sum_{m=-K/2}^{K/2}\exp\left(-i\tau\,
      \nu_m^2\right)(\widehat u_j)_m^*\exp\big(i\nu_m(x_k-x_0)\big),
    \quad j=1,2,
  \end{equation*}
  where $\nu_m = 2\pi m/(x_K-x_0)$ and
  \[
  (\widehat u_j)_m^* = \sum_{l=0}^{K}(u_j)_l^*\exp\left(-i\nu_m(x_l-x_0)\right),
  \quad m=-\frac{K}{2},\ldots,\frac{K}{2}.
  \]
\item[\rm\em Step 3] We solve~\eqref{eqpotn1} on $[t_n+\tau/2,t_{n+1}]$ 
  using the discretization of Step 1 with $(u_j)^{**}_k$ instead of $(u_j)^n_k$
  and obtain $(u_j)_k^{n+1}$.
\end{description}

\subsection{The normalized gradient flow}
We will also need to compute the ground state solution of the
following elliptic system with fixed masses

\begin{equation}
  \label{eq:ellsys}
  \left\{
    \begin{aligned}
      -\partial_{xx} \phi_1 + \omega_1 \phi_1 -\mu_1 \phi_1^3 -\beta \phi_2^2 \phi_1=0,\\
      -\partial_{xx} \phi_2 + \omega_2 \phi_2 -\mu_2 \phi_2^3 -\beta \phi_1^2 \phi_2=0.
    \end{aligned}
  \right.
\end{equation}
To that purpose, we use the normalized gradient flow.
The problem can also be viewed as a nonlinear eigenvalue problem with $\omega_1,\omega_2$ being the eigenvalues, which can be computed from the corresponding eigenfunctions ($j=1,2$):
\begin{equation*}
  \omega_j^{\phi}=\frac{\displaystyle\int_{\R}\left( -|\partial_{x} \phi_j|^2+\mu_j \phi_j^4+ \beta \phi_1^2 \phi_2^2\right)dx}{ \displaystyle\int_{\R}\phi_j^2 dx}.
\end{equation*}
We solve~\eqref{eq:ellsys} by normalized gradient flow with given $(a_1,a_2)$, such that: 
\begin{equation*}
 \int_{\R}\phi_1^2 dx=a_1^2 \quad \mbox{ and } \quad \int_{\R}\phi_2^2 dx=a_2^2.
\end{equation*}
The standard gradient flow with discrete normalization consists in introducing an imaginary time in the nonlinear Schr\"odinger equations, thus looking at the imaginary time propagation ($t\rightarrow- it$) and after every step project the solutions such that the $L^2$-norms are equal to $(a_1^2,a_2^2)$.
In~\cite{BaoDu03} the authors present the normalized gradient flow,
prove it is energy diminishing, and propose numerical methods to discretize it. Hereafter we adapted the normalized gradient flow for the given system and discretized it by a semi-implicit Backward Euler finite differences scheme.
For $(\phi_j)_{k}^n=\phi_j(t^n,x_k)$ being the discrete solution, $x_k=x_0+k \cdot h$ the grid points with $k=0,1,\dots,K-1,K$ and the time sequence $0<t_1<t_2<\dots<t_n<t_{n+1}<\dots$ with $\tau=t_{n+1}-t_n$ we have the following discretization of the normalized gradient flow:
\begin{description}
\item[\rm\em Step 1] 
  We first solve on $[t_n,t_{n+1}]$ with initial data $\phi_j(t_n,x_k)$:
  \begin{eqnarray*}
    \frac{(\phi_j)_{k}^{*}-(\phi_j)_{k}^{n}}{\tau} = \frac{(\phi_j)_{k+1}^{*}-2(\phi_j)_{k}^{*}+(\phi_j)_{k-1}^{*}}{h^2}+\mu_j |(\phi_j)_{k}^{n}|^2(\phi_j)_{k}^{*}+\beta  |(\phi_{3-j})_{k}^{n}|^2(\phi_j)_{k}^{*}
  \end{eqnarray*}
  with $j=\{1,2\}$.
  As solution we get $(\phi_j)_k^*$.

\item[\rm\em Step 2]
  $(\phi_j)_k^*$ is then normalized to get finally $(\phi_j)_k^{n+1}$
  \begin{equation*}
    (\phi_j)_k^{n+1}=\frac{a_k (\phi_j)_k^*}{\norm{ (\phi_j)_k^*}_{L^2}}.
  \end {equation*}
\end{description}
For $t\rightarrow +\infty$ we obtain the ground state solution
$(\phi_1,\phi_2)$ of~\eqref{eq:ellsys} with $L^2$-norms equal to
$(a_1^2,a_2^2)$ and frequency parameters $(\omega_1^{\phi_1},\omega_2^{\phi_2})$.

\section{Numerical experiments}
\label{sec:experiments}

Our ansatz for the initial data in the next experiments is the following. 
\begin{equation}\label{eq:initsol}
  u_j(t_0,x)=e^{i (\omega_j t_0 -v_j^2 t/4+ v_j x/2)} Q_{\omega_j}(x-v_j t_0-x_j) \quad j=1,2
\end{equation}
with $Q_{\omega}$  defined in~\eqref{eq:snls-basic-intro}. 
Without loss of generality, we may assume that $0<v_1=-v_2$ (Galilean
invariance), hence the soliton on the first component will be
traveling to the right and the soliton on the second component will
be traveling to the left. We will also assume that 
$x_1=x_2=0$ (invariance by translation in time and space) and to
guarantee that our solitons on the first and second components are positioned on the left and on the
right respectively and at a sufficiently large distance, we will chose
the initial time to be $t_0=-10$.

\subsection{Purely elastic interaction}
The integrable case has been studied in depth in the book~\cite{AbPrTr04}. In particular, in that case the system is completely integrable via the inverse scattering transform and explicit solutions may be exhibited. For example, there exists a solution $U=(u_1,u_2)$ of~\eqref{eq:nls} which has the following behavior. At time $t\to-\infty$, 
\begin{align}\label{eq:explicit-minus-infty}
  u_1(t,x)&\sim e^{i\left(\frac{1}{2}v_1x+\left(\omega_1-\frac{1}{4}|v_1|^2\right) t\right)}
  \sqrt{\omega_1}Q\left(\sqrt{\omega_1}\left(
      x-v_1t
    \right)\right),
  \\
  \nonumber u_2(t,x)&\sim e^{i\left(\frac{1}{2}v_2x+\left(\omega_2-\frac{1}{4}|v_2|^2\right) t\right)}
  \sqrt{\omega_2}Q\left(\sqrt{\omega_2}\left(
      x-v_2t
    \right)\right),
\end{align}
whereas at time $t\to+\infty$ the two components have interacted and the outcome are solitons of same speed and frequency but with a shift in phase and translation:
\begin{align}\label{eq:explicit-plus-infty}
  u_1(t,x)&\sim \hat{\phi}_1(t,x):=e^{i\theta_1}e^{i\left(\frac{1}{2}v_1x+\left(\omega_1-\frac{1}{4}|v_1|^2\right) t\right)}
  \sqrt{\omega_1}Q\left(\sqrt{\omega_1}\left(
      x-v_1t-\tau_1
    \right)\right),
  \\
  \nonumber u_2(t,x)&\sim \hat{\phi}_2(t,x):= e^{i\theta_2}e^{i\left(\frac{1}{2}v_2x+\left(\omega_2-\frac{1}{4}|v_2|^2\right) t\right)}
  \sqrt{\omega_2}Q\left(\sqrt{\omega_2}\left(
      x-v_2t-\tau_2
    \right)\right),
\end{align}
where the shift parameters are given by the formulas 
\begin{align*}
  \tau_1=-\frac{\ln(|\chi_1|)}{\sqrt{\omega_1}},\quad
  \tau_2=\frac{\ln(|\chi_2|)}{\sqrt{\omega_2}},\quad
  \theta_1=\frac{\chi_1}{|\chi_1|},\quad 
  \theta_2=\frac{\chi_2}{|\chi_2|},
  \\
  \chi_1=\frac{v_1-v_2+i2(\sqrt{\omega_1}+\sqrt{\omega_2})}{v_1-v_2+i2(\sqrt{\omega_1}-\sqrt{\omega_2})},\quad
  \chi_2=\frac{v_1-v_2+i2(\sqrt{\omega_1}+\sqrt{\omega_2})}{v_1-v_2-i2(\sqrt{\omega_1}-\sqrt{\omega_2})}.
\end{align*}
Note that since the speeds and amplitudes of the solitons are not modified the interaction is considered elastic. Note also that we have a pure two-speeds solitary wave at both ends of the time line, without any appearance of dispersion despite the interaction. This is a characteristic feature of completely integrable systems.

The parameters are chosen as follows: $\mu_1=\mu_2=\beta=1$, $\omega_1=5$, $\omega_2=1$, $v_1=1$, $v_2=-1$, and $x_1=x_2=0$, thus we start with two solitons located at $\pm10$, respectively, which move to each other and observe them until $t_{final}=10$.  Furthermore, the integration domain is $I=(-20,20)$ with $K=1024$ spatial grid points and $\tau=10^{-3}$.

\begin{figure}[!htbp]
  \includegraphics[width=60mm]{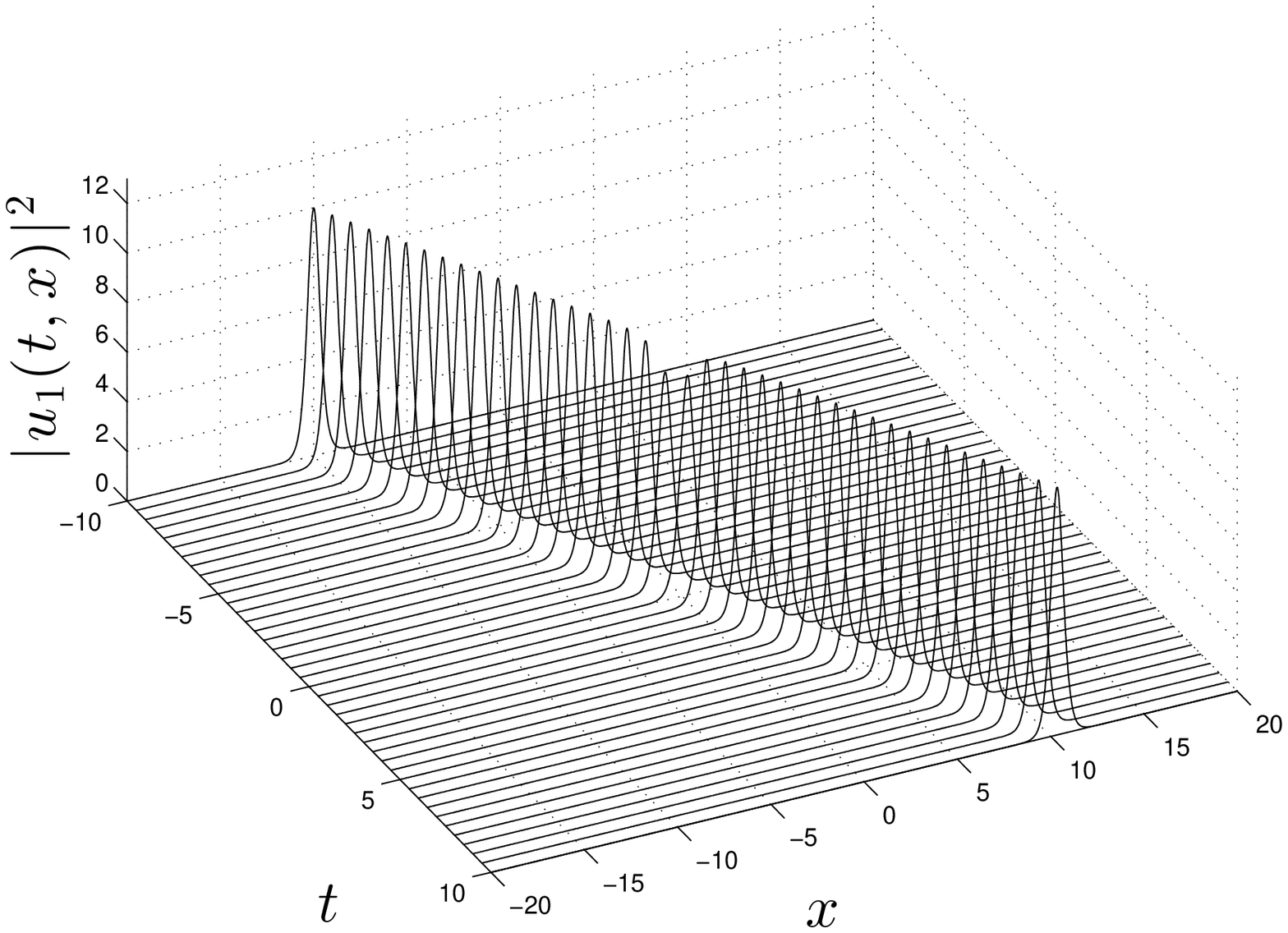}
  \includegraphics[width=60mm]{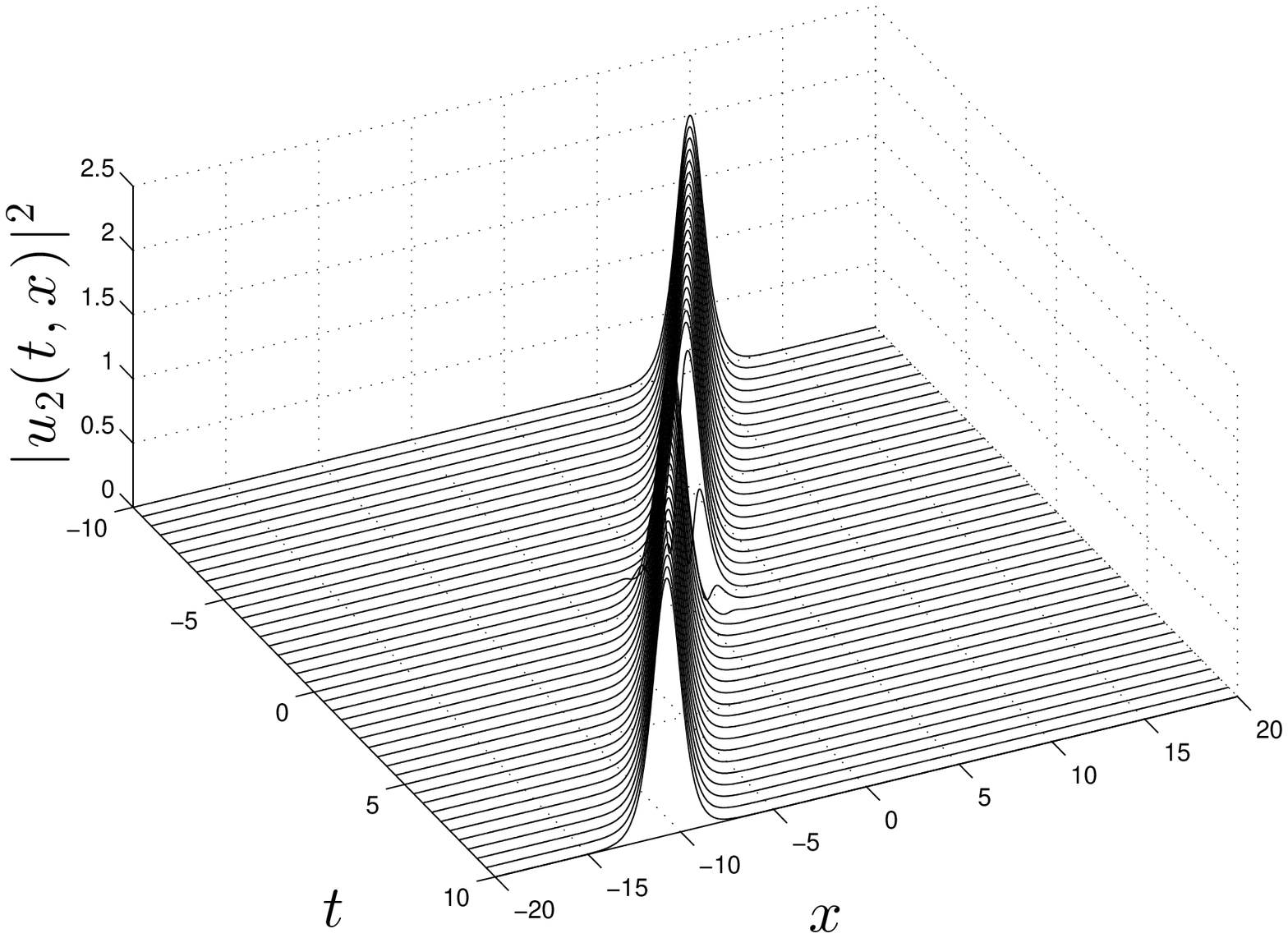}
  \caption{{\bf Purely elastic interaction:} Position densities $|u_1(t,x)|^2$ and $|u_2(t,x)|^2$ as functions of space and time for $\mu_1=\mu_2=1$ and $\beta=1$}
  \label{fig:ex1a}
\end{figure}

We plot the solution $(u_1,u_2)$ to~\eqref{eq:nls} and observe that
the two solitons remain unchanged after the interaction, excepting a
shift in phase and translation. The shift in translation can be
observed in Figure~\ref{fig:ex1a}. Our numerical experiments are in
good line with the theoretically predicted behavior. We have compared
pointwise the numerical solution $|u_1|^2$ obtained by taking
\eqref{eq:explicit-minus-infty} as initial data at $t=-10$ and the
theoretical outcome $|\hat{\phi}_1|^2$ given by
\eqref{eq:explicit-plus-infty}. The results are shown in Figure~\ref{fig:ex1b}.

\begin{figure}[!htbp]
  a) \includegraphics[width=55mm]{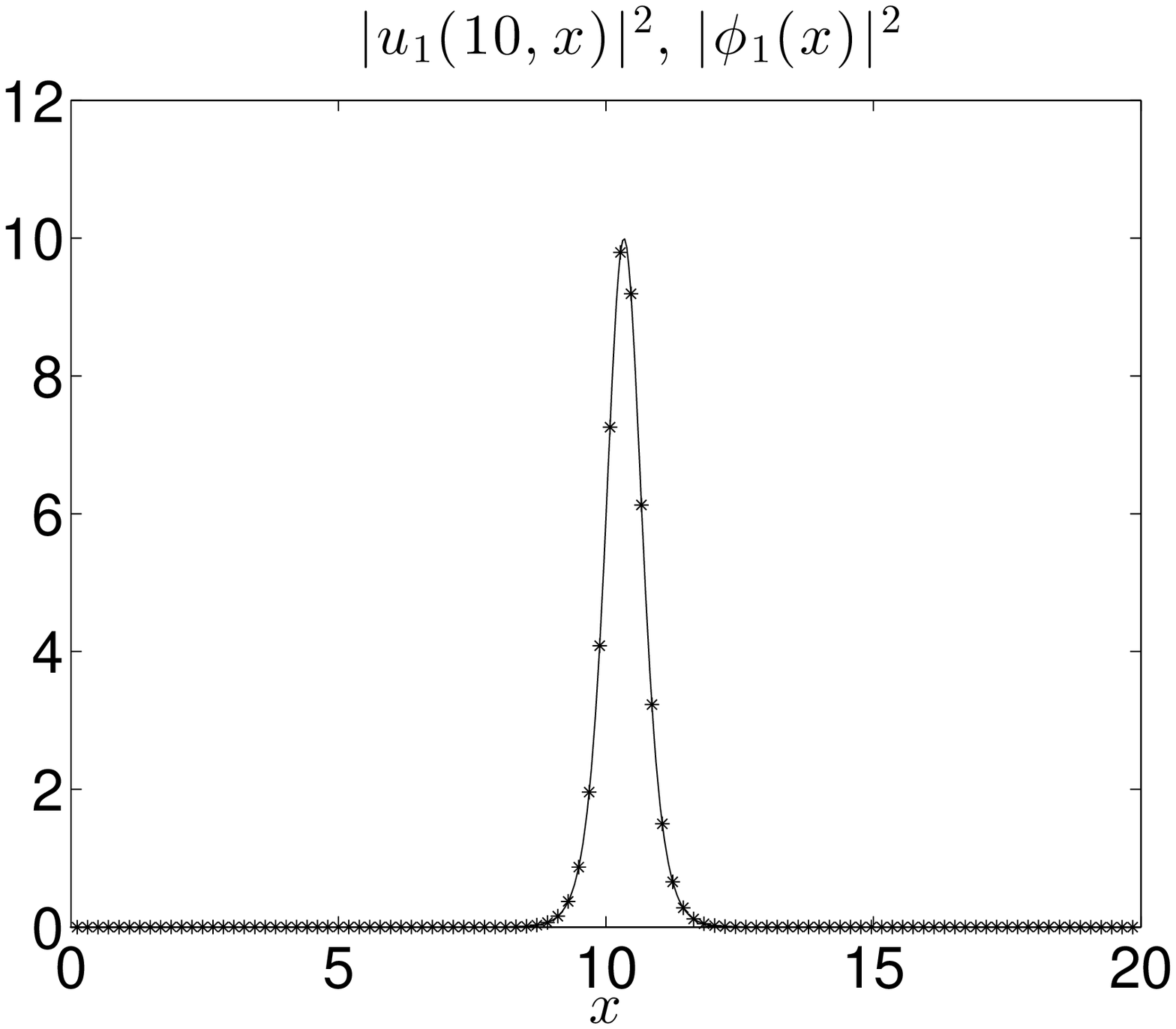}
  b) \includegraphics[width=55mm]{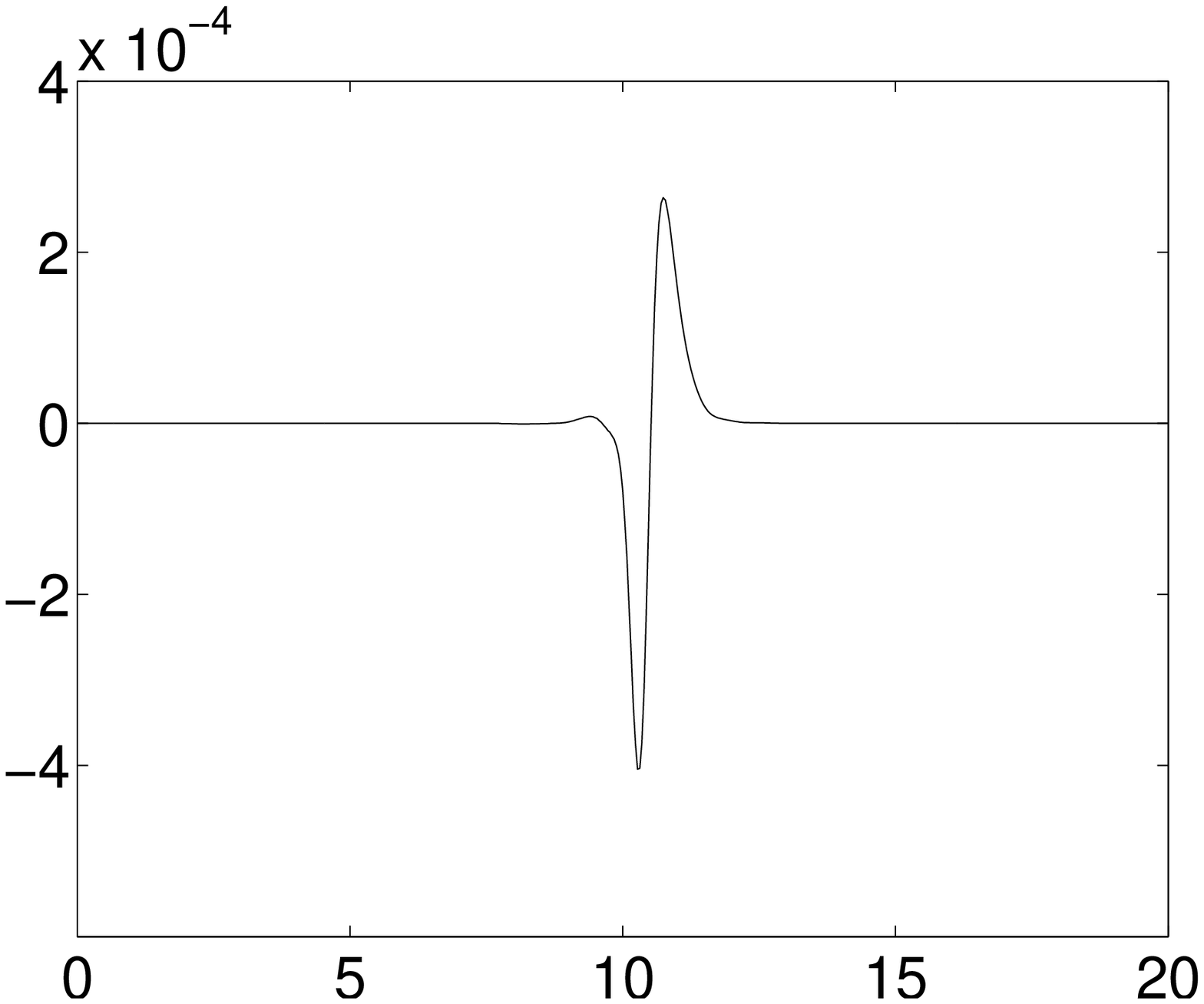}
  \caption{a) Plots of $|u_1(10,x)|^2$ ('-' line) and $|\hat{\phi}_1(x)|^2$ ('.' line). 
    b) Difference of $|u_1(10,x)|^2-|\hat{\phi}_1(x)|^2$ }
  \label{fig:ex1b}
\end{figure}

\subsection{Symmetric collision}
In this experiment the outcome are still multi-speed solitary
waves. After the collision we observe a small part of the soliton
moving along with the soliton of the other component. We see that at the final time the solution fits well the ground state solution obtained by minimizing the system with fixed masses.
Here we choose $\mu_1=\mu_2=1$ and $\beta=3$. We have solved the system~\eqref{eq:nls} 
in the interval $(-200,200)$ with periodic boundary conditions, initial time $t_0=-10$ and initial condition
\eqref{eq:initsol}
and the parameters  $\omega_1=\omega_2=1$, $v_1=2$, $v_2=-2$, and $x_1=x_2=0$, thus we start with two solitons located at $\pm10$, respectively, which move to each other and observe them till $t_{final}=40$.   Moreover, we use $K=4096$ in space and time step  $\tau=10^{-3}$.

\begin{figure}[!htbp]
  \includegraphics[width=60mm]{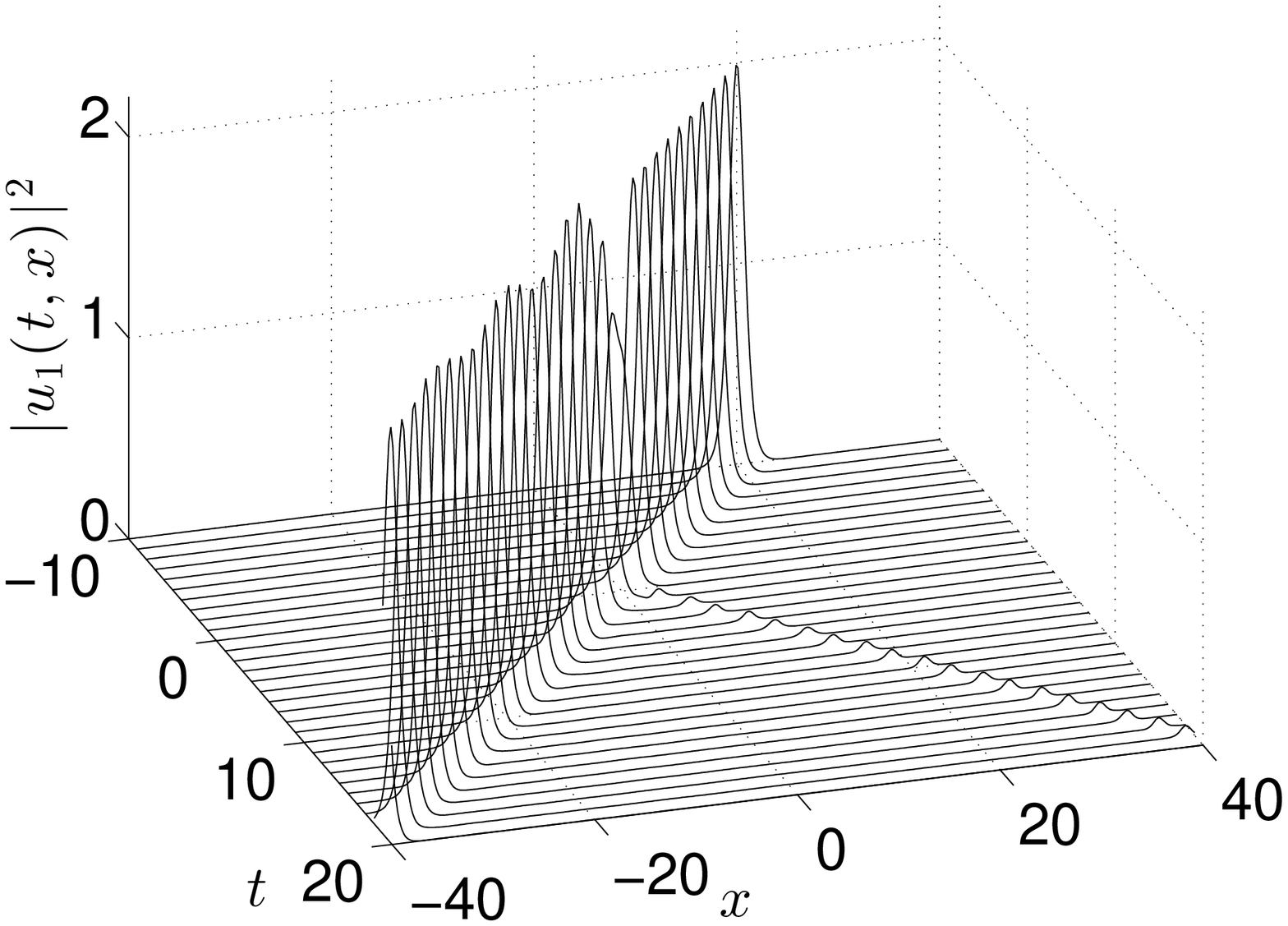}
  \includegraphics[width=60mm]{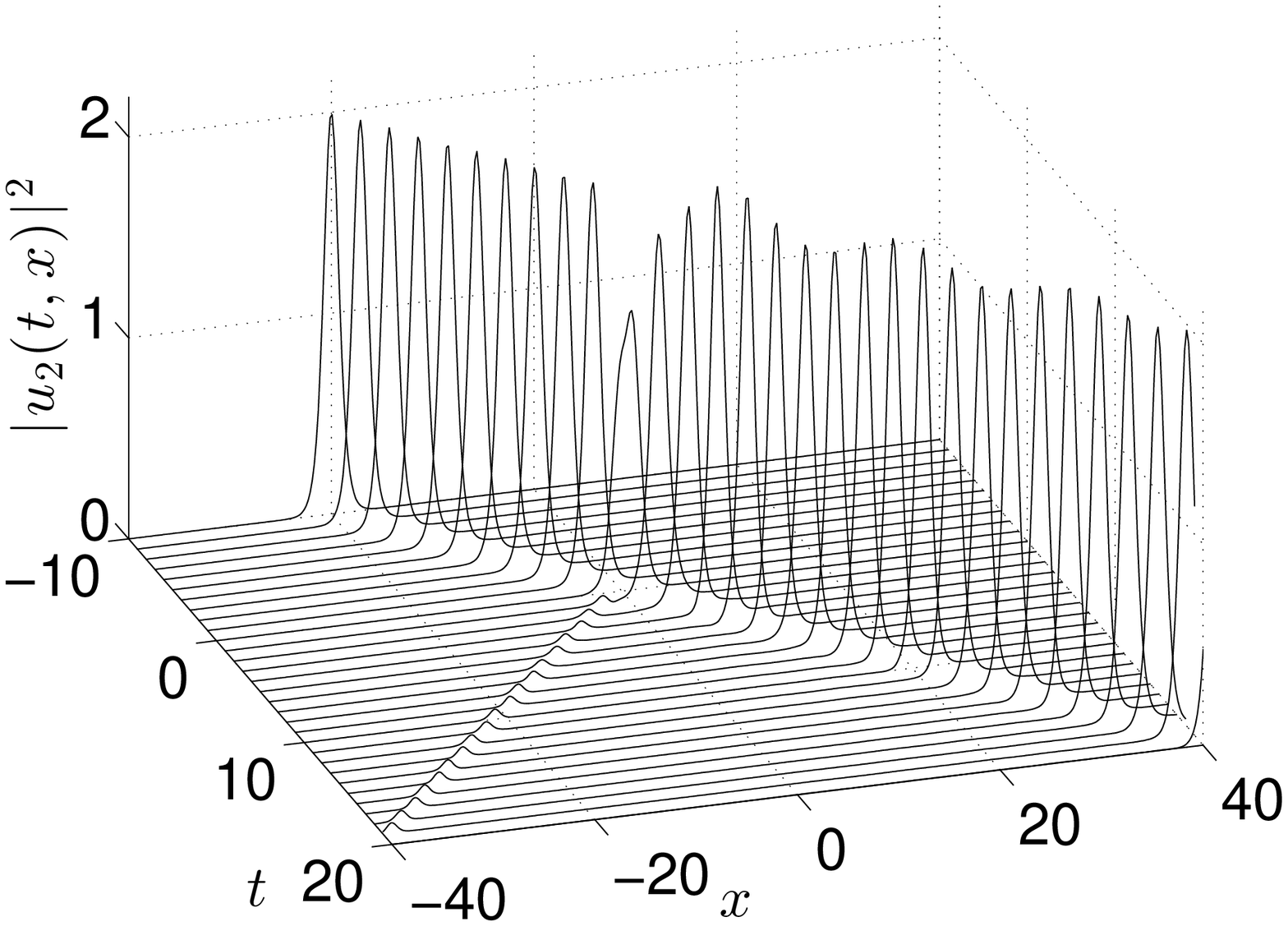}
  \caption{{\bf Symmetric collision:} Position densities $|u_1(t,x)|^2$ and $|u_2(t,x)|^2$ as functions of space and time for $\mu_1=\mu_2=1$ and $\beta=3$}
  \label{fig:ex2a}
\end{figure}

Here the interaction parameter $\beta$ is greater than in the first experiment and after the interaction at time zero we see in each component a small part going with the other component, respectively (Figure~\ref{fig:ex2a}).

At the final time $t=40$ we look at the left side part of each
component $u_j^-=u_j \cdot \chi_{[-\infty,0]}$, and compute the
$L^2$-norms 
\[
a_j^2=\int_{\R} |u^-_j(40,x)|^2dx.
\]
Note that the mass of each component  $M_j(t)=\frac12 \int_{\R}
|u_j(t,x)|^2dx$ is conserved, $M_j(0)=M_j(t)$ for $j=1,2$ and  for all $t\in[-10,40]$.
At $t=40$ we obtain $(a_1=\sqrt{3.893},a_2=\sqrt{0.069})$. 

\begin{figure}[!htbp]
  a)\includegraphics[width=59mm]{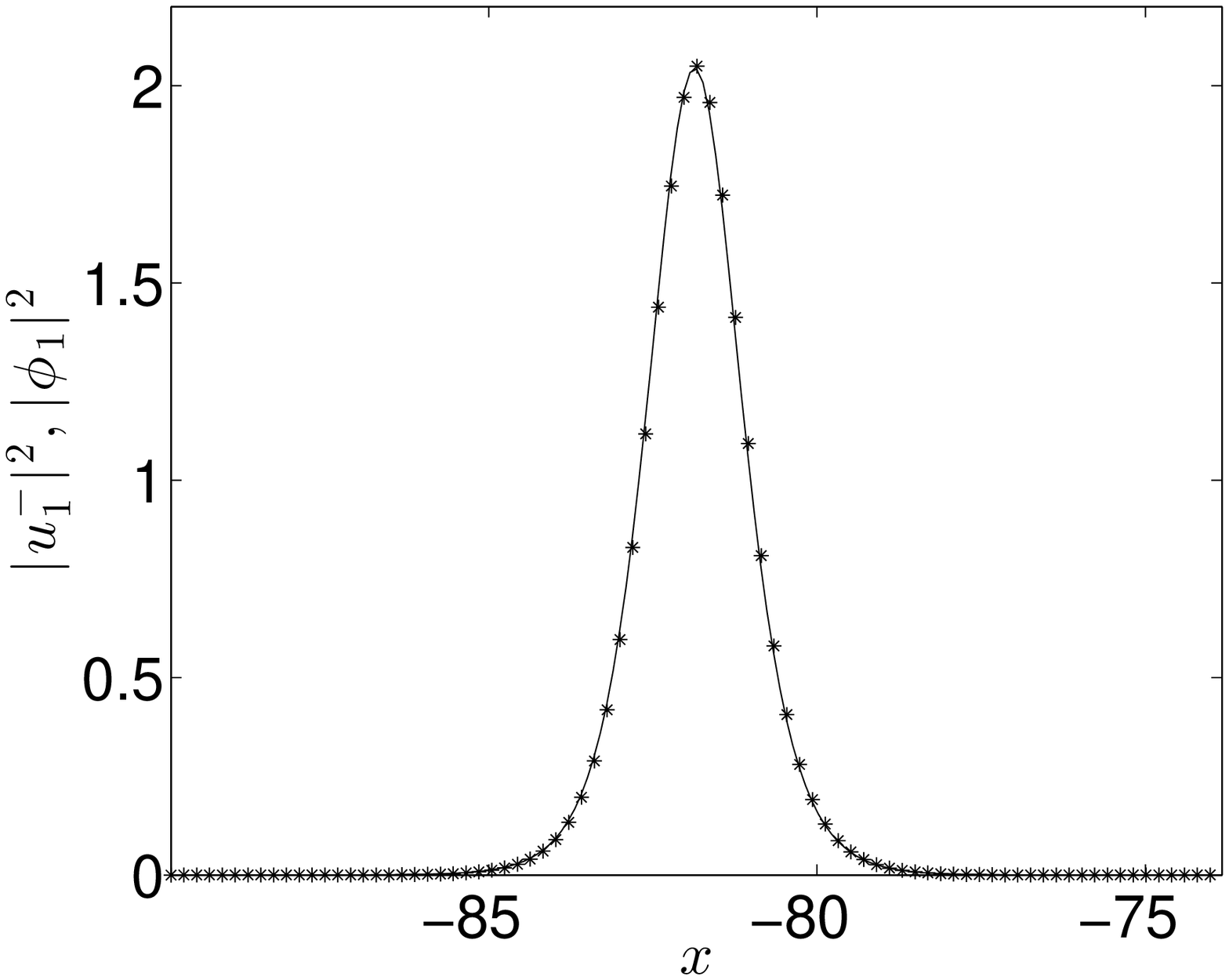}
  b)\includegraphics[width=59mm]{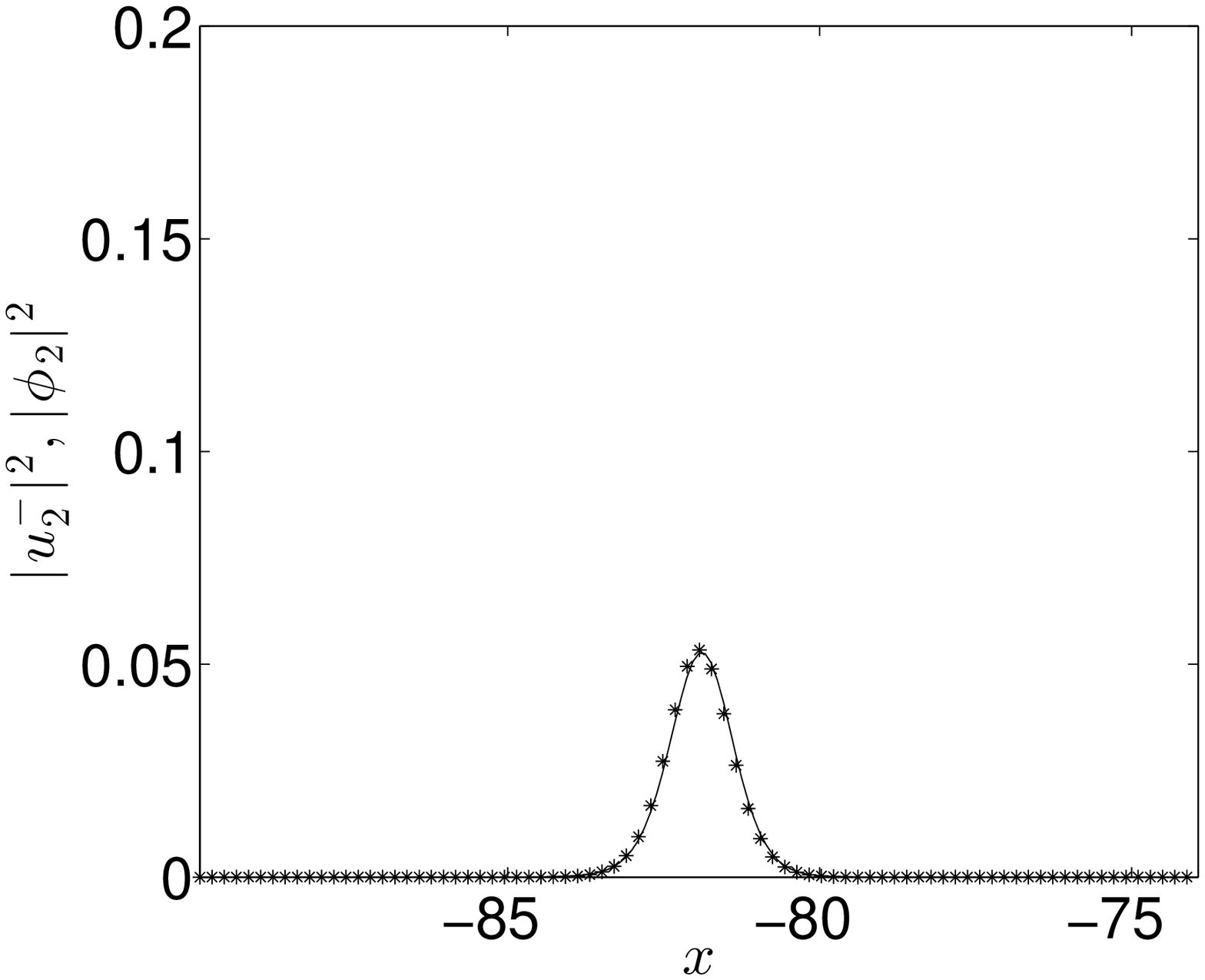}
  \caption{{\bf Symmetric collision:} a) Plots of $|u_1^-(40,x)|^2$ ('-' line) and $|\phi_1(x+x_1)|^2$ ('.' line)  b) Plots of $|u_2^-(40,x)|^2$ and $|\phi_2(x+x_2)|^2$, 
    where $x_j$ is the $x$ corresponding to the $\max{|u_j|^2}$, with $j=1,2$.}
  \label{fig:ex2b}
\end{figure}

Finally we compute the ground state solution $(\phi_1(x),\phi_2(x))$
of the elliptic system~\eqref{eq:ellsys} by the gradient flow with
constraints $(3.893,0.069)$ and compare it to
$(u_1^-(40,x),u_2^-(40,x))$. In Figure~\ref{fig:ex2b} we first shift
$\phi_1(x+x_1)$ (with $x_1$ corresponding to $\max{|u_1|^2}$) and
$\phi_2(x+x_2)$  (with $x_2$ corresponding to $\max{|u_2|^2}$) and
then compare the position density pointwise to the position density of
the final solution located on the left of the origin
$(u_1^-(40,x),u_2^-(40,x))$ and realize that they fit very well.

In this case, the interaction result into a new repartition of the mass and
of the energy so as to approximate a ground state profile. 

\subsection{Dispersive inelastic interaction}
In this experiment we observe a loss of energy, mass and momentum in a small dispersive part located at the interaction place and a small dispersive part moving to the boundaries. Here we have solved the system~\eqref{eq:nls}  with $\beta=-1$  and $\mu_1=\mu_2=1$
in the interval $(-500,500)$ with $8192$ grid points for the spatial discretization, with periodic boundary conditions, initial time $t_0=-10$ and initial condition
\eqref{eq:initsol} with the following parameters $v_1=-v_2=2.7$, $\omega_1=\omega_2=1$ , and $x_1=x_2=0$. 
\begin{figure}[!htbp]
  \includegraphics[width=59mm]{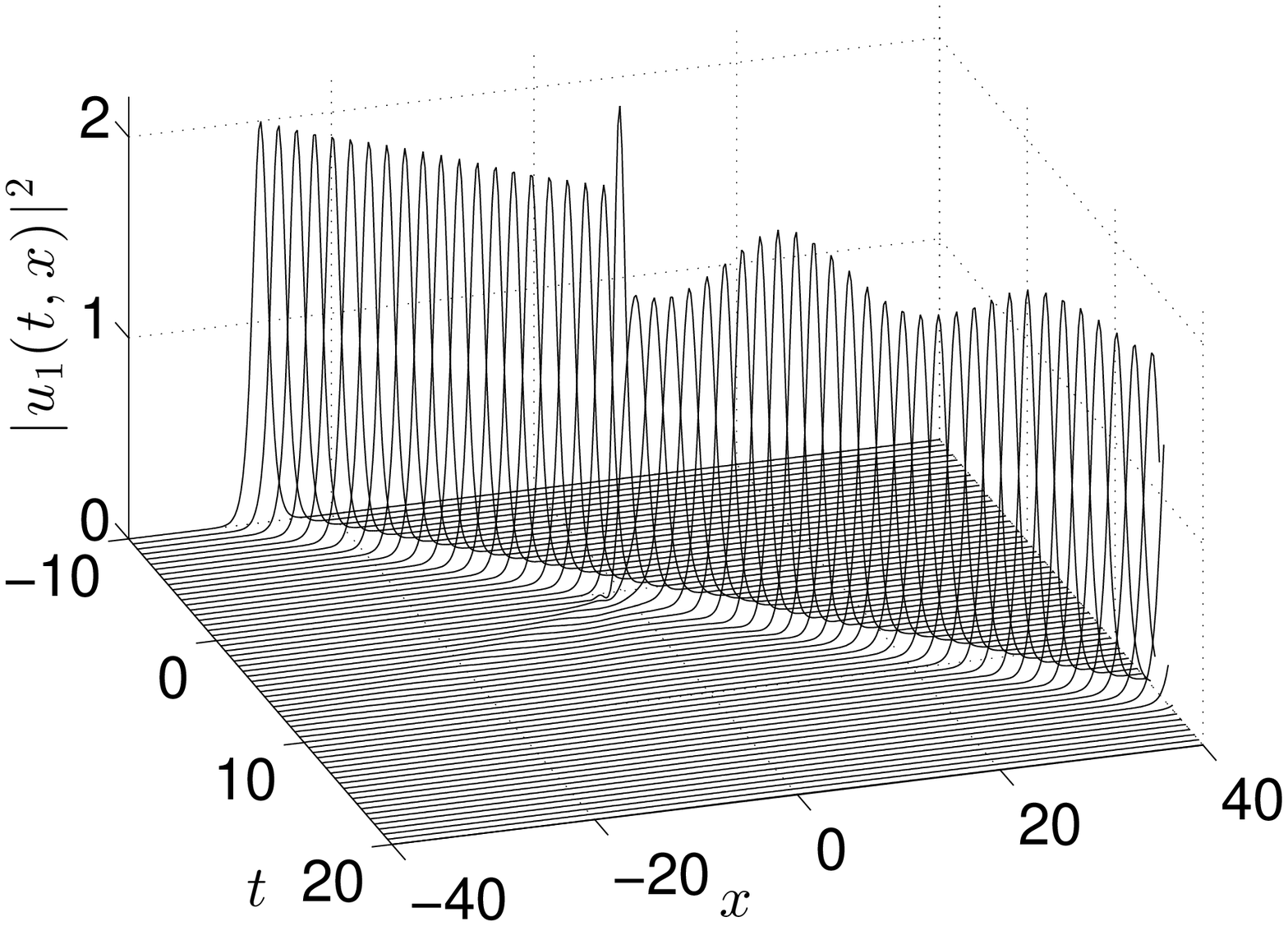}
  \includegraphics[width=59mm]{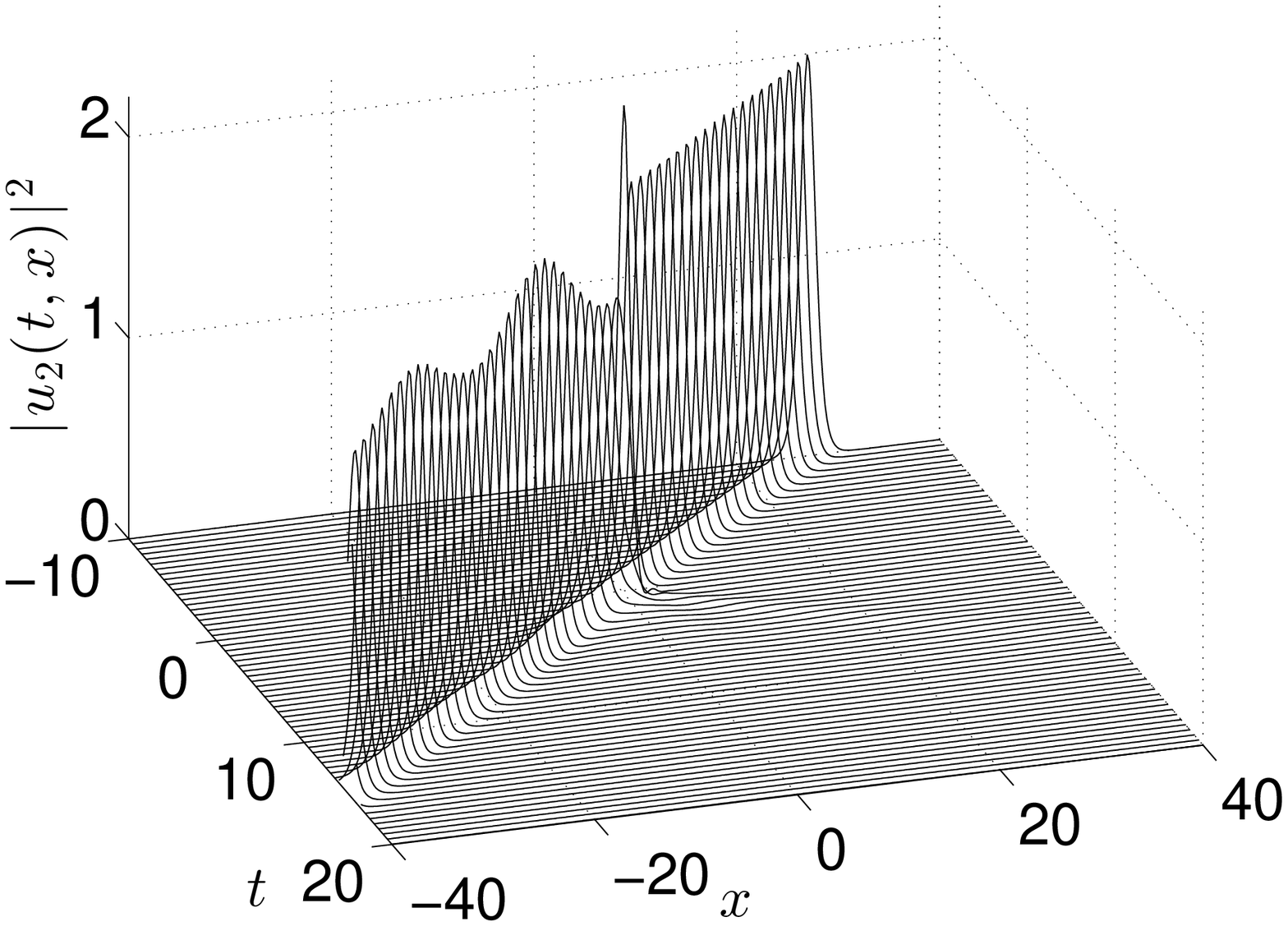}
  \caption{{\bf Dispersive inelastic interaction:} Plots of $|u_1(t,x)|^2$  (left) and $|u_2(t,x)|^2$ (right) as functions of space and time}
  \label{fig:ex3a}
\end{figure}

\begin{figure}[!htbp]
  \includegraphics[width=59mm]{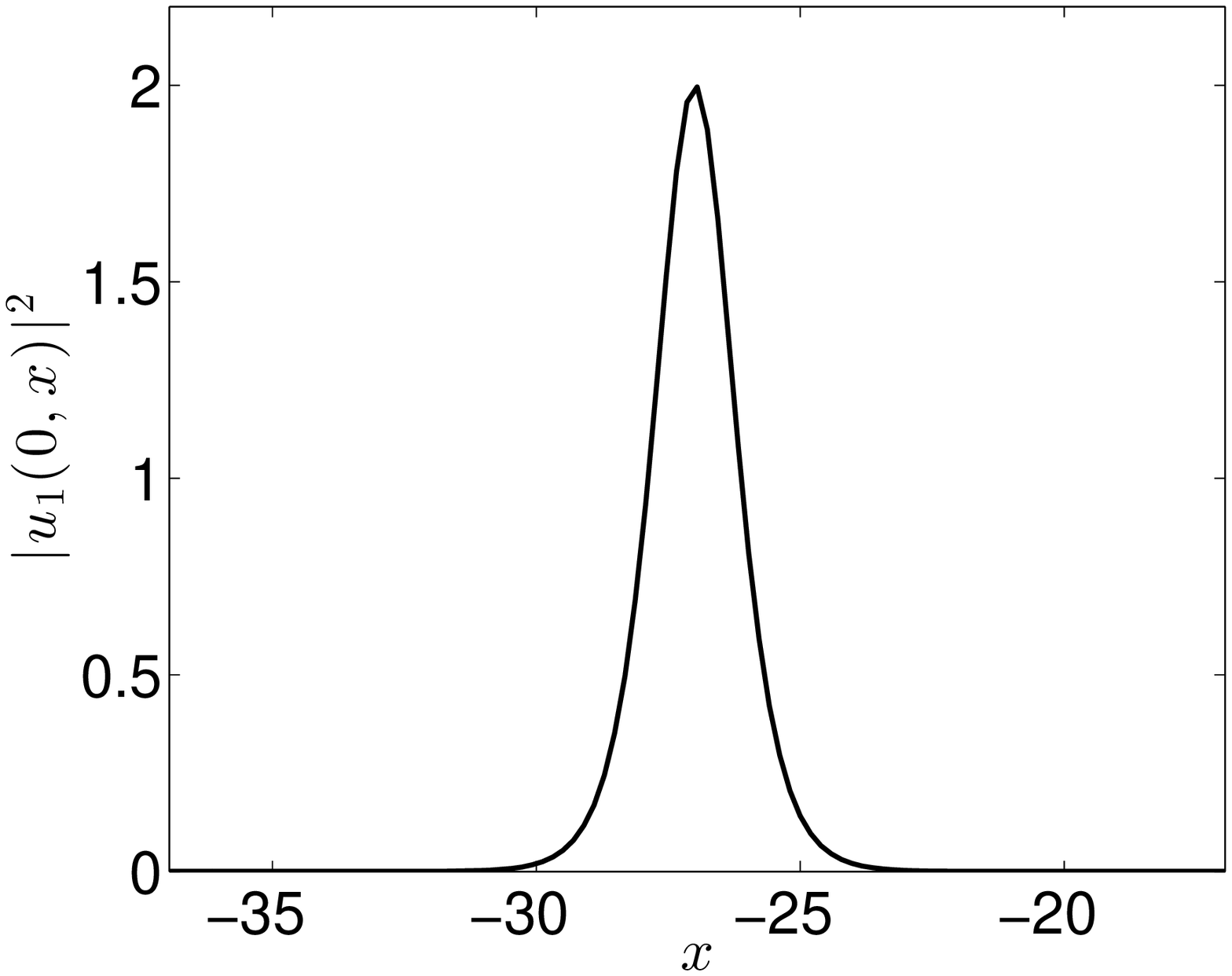}
  \includegraphics[width=59mm]{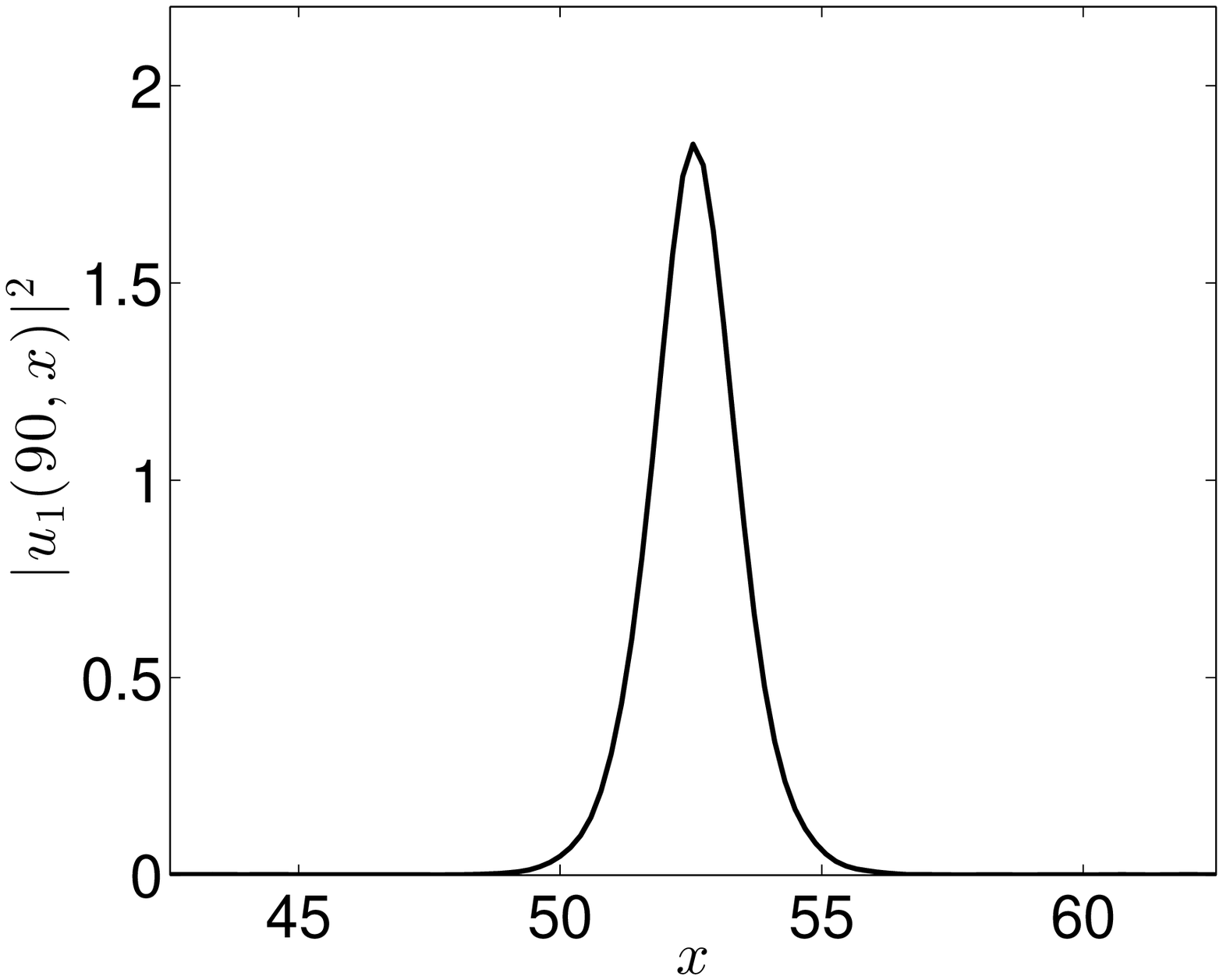}
  \caption{{\bf Dispersive inelastic interaction:} Plots of $|u_1(0,x)|^2$  (left) and $|u_1(90,x)|^2$ (right)}
  \label{fig:ex3b}
\end{figure}

Figure~\ref{fig:ex3a} shows the evolution of the waves.
 In Figure~\ref{fig:ex3b} we see the position density of the initial soliton $u_1(0,x)$ and the one of the final soliton at 
 $t=90$, with lost mass and energy.

\subsection{Reflexion}
In this experiment we observe a change of the sign of the speeds, thus the solitons are reflected after interaction. Here we have the same parameter as in the case of a dispersive interaction, $\beta=-1$  and $\mu_1=\mu_2=1$, but the initial velocity is smaller, precisely we have the initial condition
\eqref{eq:initsol} with the following parameters $v_1=-v_2=0.5$, $\omega_1=\omega_2=1$, and $x_1=x_2=0$. Since the velocity is not so large, and we observe the soliton until $t_{final}=10$, we solve~\eqref{eq:nls} on  $(-20,20)$ with $1024$ spatial grid points, periodic boundary conditions and time step equal to $\tau=10^{-3}$.

\begin{figure}[!htbp]
  \includegraphics[width=59mm]{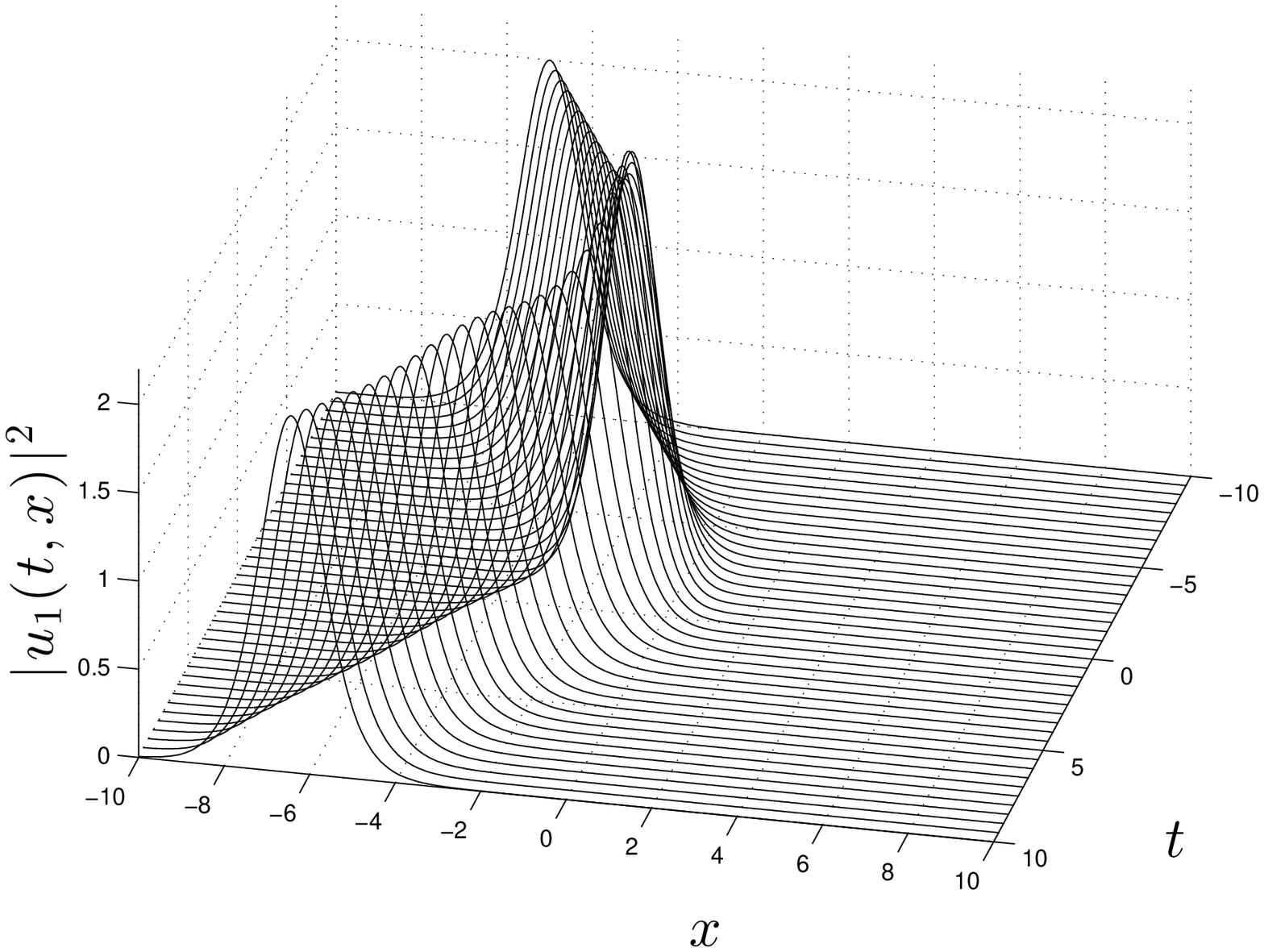}
  \includegraphics[width=59mm]{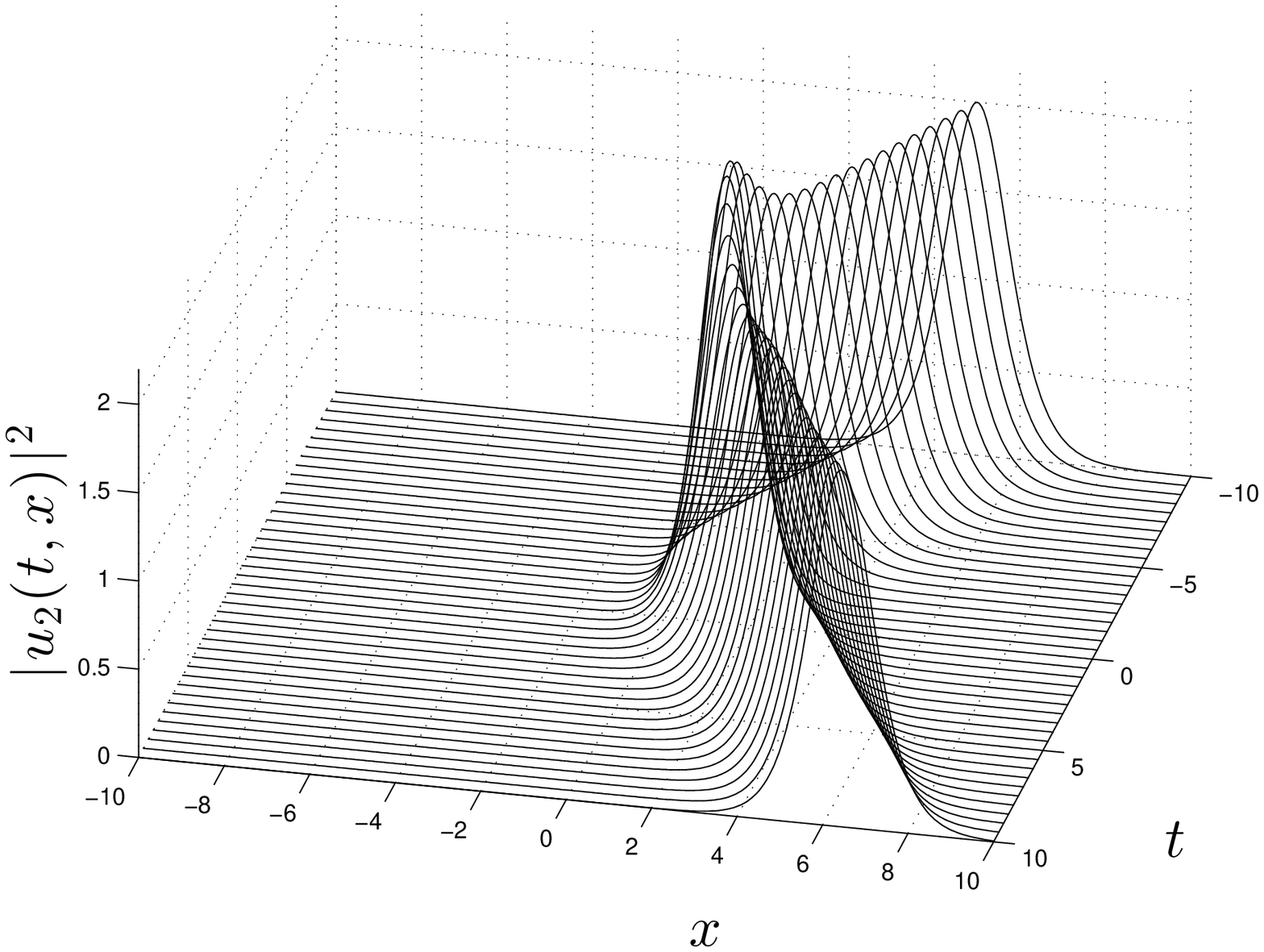}
  \caption{{\bf Reflexion:}  Plots of $|u_1(t,x)|^2$  (left) and $|u_2(t,x)|^2$ (right) as functions of space and time}
  \label{fig:ex4a}
\end{figure}

In Figure~\ref{fig:ex4a}  we see the reflexion of the two solitons after interaction, thus only the velocities changed sign. In Figure~\ref{fig:ex4b} we compare point wise the solution at the final time to the initial solution, and observe that it remains unchanged.
\begin{figure}[!htbp]
  \includegraphics[width=59mm]{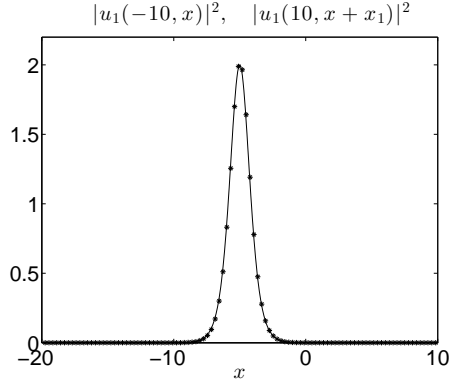}
  \caption{{\bf Reflexion:}  Plots of $|u_1(-10,x-x_1)|^2$  ('$-$' line) and $|u_1(10,x)|^2$ ('.' line), where $x_1$ is the translation such that we can compare the two solitons}
  \label{fig:ex4b}
\end{figure}

These were various examples of the possible outcomes of multi-speeds solitary waves interactions. The above described situations may not be the only possibles and we do not claim comprehensiveness. 
\FloatBarrier

\appendix

\section{Proof of the modulation lemma}
\label{appendix}

\begin{proof}[Proof of Lemma    \ref{lem:modulation}] The proof of the modulation lemma is inspired from~\cite{MaMe06} and relies on the implicit function theorem. 

  Let us define $F: \hu\times \R_{+}\times \R\times \R \longrightarrow \R^{3}$ by 
  \begin{equation*}
    F(u,\om,\gamma,y)=\begin{pmatrix}
      \psld{u-R(\om,\gamma,y)}{R(\om,\gamma,y)}\\
      \psld{u-R(\om,\gamma,y)}{iR(\om,\gamma,y)}\\
      \psld{u-R(\om,\gamma,y)}{\partial_{x} R(\om,\gamma,y)}
    \end{pmatrix}
  \end{equation*}
  where, for the sake of simplicity, we removed the $i$ indexes and $R(\om,\gamma,y)$ denotes the soliton 
  $R(\om,\gamma,y)=e^{i(\frac{1}{2}v\cdot x +\gamma)}\frac{1}{\sqrt{\mu}}Q_{\om}(x-y)$. Note that, 
  \begin{equation*}
    \forall (\om, \gamma,y)\in \R_{+}\times \R\times \R,\quad F(R(\om,\gamma,y),\om,\gamma,y)=0.
  \end{equation*}
  The key idea here is to apply the implicit function theorem to function $F$ at point $(R(\om,\gamma,y),\om,\gamma,y)$ for fixed $(\om, \gamma,y)\in \R_{+}\times \R\times \R$. Simple computations due to properties of $Q_{\om}$, mainly the fact that $Q_{\om}$ is radial and even,  lead to
  $\nabla F(R(\om, \gamma,y),\om, \gamma,y)$  diagonal with diagonal terms $a, b, c$ given by:
  \begin{align*}
    a&=\frac{1}{\mu}\Re \int_{\R}\partial_{\om}Q_{\om}(x-y)\overline{Q_{\om}(x-y)}dx<0\\
    b&=  -\psld{\partial_{\gamma}R(\om,\gamma,y)}{iR(\om,\gamma,y)}=\frac{-1}{\mu}\Re \int_{\R} Q_{\om}(x-y)^2dx<0\\
    c &= \frac{-1}{\mu}\norm{\partial_{k}Q_{\om}}_{L^2}^2<0.
  \end{align*}
  Implicit function theorem finally gives the existence of parameters $\tildo_{j}, \tildg_{j}, \tildx_{j}$ as functions of time. To prove that these functions are actually of class $\calC^1$, a standard regularization argument is needed. We refer to~\cite{MaMe01} for more details on that argument. 

  Now, in order to be more readable, we prove estimate~\eqref{eq:estmod} for $j=1$. In that purpose, let us write the equation of evolution satisfied  by  $\varepsilon_{1}$, namely: 
  \begin{multline}
    \label{eq:eps}
    i\partial_{t}\eps_{1}+L(\eps_{1},\eps_{2})+\calN(\eps_{1},\eps_{2})\\=-(\tildo_{1}-\frac{v_{1}^2}{4}-\partial_{t}\tildg_{1})\tildR_{1}-i\partial_{t}\tildo_{1}\partial_{\om}Q_{\tildo_{1}}e^{i(\frac{1}{2}v_{1}\cdot x+\tildg_{1})}-i(v_{1}-\partial_{t}\tildx_{1})\partial_{x} Q_{\tildo_{1}}.
  \end{multline}
  where 
  \begin{equation*}
    L(\eps_{1},\eps_{2})=\partial_{xx} \eps_{1}+\left(\mu_{1}|\tildR_{1}|^2+\beta |\tildR_{2}|^2\right)\eps_{1}+2\mu_{1}\Re(\tildR_{1}\bar{\eps_{1}})\tildR_{1}+2\beta\Re(\tildR_{2}\bar{\eps_{2}})\tildR_{1}
  \end{equation*}
  and 
  \begin{multline*}
    \calN(\eps_{1},\eps_{2})=\mu_{1}|\eps_{1}|^2\eps_{1}+\beta|\eps_{2}|^2\eps_{1}+\left(\mu_{1}|\eps_{1}|^2+\beta |\eps_{2}|^2\right)\tildR_{1}\\+\left(2\mu_{1}\Re(\tildR_{1}\eps_{1})+2\beta\Re(\tildR_{2}\bar{\eps_{2}})\right)\eps_{1}.
  \end{multline*}

  It is to be noticed that the modulation terms $\tildo_{1}-\frac{v_{1}^2}{4}-\partial_{t}\tildg_{1}$, $\partial_{t}\tildo_{1}$, and $v_{1}-\partial_{t}\tildx_{1}$ appear in the right hand side of this evolution equation. The main idea of the proof is to take in~\eqref{eq:eps} the scalar products of both sides of the equation with respectively $\tildR_{1}, i\tildR_{1}$ and $\partial_{x} \tildR_{1}$. 
  We then use  the orthogonality conditions~\eqref{eq:orth} as well as properties on $Q_{\tildo_{1}}$. In the left hand side, we transfer the derivatives acting on $\eps_{1}$ on the other side of the scalar product thanks to the modulation conditions~\eqref{eq:orth} (for the time derivatives) and integrations by parts (for the space derivatives). We finally use the equation at hand on $\tildR_{1}$: 
  \begin{multline}
    \label{eq:tildR1}
    i\partial_{t}\tildR_{1}+\partial_{xx} \tildR_{1}+\mu_{1}|\tildR_{1}|^2\tildR_{1}=\\\left(\tildo_{1}+\frac{v_{1}^2}{4}-\partial_{t}\tildg_{1}\right)\tildR_{1}+\frac{j}{\sqrt{\mu_{1}}}e^{i(\frac{1}{2}v_{1}\cdot x+\tildg_{1})}\partial_{t}\tildo_{1}\partial_{\om}Q_{\tildo_{1}}+i\frac{v_{1}-\partial_{t}\tildx_{1}}{\mu_{1}}e^{i(\frac{1}{2}v_{1}\cdot x+\tildg_{1})}\partial_{x} Q_{\tildo_{1}}.
  \end{multline} 

  For more simplicity, we only develop the computations for the scalar product with $\tildR_{1}$, the other cases are obtained using the same arguments. 

  Taking the scalar product with $\tildR_{1}$ in equation~\eqref{eq:eps} leads to:
  \begin{multline}\label{eq:spR1}
    \psld{i\partial_{t}\eps_{1}+L(\eps_{1},\eps_{2})+\calN(\eps_{1},\eps_{2})}{\tildR_{1}}=-\left(\tildo_{1}-\frac{v_{1}^2}{4}-\partial_{t}\tildg_{1}\right)\norm{\tildR_{1}}^2\\-\partial_{t}\tildo_{1}\Re \int_{\R} i e^{i(\frac{1}{2}v_{1}\cdot x+\tildg_{1})} \partial_{\om}Q_{\tildo_{1}}\overline{\tildR_{1}}dx\\-(v_{1}-\partial_{t}\tildx_{1})\Re \int_{\R} ie^{i(\frac{1}{2}v_{1}\cdot x+\tildg_{1})}\partial_{x} Q_{\tildo_{1}} \overline{\tildR_{1}}dx
  \end{multline}
  \emph{Right hand side of~\eqref{eq:spR1}:} First, using equation~\eqref{eq=Rtilde} leads to:
  \begin{equation*}
    \Re \int_{\R} i e^{i(\frac{1}{2}v_{1}\cdot x+\tildg_{1})} \partial_{\om}Q_{\tildo_{1}}\overline{\tildR_{1}}dx= \frac{1}{\sqrt{\mu_{1}}}\Im \int_{\R} \partial_{\om}Q_{\tildo_{1}}(x)Q_{\tildo_{1}}(x-\tildx_{1})dx=0
  \end{equation*}
  and 
  \begin{equation*}
    \Re \int_{\R} ie^{i(\frac{1}{2}v_{1}\cdot x+\tildg_{1})}\partial_{x} Q_{\tildo_{1}} \overline{\tildR_{1}}dx =\frac{1}{\sqrt{\mu_{1}}}\Im \int_{\R} \partial_{x} Q_{\tildo_{1}}(x)Q_{\tildo_{1}}(x-\tildx_{1}) dx=0.
  \end{equation*}  
  Thus,~\eqref{eq:spR1} reduces to:
  \begin{equation}
    \label{new44}
    \psld{i\partial_{t}\eps_{1}+L_{1}(\eps_{1},\eps_{2})+\calN(\eps_{1},\eps_{2})}{\tildR_{1}}=-\left(\tildo_{1}-\frac{v_{1}^2}{4}-\partial_{t}\tildg_{1}\right)\norm{\tildR_{1}}^2.
  \end{equation}

  \noindent
  \emph{Left hand side of~\eqref{new44}:}\\
  First, deriving modulation condition $\psld{\eps_{1}}{\tildR_{1}}=0$ with respect to time gives: $\psld{i\partial_{t}\eps_{1}}{\tildR_{1}}=\psld{\eps_{1}}{i\partial_{t}\tildR_{1}}$.

  \noindent
  Let us  now develop $\psld{L(\eps_{1},\eps_{2})}{\tildR_{1}}$: 
  \[
  \psld{L(\eps_{1},\eps_{2})}{\tildR_{1}}=\psld{\eps_{1}}{\partial_{xx} \tildR_{1}+3\mu_{1}|\tildR_{1}|^2\tildR_{1}+\beta|\tildR_{2}|^2\tildR_{1}}+2\beta\psld{\eps_{2}}{|\tildR_{1}|^2\tildR_{2}}.
  \]
  \noindent
  Finally, $\psld{\calN(\eps_{1},\eps_{2})}{\tildR_{1}}$ read:
  \begin{multline*}
    \psld{\calN(\eps_{1},\eps_{2})}{\tildR_{1}}=\psld{\mu_{1}|\eps_{1}|^2\eps_{1}+\beta|\eps_{2}|^2\eps_{1}}{\tildR_{1}}+\psld{(\mu_{1}|\eps_{1}|^2+\beta|\eps_{2}|^2)\tildR_{1}}{\tildR_{1}} \\+\psld{\left(2\mu_{1}\Re(\tildR_{1}\eps_{1})+2\beta\Re (\tildR_{2}\overline{\eps_{2}})\right)\eps_{1}}{\tildR_{1}}
  \end{multline*}

  Equation~\eqref{new44} thus leads to the left hand side term:
  \begin{multline*}
    \psld{\eps_{1}}{i\partial_{t}\tildR_{1}+\partial_{xx}\tildR_{1}+\mu_{1}|\tildR_{1}|^2\tildR_{1}}+\beta\psld{\eps_{1}}{|\tildR_{2}|^2\tildR_{1}}+2\mu_{1}\psld{\eps_{1}}{|\tildR_{1}|^2\tildR_{1}}\\+2\beta\psld{\eps_{2}}{|\tildR_{1}|^2\tildR_{2}}+\psld{\calN(\eps_{1},\eps_{2})}{\tildR_{1}}.
  \end{multline*}

  Now, using equation~\eqref{eq:tildR1} satisfied by $\tildR_{1}$ gives:
  \begin{multline*}
    \psld{\eps_{1}}{i\partial_{t}\tildR_{1}+\partial_{xx}\tildR_{1}+\mu_{1}|\tildR_{1}|^2\tildR_{1}}\\=(\tildo_{1}-\frac{v_{1}^2}{4}-\partial_{t}\tildg_{1})\psld{\eps_{1}}{\tildR_{1}}+\partial_{t}\tildo_{1}\psld{\eps_{1}}{\frac{i}{\sqrt{\mu_{1}}}e^{i(\frac{1}{2}v_{1}\cdot x+\tildg_{1})}\partial_{\om}Q_{\tildo_{1}}}\\+(v_{1}-\partial_{t}\tildx_{1})\psld{\eps_{1}}{\frac{i}{\sqrt{\mu_{1}}}e^{i(\frac{1}{2}v_{1}\cdot x+\tildg_{1})}\partial_{x} Q_{\tildo_{1}}}.
  \end{multline*}

  Finally, modulation condition $\psld{\eps_{1}}{\tildR_{1}}=0$ leads to
  \begin{multline}
    \label{eq:spR1fin}
    (\tildo_{1}-\frac{v_{1}^2}{4}-\partial_{t}\tildg_{1})\norm{\tildR_{1}}^2+\partial_{t}\tildo_{1}\psld{\eps_{1}}{\frac{i}{\sqrt{\mu_{1}}}e^{i(\frac{1}{2}v_{1}\cdot x+\tildg_{1})}\partial_{\om} Q_{\tildo_{1}}}\\+(v_{1}-\partial_{t}\tildx_{1})\psld{\eps_{1}}{{\frac{i}{\sqrt{\mu_{1}}}e^{i(\frac{1}{2}v_{1}\cdot x+\tildg_{1})}\partial_{x} Q_{\tildo_{1}}}}
    \\=-2\mu_{1}\psld{\eps_{1}}{|\tildR_{1}|^2\tildR_{1}}-\beta\psld{\eps_{1}}{|\tildR_{2}|^2\tildR_{1}}-2\beta \psld{\eps_{2}}{|\tildR_{1}|^2\tildR_{2}} -\psld{\calN(\eps_{1},\eps_{2})}{\tildR_{1}}.
  \end{multline}
  Thanks to Lemma~\ref{localisation-estimates}, it is readily seen that most terms in this equation are of order $\calO (\|\eps\|_{2})$. Finally,~\eqref{eq:spR1fin} can be re-written in a simpler way:
  \begin{equation*}
    \left(\norm{\tildR_{1}}^2+a_{1}(t)\right)(\tildo_{1}-\frac{v_{1}^2}{4}-\partial_{t}\tildg_{1})+a_{2}(t)\partial_{t}\tildo_{1}+a_{3}(t)(v_{1}-\partial_{t}\tildx_{1})=b_{1}(t)
  \end{equation*}
  where, for all $t\in [t_{0},T^n],\ \abs{a_{1}(t)}+\abs{a_{2}(t)}+\abs{a_{3}(t)}+\abs{b_{1}(t)}\leq C\norm{\eps(t)}_{2}$.

  With the same kind of arguments, taking in~\eqref{eq:eps} scalar product with $i\tildR_{1}$ and $i\partial_{x} \tildR_{1}$ respectively, we get two other equations that can be re-written as a linear system solved by the "modulation vector" 
  $\textrm{Mod}(t)=\begin{pmatrix}
    \tildo_{1}(t)-\frac{v_{1}^2}{4}-\partial_{t}\tildg_{1}(t)\\
    \partial_{t}\tildo_{1}(t)\\
    v_{1}-\partial_{t}\tildx_{1}(t)..
  \end{pmatrix}
  $ \\
  This linear system takes the following form: 
  \begin{equation*}
    \label{eq:modulationsyst}
    (\Gamma(t)+A(t))\textrm{Mod}(t)=B(t)
  \end{equation*}  
  where 
  \begin{equation*}
    \Gamma(t)=\left(\begin{array}{ccc} \norm{\tildR_{1}} & 0 & 0\\
 0 &
 \int_{\R} \partial_{\om}Q_{\tildo_{1}}(x)Q_{\tildo_{1}}(x-\tildx_{1}(t))dx
 &{0 }
 \\ 
\frac{v_{1}}{2}\norm{\tildR_{1}} & 0 & -\norm{\partial_{x}Q_{\tildo_{1}}}_{2}^2
      \end{array}\right)
  \end{equation*}
  and, with the help of Lemma~\ref{localisation-estimates}, for all $t\in [t_{0},T^n],\
  \|A(t)\|,\|B(t)\|\leq C \|\eps\|_{2}$, and hence~\eqref{eq:estmod}.
\end{proof}



\bibliographystyle{abbrv}
\bibliography{biblio}

\end{document}